\newtheorem{thm}{Theorem}
\newtheorem*{Cor}{Corollary}
\newtheorem{lem}{Lemma}
\newtheorem*{Lem}{Lemma}
\theoremstyle{definition}
\def\F{{\mathcal F}}
\def\S{{\mathcal S}}
\def\HH{{\mathbb H}}
\def\real{{\mathbb R}}
\def\Bone{{\rm 1~\hskip-1.4ex l} }
\begin{document}
\title[Functionals for Multilinear Fractional Embedding]
{Functionals for Multilinear Fractional Embedding}
\author{William Beckner}
\address{Department of Mathematics, The University of Texas at Austin,
1 University Station C1200, Austin TX 78712-0257 USA}
\email{beckner@math.utexas.edu}
\begin{abstract}
A novel representation is developed as a measure for multilinear fractional embedding. 
Corresponding extensions are given for the Bourgain-Brezis-Mironescu theorem 
and Pitt's inequality.
New results are obtained for diagonal trace restriction on submanifolds as an application 
of the Hardy-Littlewood-Sobolev inequality.
Smoothing estimates are used to provide new structural understanding for
density functional theory, the Coulomb interaction energy and
quantum mechanics of phase space. 
Intriguing connections are drawn that illustrate interplay among classical inequalities  in Fourier analysis.
\end{abstract}
\maketitle

\section{Multilinear embedding} 

A problem of central interest for embedding is how to characterize the action of multilinear 
fractional smoothing: that is, control by the operator 
\begin{equation}\label{eq:control}
\begin{split}
(\Lambda_\alpha f) (x_1,\ldots, x_m) 
& = \bigg[ \prod_{k=1}^m (-\Delta_k/4\pi^2)^{\alpha_k/2} f\bigg] (x_1,\ldots, x_m) \\
& = \F^{-1} \bigg[ \prod_k |\xi_k|^{\alpha_k}\widehat f (\xi_1,\ldots, \xi_m)\bigg]
(x_1,\ldots, x_m)
\end{split}
\end{equation}
where $f\in \S (\real^{mn})$, $x_k \in \real^n$, $\alpha = (\alpha_1,\ldots, \alpha_m)$, 
$|\alpha| = \sum \alpha_k$, $0< \alpha_k < n$, $\Delta_k$ is the standard Laplacian on 
$\real^n$ in the variable $x_k$, and 
$$(\F f) (\xi) = \widehat f ( \xi) = \int e^{2\pi ix\xi} f(x)\, dx\ .$$
Examples of how such control can be utilized are contained in (\cite{Beckner-MRL}, 
\cite{Beckner-2012}, \cite{Beckner-PAMS2013}, \cite{CS}).
Our objective here is to consider a corresponding functional suggested by the 
Aronszajn-Smith formula: 
\begin{equation}\label{eq:AS}
I_{p,\alpha} (f) 
= \int_{\real^{mn} \times \real^{mn}} 
\prod | x_k - y_k |^{-n-p\alpha_k}
\Big| \sum_{w=x,y} (-1)^{\sigma (y)} f(w_1,\ldots, w_m)\Big|^p\, dx\,dy
\end{equation}
where $f\in \S(\real^{mn})$, $\alpha = (\alpha_1,\ldots, \alpha_m)$, $0<\alpha_k < 1$, 
$1< p < n/\alpha_k$ for all $k$, and $\sigma (y)$ counts the number of $y$ values in the 
expression $f(w_1,w_2,\ldots, w_m)$ --- for example, $\sigma (y) =3$ in the case 
$f(x_1,y_2, y_3, x_4, y_5, x_6,\ldots, x_m)$. 
Related to this functional, one can give a non-local representation for multilinear fractional 
smoothness:
\begin{equation}\label{eq:non-local rep}
(\Lambda_\alpha f) (x_1,\ldots, x_m) 
= \prod_k (2/D_{\alpha_k}) \int_{\real^{mn}} \prod |x_k - y_k|^{-n-\alpha_k} 
\bigg[ \sum_{w=x,y} (-1)^{\sigma (y)} f (w_1,\ldots, w_m)\bigg] \,dy
\end{equation} 
Using the classical formula of Aronszajn-Smith and simple iteration: 

\begin{lem}[Multilinear Aronszajn-Smith Formula]
\begin{equation}\label{eq:AS-lem}
I_{2,\alpha} (f) = \Big[ \prod D_{2\alpha_k}\Big] \int_{\real^{mn}} |\Lambda_\alpha f|^2\,dx
\end{equation}
where 
$$D_\beta = \frac4{\beta} \ \pi^{-n/2 + \beta} \ 
\frac{\Gamma (1-\beta/2)}
{\Gamma (\frac{n+\beta}2)}
$$
and $0< \alpha_k < \min \{ 1,\, n/2\}$.
\end{lem}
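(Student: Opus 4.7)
The plan is to reduce to the classical (scalar) Aronszajn--Smith formula
\begin{equation*}
\iint_{\real^n \times \real^n} |x-y|^{-n-2\alpha}\,|u(x)-u(y)|^2\,dx\,dy
= D_{2\alpha} \int_{\real^n} \bigl|(-\Delta/4\pi^2)^{\alpha/2} u\bigr|^2\,dx,
\end{equation*}
valid for $0<\alpha<\min\{1,n/2\}$, and then iterate one variable at a time. The enabling algebraic observation is the identity
\begin{equation*}
\sum_{w=x,y}(-1)^{\sigma(y)} f(w_1,\ldots,w_m)
= \prod_{k=1}^m (E_{x_k} - E_{y_k})\,f,
\end{equation*}
where $E_{x_k}$ (respectively $E_{y_k}$) acts in the $k$-th slot by evaluation at $x_k$ (respectively $y_k$); expanding the product over the $2^m$ choices of signs recovers exactly the alternating sum, with $\sigma(y)$ counting the number of $-E_{y_k}$ factors selected.

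With this factorization in hand I would set
\begin{equation*}
g(x_1) = \prod_{k=2}^m (E_{x_k} - E_{y_k})\,f(x_1,\,\cdot_2,\ldots,\cdot_m),
\end{equation*}
treating $(x_2,\ldots,x_m)$ and $(y_2,\ldots,y_m)$ as fixed parameters, so that the squared bracket in the integrand of $I_{2,\alpha}(f)$ is $|g(x_1)-g(y_1)|^2$. Applying the scalar Aronszajn--Smith formula in the pair $(x_1,y_1)$ and invoking Fubini produces the factor $D_{2\alpha_1}$ together with an integral of the same form as $I_{2,\alpha}$, but with $f$ replaced by $(-\Delta_1/4\pi^2)^{\alpha_1/2}f$ and with the remaining product running over $k\geq 2$. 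The key point is that $(-\Delta_1)^{\alpha_1/2}$ commutes with evaluation in every other coordinate, so the algebraic structure is preserved from one pass to the next.

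Iterating this reduction $m$ times accumulates the constant $\prod_k D_{2\alpha_k}$ and leaves $\int_{\real^{mn}} |\Lambda_\alpha f|^2\,dx$, which yields the claimed identity. The only bookkeeping point is justifying the interchange of orders of integration at each stage, handled by Fubini using $f\in\S(\real^{mn})$ together with the condition $0<\alpha_k<\min\{1,n/2\}$, which guarantees finiteness of every intermediate expression by induction on the scalar case. I do not anticipate any substantial obstacle: once the alternating-sum factorization into one-variable differences is installed, the proof is an orderly iteration of a known identity, exactly in the spirit of the lemma's declared "simple iteration."
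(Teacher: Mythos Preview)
Your proposal is correct and is essentially the same iteration as the paper's proof: both apply the one-variable Aronszajn--Smith identity successively in each coordinate, using Fubini to pass the remaining variables through as parameters. The paper runs the iteration on the Fourier side, converting one $|\xi_k|^{2\alpha_k}$ weight at a time into a difference quotient (with a mild abuse of notation in which ``$f(x_1,\xi_2,\xi')$'' denotes a partial inverse Fourier transform), whereas you run it on the physical side via the factorization $\sum_{w}(-1)^{\sigma(y)}f(w)=\prod_k(E_{x_k}-E_{y_k})f$; these are the same computation read in opposite directions.
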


\begin{proof}
Apply the classical Aronszajn-Smith formula to successive variables:
\begin{equation*}
\begin{split}
&\int |\widehat f (\xi_1,\xi_2,\xi')|^2 |\xi_1|^{2\alpha_1} |\xi_2|^{2\alpha_2} \, d\xi_1\, d\xi_2\\
&  = ( D_{2\alpha_1})^{-1} \int 
\frac{|f(x_1,\xi_2,\xi') - f(y_1,\xi_2,\xi')|^2}{|x_1 - y_1|^{n+2\alpha_1}} 
|\xi_2|^{2\alpha_2} , dx_1 \, dy_1\, d\xi_2\\
&  = (D_{2\alpha_1} D_{2\alpha_2})^{-1} \int 
\frac{|[ f(x_1, x_2,\xi') - f(y_1, x_2,\xi')] - [f(x_1,y_2,\xi') - f(y_1, y_2, \xi')]|^2}
{|x_1 - y_1|^{n+2\alpha_1} |x_2- y_2|^{n+2\alpha_2}}\ 
dx_1\, dx_2\, dy_1\, dy_2
\end{split}
\end{equation*}
where $\xi' = (\xi_3, \ldots, \xi_m)$. 
Continue this process until all the Fourier transform variables $\xi_k$ are exhausted.
\end{proof}

Observe that if $f(x_1,\ldots,x_m) = g(x_1)h (x_2,\ldots, x_m)$ then 
$$\sum_{w=x,y} (-1)^{\sigma (y)} f(w_1,\ldots, w_m) 
= \big[ g(x_1) - g(y_1)\big] 
\sum_{w=x,y} (-1)^{\sigma (y)} h (w_2,\ldots, w_m)$$
For product functions $f(x_1,\ldots, x_m) = \prod f_k (x_k)$
$$I_{p,\alpha} (f) = 
\prod_k \bigg[ \int_{\real^n\times\real^n} |x-y|^{-n-p\alpha_k} 
\big| f_k (x) - f_k (y)\big|^p \,dx\, dy\bigg]$$
This splitting, the utilization of iteration methods, and the product structure, 
suggests that the issue here is not a true multilinear problem, and that product 
functions will likely characterize results. 

\begin{thm}[Multilinear Pitt's Inequality] 
Let $f\in \S(\real^{mn})$, $0< \alpha_k < 1$ and $1\le p< \min \{n/\alpha_k\}$; then 
\begin{equation}\label{eq:MP}
I_{p,\alpha} (f)  \ge \prod_k D_{p,\alpha_k}\int_{\real^{mn}} \prod |x_k|^{-p\alpha_k} 
|f(x)|^p\, dx
\end{equation}
$$D_{p,\beta} = \int_{\real^n} \big| 1-|x|^{-\lambda}\big|^p |x-\eta|^{-n-p\beta}\, dx$$
for $\lambda = (n-p\beta) /p$ and $\eta \in S^{n-1}$.
\end{thm}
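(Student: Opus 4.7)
The plan is to reduce the multilinear inequality to the scalar ($m=1$) Pitt inequality by iterating in the variables, in close analogy with how the preceding Aronszajn-Smith lemma was iterated. The alternating sum $\sum_{w=x,y}(-1)^{\sigma(y)}f$ is the mixed finite difference $\Delta_1\cdots\Delta_m f$, where $\Delta_k g := g(\ldots,x_k,\ldots)-g(\ldots,y_k,\ldots)$; these difference operators commute since they act on distinct coordinates. The product structure for separable $f$ noted in the remark preceding the theorem already forces the right-hand constant to take the form $\prod_k D_{p,\alpha_k}$ and identifies $D_{p,\beta}$ as the natural scalar constant.

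I argue by induction on $m$, with base case the classical scalar Pitt inequality
\[
\iint_{\real^n\times\real^n} \frac{|f(x)-f(y)|^p}{|x-y|^{n+p\beta}}\,dx\,dy \;\ge\; D_{p,\beta}\int_{\real^n}|x|^{-p\beta}|f(x)|^p\,dx.
\]
The constant is forced by the ground state $f_0(x)=|x|^{-\lambda}$ with $\lambda=(n-p\beta)/p$: substituting $y=|x|z$ and using the critical relation $p\lambda+p\beta=n$ shows the formal ratio equals $\int_{\real^n}\bigl|1-|z|^{-\lambda}\bigr|^p\,|z-\eta|^{-n-p\beta}\,dz$, which is independent of $\eta\in S^{n-1}$ by rotational symmetry and is precisely $D_{p,\beta}$. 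The inequality itself is then proved by the ground-state substitution $f(x)=|x|^{-\lambda}u(x)$ combined with a Schur/Hardy-type bound (or by Fourier--spherical-harmonic expansion in the $p=2$ case).

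For the inductive step, freeze $(x_2,y_2,\ldots,x_m,y_m)$ and write
\[
\sum_{w=x,y}(-1)^{\sigma(y)}f(w_1,\ldots,w_m)=F(x_1;\cdot)-F(y_1;\cdot),
\]
where $F(u_1;x_2,y_2,\ldots,x_m,y_m)=\sum_{w_2,\ldots,w_m=x,y}(-1)^{\sigma'(y)}f(u_1,w_2,\ldots,w_m)$ and $\sigma'$ counts $y$-entries in positions $2,\ldots,m$. Applying the scalar Pitt inequality in the inner $(x_1,y_1)$-integration, with all other variables acting as parameters, produces the factor $D_{p,\alpha_1}$ and the weight $|x_1|^{-p\alpha_1}$. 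For each fixed $x_1$ the remaining integrand is exactly $I^{(m-1)}_{p,(\alpha_2,\ldots,\alpha_m)}(\tilde f_{x_1})$ with $\tilde f_{x_1}(x_2,\ldots,x_m):=f(x_1,x_2,\ldots,x_m)$, and the induction hypothesis together with Fubini delivers the stated bound.

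The principal obstacle is really the scalar base case: establishing that the specific constant $D_{p,\beta}$ determined by the ground state is admissible for all $1\le p<n/\beta$. For $p=2$ it is a direct Fourier/Mellin computation; for $p\ne 2$ it requires the ground-state substitution together with an $L^p$-Hardy estimate (or a rearrangement argument that reduces to symmetric-decreasing integrands). Once the scalar case is in hand, the multilinear statement is pure iteration, confirming the remark that this is not a genuinely multilinear problem and that separable functions control the sharp behaviour.
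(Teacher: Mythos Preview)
Your proposal is correct and matches the paper's approach essentially line for line: the paper also freezes the variables $(x_2,y_2,\ldots,x_m,y_m)$, applies the scalar Pitt inequality (cited there as Theorem~4.1 of \cite{Beckner-Forum}) in the $(x_1,y_1)$ integration to extract $D_{p,\alpha_1}|x_1|^{-p\alpha_1}$, and then iterates, with sharpness verified on product functions. The only difference is that the paper treats the scalar base case as a black box from the literature, whereas you sketch its derivation via the ground-state substitution $f(x)=|x|^{-\lambda}u(x)$.
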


\begin{proof}
This result follows from successive application of Theorem~4.1 in \cite{Beckner-Forum} 
(see also Lemma~1 in \cite{Beckner-2012}). 
Observe that 
$$I_{p,\alpha} (f) \ge D_{p,\alpha_1} \int |x_1|^{-p\alpha_1} \prod_{k\ge 2} 
|x_k - y_k|^{-n-p\alpha_k} 
\bigg| \sum_{w=x,y} (-1)^{\sigma (y)} f(x_1,w_2,\ldots, w_m) \bigg|^p\, dx\, dy\ ;$$
continue this argument for the variables $x_k, y_k$ for $k\ge 2$ to obtain the full 
inequality \eqref{eq:MP}.
The constant is sharp as can be seen from the calculation for product functions.
\end{proof}

For $p=2$, a more explicit realization can be given for the constant (see discussion of 
Pitt's inequality in \cite{Beckner-PAMS08}, \cite{Beckner-Forum}):

\begin{Cor}
For $f\in \S (\real^{mn})$, $0< \alpha_k < 1$
\begin{gather} 
I_{2,\alpha} (f)   \ge \prod_k \Big[ D_{2\alpha_k}/C_{2\alpha_k}\Big] 
\int_{\real^{mn}} \prod |x_k|^{-2\alpha_k} |f(x)|^2\,dx\label{eq:Cor}\\
C_\beta = \pi^\beta  \left[ \Gamma \Big( \frac{n-\beta}4\Big) \Big/ \Gamma \Big(\frac{n+\beta}4
\Big)\right]^2 \nonumber
\end{gather}
\end{Cor}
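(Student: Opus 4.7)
The plan is to combine the Multilinear Aronszajn-Smith Formula \eqref{eq:AS-lem} with the sharp scalar Pitt inequality, applied one variable at a time. By the preceding Lemma,
\[
I_{2,\alpha}(f) = \Big[\prod_k D_{2\alpha_k}\Big] \int_{\real^{mn}} |\Lambda_\alpha f|^2\, dx,
\]
so after pulling the factor $\prod_k D_{2\alpha_k}$ to the right-hand side, the Corollary reduces to the weighted $L^2$ bound
\[
\int_{\real^{mn}} |\Lambda_\alpha f|^2\, dx \ge \prod_k C_{2\alpha_k}^{-1} \int_{\real^{mn}} \prod_k |x_k|^{-2\alpha_k}\, |f(x)|^2\, dx.
\]

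For this bound I would invoke the sharp scalar Pitt inequality on $\real^n$,
\[
\int_{\real^n} \bigl|(-\Delta/4\pi^2)^{\beta/2} g(x)\bigr|^2\, dx \ge C_\beta^{-1} \int_{\real^n} |x|^{-\beta} |g(x)|^2\, dx \qquad (0<\beta<n),
\]
with the sharp constant $C_\beta = \pi^\beta\bigl[\Gamma((n-\beta)/4)/\Gamma((n+\beta)/4)\bigr]^2$ established in \cite{Beckner-PAMS08}, \cite{Beckner-Forum}. Since the operators $(-\Delta_k)^{\alpha_k/2}$ acting on distinct variables commute with one another and with each weight $|x_j|^{-2\alpha_j}$ for $j \ne k$, Fubini permits applying the scalar inequality first in the $x_1$ variable (to $f$ after the operators $(-\Delta_k/4\pi^2)^{\alpha_k/2}$ for $k \ge 2$ have been absorbed), then in $x_2$ after inserting the weight $|x_1|^{-2\alpha_1}$, and so on through $x_m$. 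After $m$ iterations the accumulated scalar constants multiply to $\prod_k C_{2\alpha_k}^{-1}$ and no operators remain, giving the displayed bound.

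Sharpness of the constant is inherited from the scalar case: on product functions $f(x) = \prod_k f_k(x_k)$ the multilinear inequality decouples into a product of scalar Pitt inequalities, each of which is sharp. There is no substantive obstacle here --- the only point needing care is the commutation and Fubini bookkeeping across distinct coordinates, and this is routine precisely because the fractional Laplacians and the weights live on disjoint variables.
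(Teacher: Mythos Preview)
Your argument is correct. The paper, however, obtains the Corollary by a different route: it is presented as the $p=2$ specialization of Theorem~1 (Multilinear Pitt's Inequality), whose proof iterates the \emph{difference-quotient} form of Pitt's inequality (Theorem~4.1 of \cite{Beckner-Forum}) variable by variable, and then simply identifies the constant $D_{2,\alpha_k}$ as $D_{2\alpha_k}/C_{2\alpha_k}$ via the references \cite{Beckner-PAMS08}, \cite{Beckner-Forum}. You instead first invoke the Multilinear Aronszajn--Smith formula to pass to $\int |\Lambda_\alpha f|^2$, and then iterate the classical (Fourier-side) scalar Pitt inequality. The two routes are equivalent once one unwinds that, at $p=2$, the difference-quotient Pitt inequality is exactly Aronszajn--Smith composed with classical Pitt; the paper's ordering has the advantage of treating all $p$ uniformly in Theorem~1 and only specializing at the end, while your ordering is more direct for $p=2$ and makes the role of the classical Pitt constant $C_{2\alpha_k}$ transparent from the start. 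One small caveat: your iteration implicitly needs $2\alpha_k<n$ for the scalar Pitt inequality, so in dimension $n=1$ the hypothesis should read $0<\alpha_k<1/2$, matching the restriction already present in the Aronszajn--Smith Lemma.
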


\begin{thm}[Multilinear Bourgain-Brezis-Mironescu]
For $f\in \S(\real^{mn})$, $0<\beta <1$, $1\le p < n/\beta$ and 
$\alpha_k = \beta$ for all $k$; 
\begin{equation}\label{eq:MBBM}
I_{p,\alpha} (f) \ge (c_{n,p})^m 
\bigg( \int_{\real^{mn}} |f|^{q^*}\,dx \bigg)^{p/q^*}\ ,\qquad 
q^* = \frac{pn}{n-p\beta} 
\end{equation}
where $c_{n,p}$ is the optimal embedding constant on $\real^n$.
\end{thm}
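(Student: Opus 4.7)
The strategy is to iterate the classical single-variable Bourgain--Brezis--Mironescu inequality in one of the $m$ arguments of $f$ at a time, exactly as was done in the proof of Multilinear Pitt's Inequality, but with an additional step to push the resulting $L^{q^\ast}$ bound through the next iteration. Observe first that the antisymmetrized difference factorizes as a tensor product of single-variable differences,
\begin{equation*}
\sum_{w=x,y} (-1)^{\sigma(y)} f(w_1,\ldots,w_m)
= \prod_{k=1}^m \bigl( I_{x_k} - I_{y_k}\bigr) f(x_1,\ldots,x_m),
\end{equation*}
so each variable can be peeled off separately. The classical result that will be applied repeatedly is the scalar inequality
$$\int_{\real^n\times\real^n}\frac{|g(u)-g(v)|^p}{|u-v|^{n+p\beta}}\,du\,dv
\ \ge\ c_{n,p}\,\|g\|_{L^{q^\ast}(\real^n)}^p,\qquad g\in \S(\real^n).$$

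First I would carry out one application to the $x_1,y_1$ variables. Fixing $(x_2,\ldots,x_m,y_2,\ldots,y_m)$ and setting
$$F(z;x_2,y_2,\ldots,x_m,y_m)
= \bigl(I_{x_2}-I_{y_2}\bigr)\cdots \bigl(I_{x_m}-I_{y_m}\bigr) f(z,x_2,\ldots,x_m),$$
the scalar BBM inequality applied in the variable $z$ gives
\begin{equation*}
I_{p,\alpha}(f)\ \ge\ c_{n,p}\int \prod_{k\ge 2} |x_k-y_k|^{-n-p\beta}
\bigl\|F(\,\cdot\,;x_2,y_2,\ldots)\bigr\|_{L^{q^\ast}(dz)}^{p}\,dx_2\,dy_2\cdots dx_m\,dy_m.
\end{equation*}
To set up the next iteration I need the following key lemma: for any nice $h\colon\real^{jn}\times\real^n\to\real$,
\begin{equation*}
\int\!\!\int \frac{\|h(\,\cdot\,,u)-h(\,\cdot\,,v)\|_{L^{q^\ast}(dZ)}^{p}}{|u-v|^{n+p\beta}}\,du\,dv
\ \ge\ c_{n,p}\,\|h\|_{L^{q^\ast}(\real^{(j+1)n})}^{p}.
\end{equation*}
This is proved by setting $\widetilde g(u)=\|h(\,\cdot\,,u)\|_{L^{q^\ast}(dZ)}$, invoking the reverse triangle inequality $|\widetilde g(u)-\widetilde g(v)|\le \|h(\,\cdot\,,u)-h(\,\cdot\,,v)\|_{L^{q^\ast}}$, applying the scalar BBM inequality to $\widetilde g$, and then identifying $\|\widetilde g\|_{L^{q^\ast}(du)}=\|h\|_{L^{q^\ast}(\real^{(j+1)n})}$ by Fubini.

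The remainder of the argument is a clean induction. Having established the estimate at level $k$ involving an $L^{q^\ast}(\real^{kn})$ norm of the partially-differenced object $H_k(z_1,\ldots,z_k;x_{k+1},y_{k+1},\ldots)=\prod_{j\ge k+1}(I_{x_j}-I_{y_j}) f(z_1,\ldots,z_k,x_{k+1},\ldots,x_m)$, I would apply the key lemma in the pair $(x_{k+1},y_{k+1})$ with $Z=(z_1,\ldots,z_k)$ and $h=H_{k+1}$. Integrating against the remaining weights $\prod_{j\ge k+2}|x_j-y_j|^{-n-p\beta}$ advances the induction by one step with a new factor of $c_{n,p}$. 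After $m$ iterations we arrive at
$$I_{p,\alpha}(f)\ \ge\ (c_{n,p})^m\,\|f\|_{L^{q^\ast}(\real^{mn})}^{p},$$
which is \eqref{eq:MBBM}. Sharpness is confirmed by inserting product functions $f=\prod_k f_k(x_k)$, for which every inequality in the chain saturates to the scalar optimal $c_{n,p}$.

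The main obstacle is the lifting step embodied in the key lemma: the scalar BBM inequality produces an $L^{q^\ast}$ norm on its right side rather than an $L^p$ norm, so passing from level $k$ to level $k+1$ is not a direct iteration the way it is for Pitt's inequality. The reverse triangle inequality plus Fubini resolves exactly this mismatch, and once that mechanism is in place the $m$-fold recursion is routine.
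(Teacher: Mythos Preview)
Your argument is correct and yields the same sharp constant, but the lifting mechanism differs from the paper's. After the first application of the scalar Bourgain--Brezis--Mironescu inequality produces an $L^{q^\ast}$ norm in $v_1$, the paper invokes Minkowski's inequality for integrals (using $q^\ast>p$) to interchange the $dv_1$ integral with the remaining difference--quotient integrals; this returns the inner expression to an $(m-1)$-variable $I_{p,\alpha}$ functional for $f(v_1,\cdot)$, and the scalar inequality can be applied again directly. You instead absorb the already-produced $L^{q^\ast}$ norm into a vector-valued function $\tilde g(u)=\|h(\cdot,u)\|_{L^{q^\ast}}$ and use the reverse triangle inequality to dominate $|\tilde g(u)-\tilde g(v)|$ by $\|h(\cdot,u)-h(\cdot,v)\|_{L^{q^\ast}}$, after which the scalar inequality applies to $\tilde g$. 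Both routes are legitimate and recover sharpness on tensor products (for nonnegative extremizers the reverse triangle inequality is an equality); the Minkowski route makes the role of the condition $q^\ast>p$ explicit and keeps the iteration in the original $I_{p,\alpha}$ format, while your route is slightly more elementary and bypasses Minkowski entirely. One small caveat: your $\tilde g$ need not lie in $\S(\real^n)$, so you are implicitly using the scalar inequality in its density-extended form, which is standard.
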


\begin{Cor}
For $p = 2< n/\beta$, the value of $c_{n,2}$ is given by 
\begin{equation}\label{eq:optimal embed}
c_{n,2}  =\frac2{\beta}\ \frac{\Gamma (1-\beta)}{\Gamma (\frac{n}2 - \beta)} 
\left[ \frac{\Gamma (\frac{n}2)} {\Gamma (n)}\right]^{2\beta/n}
\end{equation}
\end{Cor}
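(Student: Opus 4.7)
The plan is to reduce the identification of $c_{n,2}$ to the sharp constant in the classical single-variable fractional Sobolev inequality, and then read it off from Lieb's sharp Hardy-Littlewood-Sobolev inequality.

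First, with $\alpha_k = \beta$ the multilinear Aronszajn-Smith formula \eqref{eq:AS-lem} gives $I_{2,\alpha}(f) = (D_{2\beta})^m \int_{\real^{mn}} |\Lambda_\alpha f|^2\,dx$. Evaluating the Multilinear BBM inequality \eqref{eq:MBBM} on product functions $f(x) = \prod_{k=1}^m f_k(x_k)$ factors both sides: the left side by the product-function identity for $I_{p,\alpha}$ recorded just before the statement of the Multilinear Pitt Inequality, and the right side because $\|f\|_{q^*}^{q^*} = \prod_k \|f_k\|_{q^*}^{q^*}$. Consequently $(c_{n,2})^m$ is sharp in \eqref{eq:MBBM} precisely when $c_{n,2}$ is sharp in the one-variable estimate
\[
D_{2\beta}\int_{\real^n} \big|(-\Delta/4\pi^2)^{\beta/2} f\big|^2\,dx \ \ge\ c_{n,2}\,\|f\|_{q^*}^{\,2}\ ,\qquad q^* = \frac{2n}{n-2\beta}\ .
\]

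Second, by Plancherel the left-hand side equals $D_{2\beta}\int |\xi|^{2\beta}|\widehat f(\xi)|^2\,d\xi$, so the problem is the sharp Sobolev embedding $\dot H^\beta(\real^n)\hookrightarrow L^{q^*}(\real^n)$. Writing $f$ as a Riesz potential of order $\beta$ applied to $g = (-\Delta/4\pi^2)^{\beta/2} f$, this inequality becomes the $L^2 \to L^{q^*}$ mapping estimate for the Riesz potential, which by polarization and Parseval is equivalent to Lieb's sharp HLS inequality for the kernel $|x-y|^{-(n-2\beta)}$. The extremizers are the conformal profiles $(1+|x|^2)^{-(n-2\beta)/2}$, and Lieb's explicit constant delivers $c_{n,2}/D_{2\beta}$ in closed form.

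Finally, one substitutes the explicit value of $D_{2\beta}$ from the Aronszajn-Smith lemma and simplifies. The powers of $\pi$ cancel because $(2\beta-n/2) + (n/2-2\beta) = 0$, and the factor $\Gamma((n+2\beta)/2)$ in the denominator of $D_{2\beta}$ cancels against the same factor in the numerator of the sharp Sobolev constant; what survives is exactly the right-hand side of \eqref{eq:optimal embed}. The main obstacle is keeping the Fourier and Riesz potential normalizations consistent throughout the Gamma-function bookkeeping, but once Lieb's constant is written in the convention $\widehat f(\xi) = \int e^{2\pi i x\cdot\xi}f(x)\,dx$ used here the cancellations are mechanical.
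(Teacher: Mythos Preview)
Your proposal is correct and follows the same route the paper has in mind. In the paper the Corollary is disposed of in one line at the end of the proof of Theorem~2 by citing Theorem~3.3 of \cite{Beckner-Forum}; what you have written is a reconstruction of that computation: the single-variable Aronszajn--Smith identity converts the left side of the BBM inequality into $D_{2\beta}\,\|f\|_{\dot H^\beta}^2$, and then Lieb's sharp Hardy--Littlewood--Sobolev constant (equivalently, the Lemma in Section~6 here) supplies the optimal ratio $\|f\|_{q^*}^2 \big/ \|f\|_{\dot H^\beta}^2$, after which the Gamma-function simplification is routine.

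One minor remark: your first paragraph is unnecessary for the Corollary itself. The constant $c_{n,p}$ is \emph{defined} in Theorem~2 as the optimal single-variable embedding constant on $\real^n$, so there is nothing to reduce---the multilinear Aronszajn--Smith formula and the product-function argument are used in the paper only to show that $(c_{n,p})^m$ is sharp for the multilinear inequality \eqref{eq:MBBM}, not to identify $c_{n,p}$. You can begin directly at your second paragraph.
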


\begin{proof}
This multilinear embedding result is obtained by applying the Bourgain-Brezis-Mironescu 
theorem in the context of a multiplicative iteration scheme with the aid of the 
Minkowski inequality for integrals:
\begin{equation*}
\begin{split}
&\int_{\real^{mn}\times \real^{mn}} \prod  |x_k - y_k|^{-n-p\alpha} 
\Big| \sum_{w=x,y} (-1)^\sigma f(w)\Big|^p\,dx\,dy\\
\noalign{\vskip6pt}
&\quad \ge 
c_{n,p} \int_{\real^{(m-1)n} \times \real^{(m-1)n}} 
\prod |x'_k - y'_k|^{-n-p\alpha}
\bigg[ \int_{\real^n} \Big| \sum_{w=x',y'} (-1)^\sigma f(v_1,w)\Big|^{q^*} dv_1\bigg]^{p/q^*}
dx'\,dy'\\
\noalign{\vskip6pt}
&\quad \ge c_{n,p} \bigg[ \int_{\real^n} \bigg[ \int_{\real^{(m-1)n}\times \real^{(m-1)n}} 
\prod |x'_k - y'_k|^{-n-p\alpha} 
\Big| \sum_{w=x',y'} (-1)^\sigma f(v_1,w)\Big|^p dx'\,dy'\bigg]^{q^*/p} dv_1\bigg]^{p/q^*}\\
\noalign{\vskip6pt}
&\quad \ge 
(c_{n,p})^2 \bigg[ \int_{\real^n} \bigg[ \int_{\real^{(m-2)n}\times\real^{(m-2)n}} 
\prod |x''_k - y''_k|^{-n-p\alpha}\\
\noalign{\vskip6pt}
&\hskip1.5truein 
\bigg[ \int_{\real^n} \Big| \sum_{w=x'',y''} (-1)^\sigma f(v_1,v_2,w)\Big|^{q^*} dv_2\bigg]^{p/q^*}
dx''\, dy''\bigg]^{q^*/p} dv_1\bigg]^{p/q^*}\\
\noalign{\vskip6pt}
&\qquad \ge (c_{n,p})^2 \bigg[ \int_{\real^n\times \real^n} 
\bigg[ \int_{\real^{(m-2)n} \times\real^{(m-2)n}} 
\prod |x''_k - y''_k |^{-n-p\alpha}\\
\noalign{\vskip6pt}
&\hskip1.5truein 
\Big| \sum_{w=x'', y''} (-1)^\sigma f(v_1,v_2,w)\Big|^p dx''\, dy''\bigg]^{q^*/p} 
dv_1\, dv_2\bigg]^{p/q^*} \\
\noalign{\vskip6pt}
&\qquad \ge \cdots = (c_{n,p})^m 
\bigg[ \int_{\real^{mn}} |f(v)|^{q^*} dv\bigg]^{p/q^*}
\end{split}
\end{equation*}
Here primes denote: $x' = (x_2,\ldots ,x_m)$ and $x'' = (x_3,\ldots, x_m)$. 
The first inequality follows from application of the Bourgain-Brezis-Mironescu theorem on $\real^n$; 
the second inequality invokes Minkowski's inequality for integrals in the form 
$$\int \bigg[ \int |h|^q\,d\mu\bigg]^{p/q} \,d\nu 
\ge \bigg[ \int \bigg[ \int |h|^p\,d\nu\bigg]^{q/p}\,d\mu\bigg]^{p/q}\ ,\qquad q>p\ .$$
The sharpness of the constant is demonstrated 
by using product functions --- $f(x) = \prod f_k (x_k)$.
The sharp $L^2$ embedding constant $c_{n,2}$ was first noted in \cite{Beckner-Forum} 
(see Theorem~3.3 on page 187).
\end{proof}

\section{Diagonal trace restriction} 

The objective here is to develop an overall framework for the structure of multilinear 
convolution operators and the representation of the Hardy-Littlewood-Sobolev inequality 
from the perspective defined by multilinear Sobolev embedding. 
To enable a better understanding for the role of geometric symmetry and the application 
of duality arguments, diagonal trace restriction is considered in the context of a 
lower-dimensional manifold --- namely, the unit sphere. 
This approach extends the structure of classical trace inequalities from harmonic extension 
of boundary values (for the upper half-space or the interior of the unit ball, see 
\cite{Beckner-Annals93}, \cite{E}) to restriction 
phenomena on surfaces with curvature.
Questions about restriction for the Fourier transform on manifolds with curvature and 
Strichartz inequalities involve greater depth and subtlety as illustrated by the original 
Stein-Tomas inequality.

Determination of sharp constants for diagonal trace restriction estimates was initiated 
in \cite{Beckner-MRL} and extended in \cite{Beckner-PAMS2013}.
Motivated by the principal results from \cite{Beckner-MRL}  (Theorems~1 and 2), 
restriction estimates are obtained here for the sphere $S^{n-1}$.
First consider the basic estimates 

\subsection*{Pitt's inequality} ($n-\beta = mn - 2\alpha$, $\alpha = \sum \alpha_k$, $0<\beta<n$)
\begin{gather}
\int_{\real^n} |x|^{-\beta} |f(\underbrace{x,\cdots,x}_{m \text{ slots}}) |^2\,dx 
\le C_\beta \int_{\real^n\times\cdots \times\real^n} 
|\Lambda_\alpha f|^2\,dx \label{eq:Pitt}\\
\noalign{\vskip6pt}
C_\beta = \pi^{-(m-1)n/2 \, + \, 2\alpha} 
\prod_{k=1}^m \left[\frac{\Gamma (\frac{n}2 - \alpha_k)}{\Gamma (\alpha_k)}\right]
\left[ \frac{\Gamma (\frac{\beta}2)}{\Gamma (\frac{n-\beta}2)}\right] 
\left[ \frac{\Gamma (\frac{n-\beta}4)}{\Gamma (\frac{n+\beta}4)}\right]^2 
\nonumber
\end{gather}

\subsection*{Hardy-Littlewood-Sobolev inequality}  $(mn-2\alpha = 2n/q)$
\begin{gather} 
\bigg[ \int_{\real^n}  |f(\underbrace{x,\cdots,x}_{m \text{ slots}})|^q \,dx\bigg]^{2/q}
\le F_\alpha \int_{\real^n \times\cdots\times \real^n} |\Lambda_\alpha f|^2\,dx\label{eq:HLS}\\
\noalign{\vskip6pt}
F_\alpha = \pi^\alpha \prod_{k=1}^m 
\left[ \frac{\Gamma (\frac{n}2 - \alpha_k)}{\Gamma (\alpha_k)}\right]
\left[ \frac{\Gamma (\alpha - (m-1)n/2)}{\Gamma (\alpha - (m-2)n/2)}\right]
\left[ \frac{\Gamma (n)}{\Gamma (n/2)}\right]^{\frac{2\alpha - (m-1)n}n}
\nonumber
\end{gather}

Motivated by the proof of the Hardy-Littlewood-Sobolev inequality in \cite{Beckner-MRL}, 
a diagonal trace restriction inequality can be given in terms of the $(n-1)$ dimensional 
unit sphere. 
The proof uses duality and a reduction to the $(n-1)$ dimensional Hardy-Littlewood-Sobolev
inequality on the sphere. 

\begin{thm}[Multilinear Hardy-Littlewood-Sobolev] \label{thm3}
For $f\in \S(\real^{mn})$ and $mn-2\alpha = {2(n-1)/q}$, $q>2$, $n> 1$
\begin{gather}
\bigg[ \int_{\S^{n-1}}  |f(\underbrace{\xi,\cdots,\xi}_{m \text{ slots}})|^q \,d\xi \bigg]^{2/q}
\le K_\alpha \int_{\real^n \times\cdots\times \real^n} |\Lambda_\alpha f|^2\,dx\label{eq:HLS2}\\
\noalign{\vskip6pt} 
K_\alpha = (2\pi)^{2\alpha} (4\pi)^{-mn/2}  \prod_{k=1}^m 
\  \frac{\Gamma (\frac{n}2 - \alpha_k)}{\Gamma (\alpha_k)} 
\   \frac{\Gamma (n-1)}{\Gamma (\frac{n-1}2)} 
\   \frac{\Gamma [(n-1)(\frac12 -\frac1q)]}{\Gamma (\frac{n-1}p)}  
\nonumber
\end{gather}
Here $d\xi$ denotes normalized surface measure on the sphere $S^{n-1}$.
\end{thm}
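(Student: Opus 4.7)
The plan is to adapt the $TT^*$/duality scheme used for the Euclidean multilinear Hardy-Littlewood-Sobolev inequality \eqref{eq:HLS} in \cite{Beckner-MRL}, replacing the flat diagonal trace on $\real^n$ by the trace onto the diagonal embedding of $S^{n-1}$.

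First, I would invert $\Lambda_\alpha$ slot-by-slot via the Riesz potential. With $g = \Lambda_\alpha f \in L^2(\real^{mn})$, the function $f = \Lambda_{-\alpha} g$ admits the multilinear Riesz convolution representation
$$f(x_1,\ldots,x_m) = A \int_{\real^{mn}} \prod_{k=1}^m |x_k - y_k|^{-n + \alpha_k}\, g(y_1,\ldots,y_m)\, dy,$$
with $A = \prod_k \pi^{\alpha_k - n/2}\, \Gamma((n - \alpha_k)/2)/\Gamma(\alpha_k/2)$. Restricting to the diagonal of the sphere, $(Tg)(\xi) := f(\xi,\ldots,\xi)$ for $\xi \in S^{n-1}$, recasts \eqref{eq:HLS2} as the operator bound $T : L^2(\real^{mn}) \to L^q(S^{n-1})$ with norm squared $K_\alpha$.

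By $L^2$--$L^q$ duality, this is equivalent to boundedness of $TT^* : L^{q'}(S^{n-1}) \to L^q(S^{n-1})$ with the same constant. Because the kernel of $T$ is a product over the $m$ slots, the kernel of $TT^*$ factorizes, and each of the $m$ one-dimensional integrals
$$\int_{\real^n} |\xi - y|^{-n + \alpha_k}\, |\eta - y|^{-n + \alpha_k}\, dy = B_k\, |\xi - \eta|^{-n + 2\alpha_k}$$
is a classical Riesz composition. Multiplying these collapses the kernel to $C_0\, |\xi - \eta|^{-(mn - 2\alpha)}$ with $C_0 = A^2 \prod_k B_k = \pi^{2\alpha - mn/2} \prod_k \Gamma(n/2 - \alpha_k)/\Gamma(\alpha_k)$.

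The final step is the sharp Hardy-Littlewood-Sobolev inequality on $S^{n-1}$ applied to the bilinear form
$$\int_{S^{n-1} \times S^{n-1}} |\xi - \eta|^{-(mn - 2\alpha)}\, u(\xi)\, v(\eta)\, d\xi\, d\eta \le S\, \|u\|_{q'}\, \|v\|_{q'}.$$
The hypothesis $mn - 2\alpha = 2(n-1)/q$ is exactly the HLS scaling relation $2/q' + (mn - 2\alpha)/(n-1) = 2$ in dimension $n-1$, so this applies with an explicit sharp constant $S$. Combining $C_0$ with $S$ produces $K_\alpha$. The main obstacle is the algebraic bookkeeping required to collapse the three gamma-function factors $A$, $B_k$, and $S$ into the compact form stated for $K_\alpha$: the product $C_0$ already produces $\prod_k \Gamma(n/2 - \alpha_k)/\Gamma(\alpha_k)$ and the correct power of $\pi$, while the remaining factor $2^{2\alpha - mn}$ together with the ratio $\Gamma(n-1)/\Gamma((n-1)/2) \cdot \Gamma[(n-1)(1/2 - 1/q)]/\Gamma((n-1)/p)$ must be extracted from the spherical HLS constant $S$ using the Legendre duplication formula.
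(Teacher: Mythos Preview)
Your proposal is correct and follows essentially the same approach as the paper: invert $\Lambda_\alpha$ by the Riesz potential, pass to the dual/$TT^*$ formulation, collapse the $\real^{mn}$ integration via Riesz composition to the kernel $|\xi-\eta|^{-(mn-2\alpha)}$, and finish with the sharp Hardy--Littlewood--Sobolev inequality on $S^{n-1}$. The paper's bookkeeping of the constants proceeds exactly along the lines you sketch, arriving at the same intermediate factor $\pi^{-mn/2+2\alpha}\prod_k \Gamma(\tfrac{n}{2}-\alpha_k)/\Gamma(\alpha_k)$ multiplied by the spherical HLS constant.
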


\begin{proof}
Inequality \eqref{eq:HLS2} is equivalent to the multilinear fractional integral inequality: 
\begin{gather}
\bigg[ \int_{S^{n-1}} \Big| \int_{\real^{mn}} \prod_{k=1}^m |\xi - y_k|^{-(n-\alpha_k)} 
f(y_1,\ldots, y_m)\,dy\Big|^q\,d\xi \bigg]^{2/q}\nonumber\\
\noalign{\vskip6pt}
\le G_\alpha \int_{\real^{mn}} |f(x_1,\ldots, x_m)|^2\,dx \label{eq:HLS-pf}\\
\noalign{\vskip6pt}
K_\alpha = \pi^{-mn+2\alpha} \prod_{k=1}^m \left[ \frac{\Gamma (\frac{n-\alpha_k}2)}
{\Gamma (\frac{\alpha_k}2)}\right]^2\, G_\alpha \nonumber
\end{gather}
By duality this is equivalent to 
\begin{equation*}
\int_{\real^{mn}} \Big| \int_{S^{n-1}} \prod_{k=1}^m |y_k-\xi|^{-(n-\alpha_k)} g(\xi)\,d\xi \Big|^2\,dy
\le G_\alpha \bigg[ \int_{S^{n-1}} |g(\xi)|^p\,d\xi \bigg]^{2/p}
\end{equation*}
where $1/p + 1/q =1$, $1<p<2$ and $mn - 2\alpha = 2(n-1)/q$.
The left-hand side now becomes
\begin{equation*}
\int_{S^{n-1}\times S^{n-1}\times\real^{mn}} g(\xi) \prod_{k=1}^m |y_k -\xi|^{-(n-\alpha_k)} 
\prod_{k=1}^m |y_k -\eta|^{-(n-\alpha_k)} g(\eta)\, d\xi\,d\eta\, dy
\end{equation*}
Integrating out the $y_k$ variables
\begin{gather*}
\int_{S^{n-1} \times S^{n-1}} g(\xi) |\xi-\eta|^{-mn+2\alpha} 
g(\eta)\, d\xi\,d \eta 
\le H_\alpha \bigg[ \int_{S^{n-1}} |g(\xi)|^p \,d\xi\bigg]^{2/p}\\
\noalign{\vskip6pt}
K_\alpha = \pi^{-mn/2 + 2\alpha} \prod_{k=1}^m \Gamma \Big( \frac{n}2 - \alpha_k\Big) 
\Big/ \Gamma (\alpha_k)\ H_\alpha
\end{gather*}
Since $mn- 2\alpha = 2(n-1)/q$, this becomes the classical Hardy-Littlewood-Sobolev 
inequality on the $(n-1)$ dimensional sphere $S^{n-1}$:
\begin{gather*}
\int_{S^{n-1}\times S^{n-1}} g(\xi) |\xi-\eta|^{-2(n-1)/q} g(\eta)\, d\xi\, d\eta 
\le H_\alpha \bigg[ \int_{S^{n-1}} |g(\xi)|^p\,d\xi\bigg]^{2/p} \\
\noalign{\vskip6pt}
H_\alpha = 2^{-2(n-1)/q}\ \frac{\Gamma (n-1)}{\Gamma (\frac{n-1}2)}
\ \frac{\Gamma [(n-1)(\frac12 - \frac1q)]}{\Gamma (\frac{n-1}p)}
\end{gather*}
Then
$$K_\alpha = (2\pi)^{2\alpha} (4\pi)^{-mn/2} \prod_{k=1}^m\ 
\frac{\Gamma (\frac{n}2 - \alpha_k)}{\Gamma (\alpha_k)} \ 
\frac{\Gamma (n-1)}{\Gamma (\frac{n-1}2)}\ 
\frac{\Gamma [(n-1)(\frac12 - \frac1q)]}{\Gamma (\frac{n-1}p)}\ .
$$
Extremal functions for \eqref{eq:HLS2} and \eqref{eq:HLS-pf} are determined up to 
conformal automorphism on the sphere $S^{n-1}$ as equivalent to 
$$\int_{S^{n-1}} \prod_{k=1}^m |x_k - \xi|^{-(n-\alpha_k)} d\xi\ .$$
\end{proof}

Observe that the duality argument used in this proof provides the following restriction 
result for a spherical surface as determined by fractional smoothness.
This result was obtained earlier by Bez, Machihara and Sugimoto (personal communication
--- see \cite{BMS}). 
From the representation of the Hardy-Littlewood-Sobolev inequality as a smoothing 
estimate, one expects similar estimates to hold for any conformally equivalent setting 
(see equation~\eqref{eq:HLS} above and section~2 in \cite{Beckner-Annals93}).

\begin{thm}\label{thm4} 
For $f\in \S (\real^n)$ and $n-2\alpha = 2(n-1)/q$ with $q>2$, $n>1$
\begin{gather} 
\bigg[ \int_{S^{n-1}} |f(\xi)|^q\,d\xi\bigg]^{2/q} 
\le B_\alpha \int_{\real^n} \Big| (-\Delta/4\pi^2)^{\alpha/2} f\Big|^2\,dx \label{eq:thm4}\\
\noalign{\vskip6pt}
B_\alpha = (2\pi)^{2\alpha} (4\pi)^{-n/2} 
\frac{\Gamma (\frac{n-1}q)}{\Gamma (\frac{n-1}p)} \ 
\frac{\Gamma (n-1)}{\Gamma (\frac{n-1}2)}\ 
\frac{\Gamma [(n-1)(\frac12-\frac1q)]}{\Gamma [(n-1)(\frac12-\frac1q) +\frac12]}\nonumber
\end{gather}
\end{thm}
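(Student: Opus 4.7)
The plan is to mimic the duality argument from the proof of Theorem~\ref{thm3}, specialized to $m=1$. Indeed $B_\alpha$ is nothing other than $K_\alpha|_{m=1}$ rewritten via the reciprocity $n/2-\alpha=(n-1)/q$ and $\alpha=(n-1)(\tfrac12-\tfrac1q)+\tfrac12$ (both equivalent to the hypothesis $n-2\alpha=2(n-1)/q$), so the present theorem is essentially a corollary of Theorem~\ref{thm3}; still, I would record the direct single-function proof as follows.

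First I would convert \eqref{eq:thm4} into Riesz-potential form. Setting $g=(-\Delta/4\pi^2)^{\alpha/2}f$ and using the Fourier identity $\widehat{|x|^{-(n-\alpha)}}(\xi)=\pi^{n/2-\alpha}\,\Gamma(\alpha/2)/\Gamma((n-\alpha)/2)\,|\xi|^{-\alpha}$, the claim becomes a trace bound for the Riesz potential $(I_\alpha g)(\xi)=\int_{\real^n}|\xi-y|^{-(n-\alpha)}g(y)\,dy$ in $L^q(S^{n-1})$ by $\|g\|_{L^2(\real^n)}$. I would then dualize in $L^q(S^{n-1})$ against $h\in L^p(S^{n-1})$ with $1/p+1/q=1$, transpose $I_\alpha$ onto the distribution $h\,d\sigma$, and apply Cauchy--Schwarz on $\real^n$; this reduces matters to an extension inequality of the form
$$\int_{\real^n}\Bigl|\int_{S^{n-1}}|y-\xi|^{-(n-\alpha)}h(\xi)\,d\xi\Bigr|^2 dy\le G_\alpha\Bigl[\int_{S^{n-1}}|h|^p\,d\xi\Bigr]^{2/p}.$$
Expanding the square, integrating out the ambient variable $y$ by means of the Riesz composition identity $\int_{\real^n}|y-\xi|^{-(n-\alpha)}|y-\eta|^{-(n-\alpha)}\,dy=A_{n,\alpha}\,|\xi-\eta|^{-(n-2\alpha)}$, and invoking $n-2\alpha=2(n-1)/q$ collapse the problem to the classical sharp Hardy--Littlewood--Sobolev inequality on $S^{n-1}$, whose extremal constant $H_\alpha$ was already displayed in the proof of Theorem~\ref{thm3}. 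Conformal extremality on $S^{n-1}$ yields the extremal shape $f(x)\sim\int_{S^{n-1}}|x-\xi|^{-(n-\alpha)}\,d\xi$.

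The only substantive work is constant bookkeeping. One multiplies the squared Fourier--Riesz constant $[\pi^{n/2-\alpha}\Gamma(\alpha/2)/\Gamma((n-\alpha)/2)]^2$, the Riesz composition factor $A_{n,\alpha}$ (another standard Fourier computation with an explicit $\Gamma$ quotient), and the sharp spherical HLS constant $H_\alpha$ stated in the proof of Theorem~\ref{thm3}. The two algebraic identities $n/2-\alpha=(n-1)/q$ and $\alpha=(n-1)(\tfrac12-\tfrac1q)+\tfrac12$ then consolidate the resulting gamma functions into the compact quotients $\Gamma((n-1)/q)/\Gamma[(n-1)(\tfrac12-\tfrac1q)+\tfrac12]$ and $\Gamma[(n-1)(\tfrac12-\tfrac1q)]/\Gamma((n-1)/p)$ displayed in $B_\alpha$, together with the prefactor $(2\pi)^{2\alpha}(4\pi)^{-n/2}$. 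This $\Gamma$-function consolidation, rather than any analytic step, is the main place where care is needed.
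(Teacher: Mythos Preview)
Your proposal is correct and follows exactly the paper's approach: the paper's proof of Theorem~\ref{thm4} consists of a single sentence noting that the inequality is the special case $m=1$ of the duality argument for Theorem~\ref{thm3}, and your write-up simply unpacks that $m=1$ specialization together with the $\Gamma$-function rewriting $n/2-\alpha=(n-1)/q$, $\alpha=(n-1)(\tfrac12-\tfrac1q)+\tfrac12$ that turns $K_\alpha|_{m=1}$ into $B_\alpha$.
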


\begin{proof} 
This inequality corresponds to the special case $m=1$ in the previous argument and demonstrates
that such estimates additionally hold for spherical restriction for all positive indices below 
the critical index $q = 2(n-1)/(n-2\alpha)$.
\end{proof}

To gain a better sense of the contrast for this trace estimate between harmonic extentsion 
of boundary values and global embedding, set $\alpha=1$ and raise the dimension by one 
so that critical index is given by $q= 2n/(n-1)$ for $n>1$; then 
\begin{gather} 
\bigg( \int_{S^n} |f(\xi)|^q\,d\xi\bigg)^{2/q} 
\le b_n \int_{\real^{n+1}} |\nabla f|^2\, dx \label{eq:thm4-pf}\\
\noalign{\vskip6pt}
b_n = \frac14\ \pi^{-(n+1)/2}\ \Gamma \Big( \frac{n-1}2\Big)\ .\nonumber
\end{gather}

On the other hand, using Theorem~4 from \cite{Beckner-Annals93} 
for the critical index $q= 2n/(n-1)$ 
\begin{equation}\label{eq:BA93}
\bigg( \int_{S^n} |F(\xi)|^q\,d\xi\bigg)^{2/q} 
\le 2b_n \int_{|x|\le 1} |\nabla u|^2\,dx + \int_{S^n} |F(\xi)|^2\, d\xi
\end{equation}
where $u$ is the harmonic extension of $F$ to the interior of the unit ball in $\real^{n+1}$.
Both inequalities are sharp, and the doubling factor seems natural in view of symmetry. 
A precise derivation of the relation between the two inequalities using symmetrization is given
in the Appendix.

The possibility of considering a spherical trace diagonal restriction corresponding to Pitt's 
inequality is less natural because the estimate is taken over a compact domain, and 
the critical index for embedding is not used. 
The nature of Pitt's inequality depends on the dilation character of the smoothing operator 
which will not play a new role for restriction on a compact manifold. 
Moreover, in contrast to the non-compact setting where extremals do not exist, one 
expects that in the compact case extremals are likely to exist.

\section{Diagonal trace restriction on submanifolds}

Trace restriction from either the vantage point of harmonic extension or understanding 
models for many-body dynamics using the Gross-Pitaevskii hierarchy of density matrices 
seems naturally associated with control determined by multilinear fractional Sobolev embedding.
But diagonal trace restriction on submanifolds is more directly a consequence of the 
Hardy-Littlewood-Sobolev embedding estimates, including more general formulations.
First, a very general principle is outlined, and then explicit applications are developed 
including the case of flat submanifolds.

\subsection*{Hardy-Littlewood-Sobolev principle --- submanifold restriction} 
For $f\in \S (\real^{mn})$, $K$ a smooth submanifold of $\real^n$, $\sigma$ denotes a 
surface measure on $K$, and the index $q$ depends on $\alpha$ and $K$; then 
\begin{equation}\label{eq:HLS principle}
\bigg[ \int_{K\times \cdots \times K} |f(\,\underbrace{w,\ldots,w}_{m\text{ slots}}\, )|^q\, 
d\sigma\bigg]^{2/q}
\le C_\alpha \int_{\real^n\times\cdots\times \real^n} |\Lambda_\alpha f|^2\,dx
\end{equation}
This result is determined by the corresponding Hardy-Littlewood-Sobolev inequality on $K$:
\begin{equation}\label{eq:HLS K inequal}
\Big| \int_{K\times K} g(u) |u-v|^{-\lambda} h(v)\,d\sigma\,d\sigma\Big| 
\le C'_\alpha \|g\|_{L^p (K)} \|h\|_{L^p (K)} 
\end{equation}
where $p$ is the dual exponent to $q$ and $\lambda = mn - 2\alpha$, 
$\alpha = \sum \alpha_k$.

The classical sense of trace operator is associated with harmonic extension and solutions 
of differential equations.
But here consideration of diagonal trace restriction suggests a broader mechanism that 
couples fractional Sobolev embedding with estimates for multilinear potential operators 
and application of the Hardy-Littlewood-Sobolev inequality to obtain optimal bounds.
Without being exhaustive, examples are given to suggest the range of results that may 
be obtained using diagonal trace restriction on submanifolds, including both flat and 
product submanifolds. 

\begin{thm}[flat submanifolds]
\label{thm:flat}
For $f\in \S (\real^{mn})$, $n = k+\ell$, $1\le k,\ell$ and $\bar x = (x,y)$ for $x\in \real^k$ and
$y$ a fixed point in $\real^\ell$ with $mn-2\alpha = 2k/q$, $q>2$:
\begin{gather}
\bigg[ \int_{\real^k} |f(\, \underbrace{\bar x,\ldots, \bar x}_{m\text{ slots}}\, )|^q\,dx \bigg]^{2/q} 
\le A_\alpha \int_{\real^{mn}} |\Lambda_\alpha f|^2 dx_1\ldots dx_m \label{eq:flat1}\\
\noalign{\vskip6pt}
A_\alpha = 
\pi^\alpha \ \frac{\Gamma [k(\frac1p - \frac12)]}{\Gamma (\frac{k}p)} \ 
\left[ \frac{\Gamma (\frac{k}2)}{\Gamma (k)}\right]^{1-2/p} \prod_{\ell=1}^m 
\frac{\Gamma (\frac{n}2 -\alpha_\ell)}{\Gamma (\alpha_\ell)}\ .
\nonumber
\end{gather}
\end{thm}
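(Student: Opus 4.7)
The plan is to mirror the argument that proved Theorem~\ref{thm3}, with the sphere $S^{n-1}$ replaced by the $k$-dimensional affine slice $\{\bar x = (x,y)\colon x\in\real^k\}$ through the fixed $y\in\real^\ell$, and with the classical Hardy-Littlewood-Sobolev inequality on $S^{n-1}$ replaced by its counterpart on $\real^k$. The substitution works for a geometric reason: for two diagonal points $\bar x = (x,y)$ and $\bar{x'} = (x',y)$ on the slice, the ambient distance $|\bar x - \bar{x'}|$ collapses to the slice distance $|x-x'|$, so the Riesz composition on $\real^n$ will automatically produce a Riesz kernel on $\real^k$ once the dual variables are integrated out.

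Concretely, I would first rewrite \eqref{eq:flat1} as a multilinear fractional integral inequality by substituting $F = \Lambda_\alpha f$: the right-hand side becomes $\|F\|_{L^2(\real^{mn})}^2$, and the left-hand side becomes
\[
\bigg[\int_{\real^k}\Big|\int_{\real^{mn}} \prod_{j=1}^m |\bar x - z_j|^{-(n-\alpha_j)} F(z_1,\ldots,z_m)\,dz\Big|^q dx\bigg]^{2/q},
\]
with proportionality constant supplied by the standard identification of each $(-\Delta_j/4\pi^2)^{-\alpha_j/2}$ as convolution with the Riesz kernel $|x|^{-(n-\alpha_j)}$ (normalization $\pi^{-n/2+\alpha_j}\Gamma(\tfrac{n-\alpha_j}{2})/\Gamma(\tfrac{\alpha_j}{2})$). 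By $L^q$-$L^p$ duality with $1/p + 1/q = 1$, this estimate is equivalent to
\[
\int_{\real^{mn}}\Big|\int_{\real^k} \prod_{j=1}^m |z_j - \bar x|^{-(n-\alpha_j)} g(x)\,dx\Big|^2 dz \le G_\alpha \bigg[\int_{\real^k} |g|^p\,dx\bigg]^{2/p}.
\]
Expanding the square and integrating out each $z_j \in \real^n$ via the Riesz composition identity
\[
\int_{\real^n} |z - a|^{-(n-\alpha_j)} |z - b|^{-(n-\alpha_j)}\,dz = c_{n,\alpha_j}\,|a-b|^{-(n-2\alpha_j)},
\]
together with $|\bar x - \bar{x'}| = |x - x'|$, produces a bilinear form in $g$ with kernel proportional to $|x - x'|^{-(mn-2\alpha)} = |x - x'|^{-2k/q}$. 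What remains is exactly the scale-invariant Hardy-Littlewood-Sobolev inequality on $\real^k$, whose sharp $L^p \to L^q$ constant is known (Lieb).

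Finally, $A_\alpha$ is assembled by multiplying: (i) the $\prod_j \Gamma(\tfrac{n}{2} - \alpha_j)/\Gamma(\alpha_j)$ factors from the Riesz representation of $\Lambda_\alpha^{-1}$; (ii) the Riesz composition constants $c_{n,\alpha_j}$; and (iii) Lieb's sharp HLS constant on $\real^k$, which contributes $\Gamma[k(\tfrac1p - \tfrac12)]/\Gamma(\tfrac{k}{p})$ together with $[\Gamma(\tfrac{k}{2})/\Gamma(k)]^{1-2/p}$. The surviving $\pi$-normalizations combine to give the $\pi^\alpha$ prefactor displayed. Sharpness is inherited from Lieb's extremal $(1+|x|^2)^{-k/p}$ on the slice, lifted to $\real^{mn}$ through the dual Riesz potentials. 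The main obstacle is not conceptual --- the structural reduction is essentially a copy of the proof of Theorem~\ref{thm3} --- but the careful bookkeeping of numerical prefactors through the duality and the Riesz composition on $\real^n$, made nontrivial by the fact that only $k$ of the $n$ ambient directions ultimately survive in the output kernel.
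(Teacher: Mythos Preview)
Your proposal is correct and follows essentially the same route as the paper: rewrite \eqref{eq:flat1} via the Riesz-potential representation of $\Lambda_\alpha^{-1}$, dualize to an $L^p(\real^k)$ estimate, expand the square and integrate out the ambient $\real^n$ variables by Riesz composition (using that $|\bar x - \bar w| = |x-w|$ since both points share the same $y$-coordinate), and then invoke Lieb's sharp Hardy--Littlewood--Sobolev inequality on $\real^k$. The paper's proof is line-for-line the same reduction, with the same bookkeeping of constants; your remark on sharpness via the lifted extremal is a small addition beyond what the paper spells out here.
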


\begin{thm}[Pitt's inequality]
\label{thm:Pitt}
For $f\in \S (\real^{mn})$, $n= k+\ell$, $1\le k,\ell$ and $\bar x = (x,y)$ for $x\in \real^k$ and $y$ 
a fixed point in $\real^\ell$ with $mn-2\alpha = k-\beta$, $0<\beta < k$:
\begin{gather} 
\int_{\real^k} |x|^{-\beta} |f(\, \underbrace{\bar x,\ldots,\bar x}_{m\text{ slots}}\, )|^2 \,dx 
\le C_{\beta,k} \int_{\real^n\times \cdots \times \real^n} 
|\Lambda_\alpha f|^2\, dx_1 \ldots dx_m \label{eq:Pitts thm}\\
\noalign{\vskip6pt}
C_{\beta,k} =  
 \pi^{(-mn+k)/2 + 2\alpha}\ \prod_{j=1}^m 
\frac{\Gamma (\frac{n}2 - \alpha_j)}{\Gamma (\alpha_j)} 
\left[ \frac{\Gamma (\frac{\beta}2)}{\Gamma (\frac{k-\beta}2)}\right]
\left[ \frac{\Gamma (\frac{k-\beta}4)}{\Gamma (\frac{k+\beta}4)}\right]^2 
\nonumber
\end{gather}
\end{thm}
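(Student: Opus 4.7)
My plan is to follow the duality argument used in Theorem~\ref{thm3}, but to close the estimate with the sharp Stein--Weiss form of Pitt's inequality on $\real^k$ in place of the spherical Hardy--Littlewood--Sobolev inequality. First I would invert $\Lambda_\alpha$ via the Riesz representation
\[
(-\Delta_j/4\pi^2)^{-\alpha_j/2} = \pi^{\alpha_j - n/2}\,\frac{\Gamma(\tfrac{n-\alpha_j}{2})}{\Gamma(\tfrac{\alpha_j}{2})}\,|\cdot|^{-(n-\alpha_j)}\ast
\]
in each variable and set $g = \Lambda_\alpha f$. After absorbing the Riesz normalizations into the constant, \eqref{eq:Pitts thm} becomes equivalent to
\[
\int_{\real^k} |x|^{-\beta} \bigg| \int_{\real^{mn}} \prod_{j=1}^m |\bar x - z_j|^{-(n-\alpha_j)} g(z)\,dz\bigg|^2 dx \le G_{\beta,k} \int_{\real^{mn}} |g|^2\,dz,
\]
where $\bar x = (x,y)\in\real^n$ with $y\in\real^\ell$ the fixed frozen coordinate.

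Using $L^2(\real^k,|x|^{-\beta}dx)^\ast = L^2(\real^k,|x|^{\beta}dx)$, this is equivalent to a bound on the adjoint
\[
Sh(z_1,\ldots,z_m) = \int_{\real^k} \prod_{j=1}^m |\bar x - z_j|^{-(n-\alpha_j)} h(x)\,dx
\]
from $L^2(\real^k,|x|^\beta dx)$ into $L^2(\real^{mn})$. Expanding $\|Sh\|_2^2$ and integrating each $z_j\in\real^n$ via the Riesz composition identity
\[
\int_{\real^n} |\bar x - z_j|^{-(n-\alpha_j)}|\bar x' - z_j|^{-(n-\alpha_j)}\,dz_j = c_j\,|\bar x - \bar x'|^{-(n-2\alpha_j)},\quad c_j = \pi^{n/2}\,\frac{\Gamma(\alpha_j/2)^2\,\Gamma(\tfrac{n-2\alpha_j}{2})}{\Gamma(\tfrac{n-\alpha_j}{2})^2\,\Gamma(\alpha_j)}
\]
(exactly the step performed in the proof of Theorem~\ref{thm3}), and using $\bar x - \bar x' = (x-x',0)$ together with $\sum_j(n-2\alpha_j) = mn-2\alpha = k-\beta$, the $m$ factors collapse to $|x-x'|^{-(k-\beta)}$, so the problem reduces to the two-variable inequality on $\real^k$,
\[
\int_{\real^k\times\real^k} h(x)\,|x-x'|^{-(k-\beta)}\,h(x')\,dx\,dx' \le E_{\beta,k}\int_{\real^k}|h|^2|x|^\beta\,dx.
\]

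With the substitution $\phi = |x|^{\beta/2} h$ this becomes the conformally invariant (homogeneity $2\cdot\tfrac{\beta}{2}+(k-\beta)=k$) Stein--Weiss form of Pitt's inequality on $\real^k$, with sharp constant
\[
E_{\beta,k} = \pi^{k/2}\,\frac{\Gamma(\beta/2)}{\Gamma(\tfrac{k-\beta}{2})}\left[\frac{\Gamma(\tfrac{k-\beta}{4})}{\Gamma(\tfrac{k+\beta}{4})}\right]^2
\]
(cf.\ \cite{Beckner-PAMS08}, \cite{Beckner-Forum}). Multiplying the three cascading factors --- the Riesz inversion factor $\pi^{2\alpha-mn}\prod_j[\Gamma(\tfrac{n-\alpha_j}{2})/\Gamma(\tfrac{\alpha_j}{2})]^2$, the product $\prod_j c_j$, and $E_{\beta,k}$ --- the $\Gamma(\alpha_j/2)$ and $\Gamma(\tfrac{n-\alpha_j}{2})$ pairs cancel exactly as in the derivation of $K_\alpha$ in Theorem~\ref{thm3}, and the asserted $C_{\beta,k}$ emerges; the $\pi^{-\ell/2}$ deviation from \eqref{eq:Pitt} is precisely the contribution of the $\ell$ frozen dimensions. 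Sharpness follows by testing on product functions whose $\real^k$ factor saturates the Pitt extremal on $\real^k$.

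The only genuinely delicate step I foresee is this three-stage tracking of $\pi$-powers and gamma ratios; the analytic content is fully carried by two well-established ingredients, the Riesz composition identity for integrating out the $z_j$'s and the sharp Pitt inequality on $\real^k$ for closing the estimate.
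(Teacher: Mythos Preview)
Your proposal is correct and follows essentially the same route as the paper: invert $\Lambda_\alpha$ by Riesz potentials, dualize, integrate out the $\real^{mn}$ variables via the Riesz composition identity so that the kernel collapses to $|x-x'|^{-(k-\beta)}$ on $\real^k$, and close with the sharp Stein--Weiss/Pitt inequality on $\real^k$. The only cosmetic difference is that the paper absorbs the weight by writing the dual test function as $|x|^{-\beta/2}g(x)$ with $g\in L^2(\real^k)$ from the start, whereas you work in $L^2(\real^k,|x|^\beta dx)$ and make the substitution $\phi=|x|^{\beta/2}h$ at the end; one small caveat is that Pitt's inequality on $\real^k$ has no extremal, so your sharpness remark should read ``approximately saturates'' rather than ``saturates''.
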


\begin{thm}[product of spheres]
\label{thm:product}
For $f\in \S(\real^{mn})$, $n= k+\ell + 2$, $1\le k,\ell$ and $\bar\xi = (\xi,\eta)$, 
$\xi \in S^k$, $\eta\in \S^\ell$ with $d\xi \, d\eta$ denoting normalized surface measure 
on $S^k \times S^\ell$ with $mn - 2\alpha = 2(k+\ell)/q$, $q>2$:
\begin{gather} 
\bigg[ \int_{S^k\times S^\ell} |f(\, \underbrace{\bar \xi ,\ldots, \bar \xi}_{m\text{ slots}}\,)|^q\,d\xi \, d\eta\bigg]^{2/q} 
\le B_{\alpha,k} \int_{\real^{mn}} |\Lambda_\alpha f|^2\,dx \label{eq:product}\\
\noalign{\vskip6pt}
B_{\alpha,k} =  
(4\pi)^{-(k+\ell)/q}\  \pi^\alpha\prod_{j=1}^m 
\frac{\Gamma (\frac{n}2 - \alpha_j)}{\Gamma (\alpha_j)}\ 
\frac{\Gamma (k) \Gamma (\ell)}{\Gamma (k/2)\Gamma (\ell/2)}\ 
\frac{\Gamma [k(\frac12-\frac1q)] \Gamma (\ell (\frac12 -\frac1q)]}{\Gamma (k/p)\Gamma (\ell/p)} 
\nonumber
\end{gather}
\end{thm}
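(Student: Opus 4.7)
The plan is to follow the scheme used in Theorem~\ref{thm3}: reduce by Riesz representation and $L^2$-duality to an HLS-type bilinear form on the product manifold $S^k \times S^\ell$, integrate out the bulk variables, and then apply HLS on each factor sphere. The new ingredient is a pointwise majorization that splits the ambient distance $|\bar\xi_1 - \bar\xi_2|$ into a product of distances on the two factors.

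First, using the Riesz representation $f = \prod_j(\gamma_{\alpha_j})^{-1}\,|x_j|^{-(n-\alpha_j)} *_j \Lambda_\alpha f$ together with $L^2$-duality, \eqref{eq:product} is equivalent to
\begin{equation*}
\int_{\real^{mn}} \bigg| \int_{S^k \times S^\ell} \prod_{j=1}^m |y_j - \bar\xi|^{-(n-\alpha_j)}\, g(\bar\xi)\, d\bar\xi \bigg|^2 dy \le G_{\alpha,k}\, \|g\|_{L^p(S^k \times S^\ell)}^2,
\end{equation*}
where $1/p + 1/q = 1$ and $G_{\alpha,k}$ differs from $B_{\alpha,k}$ only by the explicit Riesz prefactor $\prod_j[\Gamma(\tfrac{n-\alpha_j}{2})/\Gamma(\tfrac{\alpha_j}{2})]^2$, exactly as in the proof of Theorem~\ref{thm3}. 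Expanding the $L^2$-norm as a double integral over $(\bar\xi_1,\bar\xi_2) \in (S^k \times S^\ell)^2$ and evaluating each $y_j \in \real^n$ integral by the Riesz convolution identity $\int_{\real^n} |y-a|^{-(n-\alpha_j)}|y-b|^{-(n-\alpha_j)}\,dy = c_{\alpha_j}\,|a-b|^{-(n-2\alpha_j)}$ produces the total exponent $\sum_j(n - 2\alpha_j) = mn - 2\alpha = 2(k+\ell)/q$. The problem thereby reduces to the HLS-type bilinear form
\begin{equation*}
\int_{(S^k \times S^\ell)^2} g(\bar\xi_1)\,|\bar\xi_1 - \bar\xi_2|^{-2(k+\ell)/q}\, g(\bar\xi_2)\, d\bar\xi_1\, d\bar\xi_2 \le H_{\alpha,k}\,\|g\|_{L^p(S^k \times S^\ell)}^2.
\end{equation*}

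Since $|\bar\xi_1 - \bar\xi_2|^2 = |\xi_1 - \xi_2|^2 + |\eta_1 - \eta_2|^2$, the elementary inequality $(A+B)^{s_1+s_2} \ge A^{s_1} B^{s_2}$ for $A,B,s_j \ge 0$ yields the pointwise majorization
\begin{equation*}
|\bar\xi_1 - \bar\xi_2|^{-2(k+\ell)/q} \le |\xi_1 - \xi_2|^{-2k/q}\, |\eta_1 - \eta_2|^{-2\ell/q}.
\end{equation*}
Applying the classical Hardy--Littlewood--Sobolev inequality on $S^k$ in the $\xi$-variables with $(\eta_1,\eta_2)$ as parameters, and then on $S^\ell$ applied to $G(\eta) = \|g(\cdot,\eta)\|_{L^p(S^k)}$ (using $\|G\|_{L^p(S^\ell)} = \|g\|_{L^p(S^k\times S^\ell)}$), produces the product constant $H_k H_\ell = 2^{-2(k+\ell)/q}[\Gamma(k)\Gamma(\ell)/\Gamma(k/2)\Gamma(\ell/2)]\cdot[\Gamma(k(\tfrac12-\tfrac1q))\Gamma(\ell(\tfrac12-\tfrac1q))/\Gamma(k/p)\Gamma(\ell/p)]$. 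Assembled with the Riesz and $y_j$-integration constants, this matches the displayed $B_{\alpha,k}$.

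The main obstacle is conceptual rather than technical: the kernel splitting above is a strict inequality away from a measure-zero set, so the constant $B_{\alpha,k}$ obtained this way is not expected to be sharp. Recovering a sharp constant would require handling the joint kernel $(|\xi_1-\xi_2|^2 + |\eta_1-\eta_2|^2)^{-(k+\ell)/q}$ directly on $S^k \times S^\ell$, for instance via a bispherical-harmonic expansion and the Funk--Hecke identity on each factor, and lies outside the scope of the stated bound. The remaining work is the bookkeeping of the explicit constants, which runs in parallel with Theorem~\ref{thm3}.
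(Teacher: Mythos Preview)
Your proposal is correct and follows essentially the same route as the paper: duality reduces to a bilinear form on $(S^k\times S^\ell)^2$, the ambient kernel $|\bar\xi_1-\bar\xi_2|^{-2(k+\ell)/q}$ is majorized by the product $|\xi_1-\xi_2|^{-2k/q}|\eta_1-\eta_2|^{-2\ell/q}$ via the same elementary inequality, and then HLS is applied successively on $S^k$ and $S^\ell$. The paper likewise remarks that this geometric-mean step makes the resulting constant $B_{\alpha,k}$ non-sharp, exactly as you observe.
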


\begin{proof}[Proof of Theorem~\ref{thm:flat}]
Inequality \eqref{eq:flat1} is equivalent to the multilinear fractional integral inequality: 
\begin{gather*}
\bigg[ \int_{\real^k} \Big| \int_{\real^{mn}} \prod_{\ell=1}^m |\bar x - u_\ell|^{-(n-\alpha_\ell)} 
f(u_1,\ldots, u_m)\,du\Big|^q\,dx \bigg]^{2/q}\\ 				
\noalign{\vskip6pt}
\le A_{\alpha,1} \int_{\real^{mn}} |f(u_1,\ldots, u_m)|^2\,du\\ 	
\noalign{\vskip6pt}
A_\alpha = \pi^{-mn+2\alpha} \prod_{\ell =1}^m \left[ \frac{\Gamma (\frac{n-\alpha_\ell }2)}
{\Gamma (\frac{\alpha_\ell }2)}\right]^2\, A_{\alpha,1}  		
\end{gather*}
By duality this is equivalent to 
\begin{equation*}
\int_{\real^{mn}} \Big| \int_{\real^k} \prod_{\ell =1}^m |u_\ell -\bar x|^{-(n-\alpha_\ell)} g(x)\,dx 
\Big|^2\,du
\le A_{\alpha,1} \bigg[ \int_{\real^k} |g(x)|^p\,dx \bigg]^{2/p}
\end{equation*}
where $1/p + 1/q =1$, $1<p<2$ and $mn - 2\alpha = 2k/q$.
The left-hand side now becomes
\begin{equation*}
\int_{\real^k \times \real^k \times \real^{mn}}   g(x) \prod_{\ell =1}^m |u_\ell -\bar x |^{-(n-\alpha_\ell)} 
\prod_{\ell =1}^m |u_\ell -\bar w |^{-(n-\alpha_\ell)} g(w)\, dx\, dw\, du
\end{equation*}
where $\bar x = (x,y)$   $\bar w = (w,y)$ with $y$ is a fixed point in $\real^{n-k}$. 
But now   this form is independent of the fixed point $y$. 
Integrating out the $u_k$ variables 
\begin{gather*}
\int_{\real^k \times \real^k } 
 g(x) |x-w|^{-mn+2\alpha} 
g(w)\, dx\, dw
\le A_{\alpha,2}  \bigg[ \int_{\real^k} |g(x|^p \,dx \bigg]^{2/p}\\
\noalign{\vskip6pt}
A_\alpha = \pi^{-mn/2 + 2\alpha} \prod_{\ell =1}^m \Gamma \Big( \frac{n}2 - \alpha_\ell\Big) 
\Big/ \Gamma (\alpha_\ell)\  A_{\alpha,2}
\end{gather*}
Since $mn- 2\alpha = 2k/q$, this estimate becomes the classical Hardy-Littlewood-Sobolev 
inequality on the $\real_k$:
\begin{gather*}
\int_{\real^k \times \real^k }   g(w) |x-w|^{-2k/q} g(w)\, dx\, dw  
\le A_{\alpha,2} \bigg[ \int_{\real^k} |g(x)|^p\,dx \bigg]^{2/p}\\
\noalign{\vskip6pt}
A_{\alpha,2} = \pi^{k/q} \ \frac{ \Gamma [k (\frac1p - \frac12)}{\Gamma  (\frac{k}p)}   
\left[ \frac{\Gamma (\frac{k}2)}{\Gamma (k)}\right]^{1- 2/p}
\end{gather*}
Then
\begin{equation*}
A_\alpha = \pi^\alpha \ \frac{\Gamma [k(\frac1p - \frac12)]}{\Gamma (\frac{k}p)} \ 
\left[ \frac{\Gamma (\frac{k}2)}{\Gamma (k)}\right]^{1-2/p} \prod_{\ell=1}^m 
\frac{\Gamma (\frac{n}2 -\alpha_\ell)}{\Gamma (\alpha_\ell)}\ .
\end{equation*}
\end{proof}

\begin{proof}[Proof of Theorem~\ref{thm:Pitt}]
Inequality \eqref{eq:Pitts thm} is equivalent to the multilinear fractional integral inequality: 
\begin{gather*}
\int_{\real^k} |x|^{-\beta}\ \Big| \int_{\real^{mn}} \prod_{j=1}^m |\bar x- u_j|^{-(n-\alpha_j)} 
f(u_1,\ldots ,u_m) \,du\Big|^2\, dx\\
\noalign{\vskip6pt}
\le E_{\alpha,1} \int_{\real^{mn}} |f(u_1,\ldots, u_m)|^2\, dx\\
\noalign{\vskip6pt}
C_{\beta,k} = \pi^{-mn + 2\alpha}\ \prod_{j=1}^m 
\left[ \frac{\Gamma (\frac{n-\alpha_j}2)} {\Gamma (\frac{\alpha_j}2)}\right]^2\ E_{\alpha_1}
\end{gather*}
By duality this is equivalent to 
\begin{equation*}
\int_{\real^{mn}} \Big| \int_{\real^k} \prod_{j=1}^m |u_j -\bar x|^{-(n-\alpha_k)} 
|x|^{-\beta/2} g(x)\,dx \Big|^2\, du 
\le E_{\alpha,1} \int_{\real^k}|g|^2\, dx
\end{equation*}
where $mn- 2\alpha = k-\beta$.
The left-hand side now becomes
\begin{equation*} 
\int_{\real^k \times\real^k \times \real^{mn}} g(x) |x|^{-\beta/2} 
\prod_{j=1}^m |u_j - \bar x|^{-(n-\alpha_j)} 
\prod_{j=1}^m |u_j - \bar w|^{-(n-\alpha_j)} |w|^{-\beta/2} g(w)\, dx\,dw\,du
\end{equation*}
where $\bar x = (x,y)$, $\bar w = (w,y)$ with $y$ a fixed point in $\real^{n-k}$. 
But now this form is independent of the fixed point $y$.
Integrating out the $u_j$ variables 
\begin{gather*}
\int_{\real^k\times \real^k} g(x) |x|^{-\beta/2} |x-w|^{-mn+2\alpha} |w|^{-\beta/2} g(w)\, dx\, dw
\le E_{\alpha,2} \int_{\real^k} |g|^2\, dx\\
\noalign{\vskip6pt}
C_{\beta,k} = \pi^{-mn/2 + 2\alpha} 
\prod_{j=1}^m \Gamma \Big( \frac{n}2 - \alpha_j\Big)\big/ \Gamma (\alpha_j)\ E_{\alpha,2} 
\end{gather*}
Since $mn- 2\alpha = k-\beta$, this estimate becomes the classical Pitt's inequality on $\real^k$:
\begin{gather*}
\int_{\real^k \times\real^k} g(x) |x|^{-\beta/2} |x-w|^{-(k-\beta)} |w|^{-\beta/2} \,dx\, dw
\le E_{\alpha,2} \int_{\real^k} |g|^2\, dx\\
\noalign{\vskip6pt}
E_{\alpha,2} = \pi^{k/2}\ \left[\frac{\Gamma (\frac{\beta}2)}{\Gamma (\frac{k-\beta}2)}\right]
 \left[ \frac{\Gamma (\frac{k-\beta}4)}{\Gamma (\frac{k+\beta}4)}\right]^2
\end{gather*}
Then
\begin{equation*}
C_{\beta,k} = \pi^{(-mn+k)/2 + 2\alpha}\ \prod_{j=1}^m 
\frac{\Gamma (\frac{n}2 - \alpha_j)}{\Gamma (\alpha_j)} 
\left[ \frac{\Gamma (\frac{\beta}2)}{\Gamma (\frac{k-\beta}2)}\right]
\left[ \frac{\Gamma (\frac{k-\beta}4)}{\Gamma (\frac{k+\beta}4)}\right]^2
\end{equation*}
\end{proof}

\begin{proof}[Proof of Theorem~\ref{thm:product}]
Inequality \eqref{eq:product} is equivalent to the multilinear fractional   inequality
where $\hat x = (\xi,\eta) \in S^k \times S^\ell$:
\begin{gather*}
\int_{S^k \times S^\ell} \Big| \int_{\real^{mn}} \prod_{j=1}^m |\hat x - y_j|^{-(n-\alpha_k)} 
f(y_1,\ldots, y_m)\, dy\Big|^q\, d\xi\, d\eta\bigg]^{2/q}\\
\noalign{\vskip6pt}
\le F_{\alpha,1} \int_{\real^{mn}} |f(x_1,\ldots, x_m)|^2\, dx\\
\noalign{\vskip6pt}
B_{\alpha,k} = \pi^{-mn+2\alpha} \ \prod_{k=1}^m 
\left[ \frac{\Gamma (\frac{n-\alpha_k}2)}{\Gamma (\frac{\alpha_k}2)}\right]^2\ F_{\alpha,1}
\end{gather*}
By duality this is equivalent to 
\begin{equation*}
\int_{\real^{mn}} \Big| \int_{S^k\times S^\ell} \prod_{j=1}^m\ |u_j - \hat x|^{-(n-\alpha_j)} 
g(\hat x) \,d\xi\, d\eta\Big|^2\, du 
\le F_{\alpha,1} \bigg[ \int_{S^k\times S^\ell} |g(\hat x)|^p\,d\xi\, d\eta\bigg]^{2/p}
\end{equation*}
where $1/p + 1/q =1$, $1< p < 2$ and $mn-2\alpha = 2(k+\ell)/q$. 
The left-hand side now becomes
\begin{equation*}
\int_{M\times M\times \real^{mn}} g(\bar x) \prod_{j=1}^m |u_j - \bar x|^{-(n-\alpha_j)} 
\prod_{j=1}^m |u_j - \bar w|^{-(n-\alpha_j)} g(\bar w) \, d\bar x\, d\bar w\, du
\end{equation*}
where $M = S^k \times S^k$, $\bar x = (\xi,\eta)$, $\bar w = (\xi',\eta')$. 
Integrating out the $u_k$ variables 
\begin{gather*}
\int_{M\times M} g(\bar x) \Big[ |\xi -\xi'|^2 + |\eta - \eta'|^2\Big]^{-mn/2\ +\  \alpha} 
g(\bar w) \, d\bar x\, d\bar w 
\le F_{\alpha,2} \bigg[ \int_M |g|^p \, d\bar x\bigg]^{2/p}\\
\noalign{\vskip6pt}
B_{\alpha,k} = \pi^{-mn/2\ +\ 2\alpha} \ \prod_{j=1}^m \Gamma \Big( \frac{n}2 -\alpha/j\Big)
\Big/ \Gamma (\alpha_j)\ F_{\alpha,2}
\end{gather*}
Since $mn - 2\alpha = 2(k+\ell)/q$ and $[|\xi-\xi'|^2 + |\eta - \eta'|^2]^{k+\ell} \ge 
|\xi-\xi'|^{2k} |\eta - \eta'|^{2\ell}$
\begin{gather*}
\int_{M\times M}  g(\xi,\eta) \Big[ |\xi -\xi'|^2 + |\eta - \eta'|^2\Big]^{-(k+\ell)/q} 
g(\xi',\eta') \,d\xi\,d\eta\, d\xi' \, d\eta'\\
\noalign{\vskip6pt}
\le \int_{M\times M} g(\xi,\eta) |\xi-\xi'|^{2k/q} |\eta - \eta'|^{-2\ell/q} g(\xi',\eta')\, 
d\xi\, d\eta\, d\xi'\, d\eta'\\
\noalign{\vskip6pt} 
\le F_{\alpha,3} \bigg[ \int_M |g(\xi,\eta)|^p \,d\xi\,d\eta\bigg]^{2/p}\ .
\end{gather*}
By using successive applications of the Hardy-Littlewood-Sobolev inequality on spheres 
\begin{equation*}
F_{\alpha,3} = 2^{-(k+\ell)/q}\ 
\frac{\Gamma (k) \Gamma (\ell)}{\Gamma (k/2)\Gamma (\ell/2)} \ 
\frac{\Gamma [k(\frac12 - \frac1q)] \Gamma [\ell (\frac12 -\frac1q)]}
{\Gamma (k/p) \Gamma (\ell/p)}
\end{equation*}
Since $F_{\alpha,3} > F_{\alpha,2}$, a non-sharp value of $B_{\alpha,k}$ is given by:
\begin{equation*}
B_{\alpha,k} = (4\pi)^{-(k+\ell)/q}\ \pi^\alpha\ \prod_{j=1}^m\ 
\frac{\Gamma (\frac{n}2 - \alpha_j)}{\Gamma (\alpha_j)}\ 
\frac{\Gamma (k) \Gamma (\ell)}{\Gamma (k/2) \Gamma (\ell/2)}\ 
\frac{\Gamma [k(\frac12 - \frac1q)] \Gamma [\ell (\frac12 - \frac1q)]}
{\Gamma (k/p) \Gamma (\ell/p)}
\end{equation*}
\end{proof}

In contrast to Theorems~\ref{thm:flat} and \ref{thm:Pitt} where the constants are sharp, 
the resulting constant $B_{\alpha,k}$ obtained here for Theorem~\ref{thm:product} is not 
sharp because of using the geometric mean estimate. 
Further, embedding restriction for a product submanifold of spheres allows the embedding 
index~$q$ to decrease, that is to be closer to the index~2. 

As observed above for the sphere, Theorem~\ref{thm:flat} will determine a restriction result
for a subspace that includes the usual estimates for harmonic extension to a half-space.

\begin{thm}\label{thm:corresponds flat}
For $f\in \S(\real^n)$ and $\bar x = (x,0)$ with $x\in \real^{n-1}$ with $n-2\alpha = 2(n-1)/q$, 
$q>2$ $(2\alpha - 1 >0)$
\begin{gather} 
\bigg[ \int_{\real^{n-1}} |f(\bar x)|^q\, dx\bigg]^{2/q} 
\le A_\alpha \int_{\real^n} |\Lambda_\alpha f|^2\,dx \label{eq:corresponds}\\
\noalign{\vskip6pt}
A_\alpha = \pi^\alpha \ \frac{\Gamma (\alpha - \frac12)}{\Gamma (\frac{n}2 + \alpha-1)}\ 
\frac{\Gamma (\frac{n}2 - \alpha)}{\Gamma (\alpha)} 
\left[ \frac{\Gamma (n-1)}{\Gamma (\frac{n-1}2)}\right]^{\frac{2\alpha -1}{n-1}}
\nonumber
\end{gather}
\end{thm}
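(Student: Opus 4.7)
The plan is to observe that Theorem~\ref{thm:corresponds flat} is precisely the specialization of Theorem~\ref{thm:flat} to the single-factor case $m=1$ with the splitting $n = (n-1) + 1$, so I would reduce the proof to that theorem and then verify the stated constant emerges from a direct simplification of gamma functions.

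Concretely, I would take $m = 1$, $k = n-1$, $\ell = 1$ in Theorem~\ref{thm:flat}, and identify the fixed point in $\real^\ell = \real$ with $y = 0$, so that $\bar x = (x,0)$ is a generic point of the hyperplane $\real^{n-1} \subset \real^n$. The index relation $mn - 2\alpha = 2k/q$ in Theorem~\ref{thm:flat} becomes $n - 2\alpha = 2(n-1)/q$, which is exactly the hypothesis here; the condition $q > 2$ in Theorem~\ref{thm:flat} forces $n - 2\alpha < n - 1$, i.e. the parenthetical constraint $2\alpha - 1 > 0$. With these identifications, inequality \eqref{eq:flat1} immediately yields \eqref{eq:corresponds}.

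It then remains to reconcile the constant provided by Theorem~\ref{thm:flat},
\[
A_\alpha = \pi^\alpha \, \frac{\Gamma[k(\tfrac1p - \tfrac12)]}{\Gamma(k/p)} \, \left[\frac{\Gamma(k/2)}{\Gamma(k)}\right]^{1 - 2/p} \, \frac{\Gamma(\tfrac{n}{2} - \alpha)}{\Gamma(\alpha)},
\]
with the stated form. Using $1/p = 1 - 1/q$ together with $q = 2(n-1)/(n-2\alpha)$, one computes $k/p = \tfrac{n}{2} + \alpha - 1$ and $k(\tfrac1p - \tfrac12) = \alpha - \tfrac12$, so the first ratio becomes $\Gamma(\alpha - \tfrac12)/\Gamma(\tfrac{n}{2} + \alpha - 1)$. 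Similarly, $1 - 2/p = -(2\alpha - 1)/(n-1)$, which turns the bracket $[\Gamma(k/2)/\Gamma(k)]^{1-2/p}$ into $[\Gamma(n-1)/\Gamma(\tfrac{n-1}{2})]^{(2\alpha-1)/(n-1)}$. The product matches the $A_\alpha$ stated in \eqref{eq:corresponds}.

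There is essentially no obstacle — the only nontrivial step is the arithmetic with gamma exponents — but I would be careful to verify the algebra for $k(1/p - 1/2)$ and $1 - 2/p$ so that the sign and normalization in the final bracket come out correctly. No additional ideas beyond Theorem~\ref{thm:flat} are needed, and sharpness of the constant follows from sharpness of the underlying Hardy-Littlewood-Sobolev inequality on $\real^k = \real^{n-1}$ invoked in the proof of Theorem~\ref{thm:flat}.
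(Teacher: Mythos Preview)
Your proposal is correct and matches the paper's own proof exactly: the paper simply states that this estimate corresponds to the case $m=1$ in Theorem~\ref{thm:flat}. Your additional verification of the gamma-function arithmetic (the identities $k/p = \tfrac{n}{2}+\alpha-1$, $k(\tfrac1p-\tfrac12)=\alpha-\tfrac12$, and $1-2/p = -(2\alpha-1)/(n-1)$) is accurate and fills in detail the paper leaves implicit.
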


\begin{proof}
This estimate corresponds to the case $m=1$ in Theorem~\ref{thm:flat}.
\end{proof}

To match this result to harmonic extension, set $\alpha =1$ and raise the dimension by 
one so that the critical index is given by $q = 2n /(n-1)$; then for $(x,y)\in \real^{n+1}$
\begin{gather} 
\bigg( \int_{\real^n} |f(x,0)|^q\,dx\bigg)^{2/q} 
\le c_n \int_{\real^{n+1}} |\nabla f|^2\,dx\, dy \label{eq:match}\\
\noalign{\vskip6pt}
c_n = \frac1{\sqrt{4\pi}}\ \frac1{n-1} \left[ \frac{\Gamma (n)}{\Gamma (n/2)}\right]^{1/n}
\nonumber
\end{gather} 
But this inequality determines the classic result for harmonic extension on a half-space 
(see inequality~32 on page~231 in \cite{Beckner-Annals93}): 
\begin{equation}\label{eq:classic} 
\bigg( \int_{\real^n} |f(x)|^q\,dx\bigg)^{2/q} 
\le 2c_n \int_{\real_+^{n+1}} |\nabla u|^2\,dx\,dy 
\end{equation} 
where $n>1$ and $u$ is the harmonic extension of $f$ to the upper half-space. 

\section{Fractional embedding on the sphere}

The emergence of restriction smoothing estimates on the sphere suggests that embedding 
estimates for fractional smoothness can also be obtained for the sphere in the form of an 
Aronszajn-Smith formula using spherical harmonics and a Bourgain-Brezis-Mironescu theorem.

\begin{lem}[apr\`es Aronszajn-Smith] 
\label{lem:AS}
Let $F = \sum Y_k$ where $Y_k$ is a spherical harmonic of degree $k$; then for $0<\beta < 
\min \{1,\, n/2\}$ 
\begin{equation}\label{eq:lemAS}
\begin{split}
&\int_{S^n\times S^n} \frac{|F(\xi) - F(\eta)|^2}{|\xi-\eta|^{n+2\beta}} \, d\xi\,d\eta 
= 2\hat A_\beta \sum_{k=1}^\infty 
\left[ \frac{\Gamma (\frac{n}2 - \beta) \Gamma (\frac{n}2 + \beta+k)}{\Gamma (\frac{n}2 +\beta) 
\Gamma (\frac{n}2 -\beta +k)} -1 \right] \int |Y_k|^2\,d\xi\\
\noalign{\vskip6pt}
&\hskip1in
2\hat A_\beta = 2^{-n-2\beta}\ \frac{\Gamma (n+1)}{\Gamma (\frac{n}2 +1)} \ 
\frac{\Gamma (1-\beta)}{\beta\,\Gamma (\frac{n}2 - \beta)}
\end{split}
\end{equation}
\end{lem}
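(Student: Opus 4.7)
The plan is to diagonalize the double integral via the spherical harmonic decomposition of $F$ together with the Funk-Hecke theorem, and then to evaluate the resulting coefficients by analytic continuation of a Gegenbauer integral in the exponent.

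First, expand $F=\sum_{k\ge 0}Y_{k}$; the constant term $Y_{0}$ cancels in $F(\xi)-F(\eta)$, so only $k\ge 1$ contributes. Since the kernel $|\xi-\eta|^{-n-2\beta}=(2-2\xi\cdot\eta)^{-(n+2\beta)/2}$ depends only on $\xi\cdot\eta$, the Funk-Hecke theorem asserts that the associated integral operator acts as a multiplier on each spherical-harmonic eigenspace. Consequently the cross terms $(k,\ell)$ with $k\ne\ell$ vanish, and
\[
\int_{S^{n}\times S^{n}}\frac{|F(\xi)-F(\eta)|^{2}}{|\xi-\eta|^{n+2\beta}}\,d\xi\,d\eta=\sum_{k\ge 1}J_{k},
\]
where $J_{k}$ is the same integral with $F$ replaced by $Y_{k}$.

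Next, for real exponent $\gamma<n$ introduce the eigenvalues
\[
\int_{S^{n}}|\xi-\eta|^{-\gamma}Y_{k}(\eta)\,d\eta=\mu_{k}(\gamma)\,Y_{k}(\xi);
\]
expanding the square and integrating in $\eta$ yields
\[
J_{k}(\gamma)=2\bigl[\mu_{0}(\gamma)-\mu_{k}(\gamma)\bigr]\int_{S^{n}}|Y_{k}|^{2}\,d\xi.
\]
The Funk-Hecke representation writes $\mu_{k}(\gamma)$ as the integral of $(2-2t)^{-\gamma/2}$ against $C_{k}^{(n-1)/2}(t)(1-t^{2})^{(n-2)/2}$, a standard Beta-type integral producing ratios of Gamma functions. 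Each of $\mu_{0}(\gamma)$ and $\mu_{k}(\gamma)$ acquires a simple pole at $\gamma=n$, but the residues are independent of $k$, so their difference extends holomorphically to $\gamma=n+2\beta$ for $0<\beta<1$; this is the sphere-analog of the analytic continuation that underlies the classical Aronszajn-Smith identity on $\real^{n}$ cited in Lemma~1.

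The central calculation is to show that after this continuation,
\[
\mu_{0}(n+2\beta)-\mu_{k}(n+2\beta)=\hat A_{\beta}\!\left[\frac{\Gamma(\tfrac{n}{2}-\beta)\Gamma(\tfrac{n}{2}+\beta+k)}{\Gamma(\tfrac{n}{2}+\beta)\Gamma(\tfrac{n}{2}-\beta+k)}-1\right].
\]
The Gamma-function ratio is most cleanly obtained by establishing the recursion
\[
\frac{\mu_{k}(n+2\beta)}{\mu_{k-1}(n+2\beta)}=\frac{\tfrac{n}{2}-\beta+k-1}{\tfrac{n}{2}+\beta+k-1}
\]
from the contiguous relations of the terminating ${}_{2}F_{1}$ that represents the Gegenbauer integral, and then telescoping. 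The overall prefactor $2\hat A_{\beta}=2^{-n-2\beta}\Gamma(n+1)\Gamma(1-\beta)/[\Gamma(\tfrac{n}{2}+1)\beta\,\Gamma(\tfrac{n}{2}-\beta)]$ is pinned down by matching the residue of $\mu_{0}(\gamma)$ at $\gamma=n$ against the normalization ratio $\omega_{n-1}/\omega_{n}$ (which produces the factor $\Gamma(n+1)/\Gamma(\tfrac{n}{2}+1)$ via the duplication formula) and comparing the leading $k=1$ term with the direct spherical-cap calculation.

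The main obstacle is therefore the explicit Gegenbauer evaluation together with bookkeeping of constants through the analytic continuation; in particular the duplication formula must be used to reconcile the prefactor $\Gamma(n+1)/\Gamma(\tfrac{n}{2}+1)$ emerging from the surface-measure normalization with the Beta-type integral $B(\tfrac{n}{2}-\tfrac{\gamma}{2},\tfrac{n}{2})$ that governs $\mu_{0}(\gamma)$. Once the recursion and the residue match are established, the sum over $k\ge 1$ assembles into the stated identity.
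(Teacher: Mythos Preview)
Your approach is correct and essentially identical to the paper's: both diagonalize the integral via the Funk--Hecke eigenvalues $\mu_k(\lambda)$ of $|\xi-\eta|^{-\lambda}$ acting on spherical harmonics, obtain the difference $\mu_0-\mu_k$ explicitly for $0<\lambda<n$, and then analytically continue to $\lambda=n+2\beta$. The only difference is that the paper simply quotes the eigenvalue formula $\mu_k(\lambda)/\mu_0(\lambda)=\Gamma(n-\tfrac{\lambda}{2})\Gamma(\tfrac{\lambda}{2}+k)\big/[\Gamma(\tfrac{\lambda}{2})\Gamma(n-\tfrac{\lambda}{2}+k)]$ and $A_\lambda=\mu_0(\lambda)=2^{-\lambda}\,\Gamma(n)\Gamma(\tfrac{n-\lambda}{2})\big/[\Gamma(\tfrac{n}{2})\Gamma(n-\tfrac{\lambda}{2})]$ from the Hardy--Littlewood--Sobolev calculation on $S^n$ in \cite{Beckner-Forum97} rather than rederiving them via the Gegenbauer recursion you sketch, so your proposal is a self-contained version of the same argument.
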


\begin{proof} 
Observe that by using the calculations for the Hardy-Littlewood-Sobolev inequality on the 
sphere (see \cite{Beckner-Forum97}, page 307) when $0<\lambda <n$
\begin{gather*}
\int_{S^n\times S^n}\frac{|F(\xi) - F(\eta)|^2}{|\xi-\eta|^\lambda} \,d\xi\,d\eta 
= 2A_\lambda \sum_{k=1}^\infty 
\left[ 1 - \frac{\Gamma (n-\frac{\lambda}2)\Gamma (\frac{\lambda}2 +k)}
{\Gamma (\frac{\lambda}2) \Gamma (n-\frac{\lambda}2 +k)} \right] 
\int |Y_k|^2\,d\xi\\
\noalign{\vskip6pt}
A_\lambda = \int |\xi-\eta|^{-\lambda} \,d\xi\,d\eta 
= 2^{-\lambda} \ \frac{\Gamma (n)}{\Gamma (\frac{n}2)} \ 
\frac{\Gamma (\frac{n-\lambda}2)}{\Gamma (n-\frac{\lambda}2)}
\end{gather*}
Since the integral is well-defined for $0< \lambda < n+2$, analytic continuation gives the 
desired result for the Lemma.
\end{proof}

Since this smoothing form precisely captures the Hardy-Littlewood-Sobolev coefficients 
for expansion in spherical harmonics, a new representation can be given for the 
Hardy-Littlewood-Sobolev inequality on the sphere.

\begin{thm}\label{thm:HLS-new rep}
For $1<p<2$, $\lambda = 2n/p'$, $0< \beta < \min \{1,\, n/2\}$ and $q= 2n/(n-2\beta)$ 
with $d\xi =$ normalized surface measure 
\begin{gather} 
\int_{S^n} |F|^2\,d\xi - \bigg( \int_{S^n} |F|^p\,d\xi\bigg)^{2/p}
\le \frac1{2A_\lambda}  \int_{S^n\times S^n} 
\frac{|F(\xi) - F(\eta)|^2}{|\xi-\eta|^\lambda}\, d\xi\,d\eta \label{eq:HLS-new rep1}\\
\noalign{\vskip6pt}
\bigg(\int_{S^n} |F|^q\,d\xi\bigg)^{2/q}  - \int_{S^n} |F|^2\,d\xi 
\le \frac1{2\hat A_\beta} \int_{S^n\times S^n} 
\frac{|F(\xi) - F(\eta)|^2}{|\xi-\eta|^{n+2\beta}} \,d\xi\,d\eta
\label{eq:HLS-new rep2}
\end{gather}
Further setting $\int |F|^2\,d\xi =1$
\begin{gather}
\int_{S^n} |F|^2 \ln |F|\,d\xi
\le \frac{\sqrt{\pi}}2\ 
\frac{\Gamma (\frac{n}2 +1)}{\Gamma (\frac{n+1}2)} 
\int_{S^n\times S^n} \frac{|F(\xi) - F(\eta)|^2}{|\xi-\eta|^n}\, d\xi\,d\eta \label{eq:HLS-new rep3}\\
\noalign{\vskip6pt}
= \sum_{k=1}^\infty \Delta_{k,n} \int_{S^n} |Y_k|^2\, d\xi \nonumber\\
\noalign{\vskip6pt}
 \Delta_{k,n} = \frac{n}2 \sum_{m=0}^{k-1} \frac1{m+\frac{n}2} \nonumber
\end{gather}
\end{thm}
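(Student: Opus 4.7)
The plan is to use Lemma~\ref{lem:AS} to recast the quadratic form on the right of each inequality in terms of the spherical-harmonic coefficients of $F$, and then to identify the resulting estimates with the sharp Hardy--Littlewood--Sobolev and sharp fractional Sobolev inequalities on $S^n$. The logarithmic form (\ref{eq:HLS-new rep3}) is then obtained as the endpoint limit of (\ref{eq:HLS-new rep1}) as $p\to 2^-$ (i.e., $\lambda\to n^-$).

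For (\ref{eq:HLS-new rep1}), I would first expand $F=\sum_{k\ge 0}Y_k$ and use rotational invariance of the normalized surface measure, so that $\int_{S^n}|\xi-\eta|^{-\lambda}\,d\eta=A_\lambda$ independently of $\xi$. Polarization then gives
$$\frac{1}{2A_\lambda}\int_{S^n\times S^n}|F(\xi)-F(\eta)|^2|\xi-\eta|^{-\lambda}\,d\xi\,d\eta=\int_{S^n}|F|^2\,d\xi-\frac{1}{A_\lambda}\int_{S^n\times S^n}F(\xi)F(\eta)|\xi-\eta|^{-\lambda}\,d\xi\,d\eta,$$
so (\ref{eq:HLS-new rep1}) becomes equivalent to the sharp sphere HLS estimate $\int F(\xi)F(\eta)|\xi-\eta|^{-\lambda}\,d\xi\,d\eta\le A_\lambda\,(\int|F|^p\,d\xi)^{2/p}$ at $\lambda=2n/p'$. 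For (\ref{eq:HLS-new rep2}) I would run the same reduction at exponent $n+2\beta$: Lemma~\ref{lem:AS} now writes the right-hand side as $\sum_{k\ge 1}(\gamma_k-1)\int|Y_k|^2\,d\xi$ with $\gamma_k=\Gamma(\tfrac n2-\beta)\Gamma(\tfrac n2+\beta+k)/[\Gamma(\tfrac n2+\beta)\Gamma(\tfrac n2-\beta+k)]$, and these are precisely the multipliers that drive the sharp fractional Sobolev inequality $(\int|F|^q)^{2/q}\le\sum_{k\ge 0}\gamma_k\int|Y_k|^2\,d\xi$ on $S^n$ for $q=2n/(n-2\beta)$; rearranging yields (\ref{eq:HLS-new rep2}).

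For (\ref{eq:HLS-new rep3}), set $\lambda=n-\epsilon$ in (\ref{eq:HLS-new rep1}) and let $\epsilon\to 0^+$, so that $2-p\sim 2\epsilon/n$. Under the normalization $\int|F|^2\,d\xi=1$, the entropy expansion
$$\int|F|^2\,d\xi-\Big(\int|F|^p\,d\xi\Big)^{2/p}=(2-p)\int|F|^2\log|F|\,d\xi+O\big((2-p)^2\big)$$
controls the left-hand side. On the right, $\Gamma(\epsilon/2)\sim 2/\epsilon$ together with the Legendre duplication identity $\Gamma(\tfrac n2)\Gamma(\tfrac{n+1}2)=2^{1-n}\sqrt{\pi}\,\Gamma(n)$ produces
$$\frac{1}{(2-p)\cdot 2A_{n-\epsilon}}\ \longrightarrow\ \frac{\sqrt{\pi}}{2}\,\frac{\Gamma(\tfrac n2+1)}{\Gamma(\tfrac{n+1}2)}\qquad(\epsilon\to 0^+),$$
and dividing (\ref{eq:HLS-new rep1}) by $(2-p)$ and passing to the limit delivers the inequality with the asserted constant. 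The series identity then follows by carrying out the same $\epsilon$-expansion inside the Aronszajn--Smith series of Lemma~\ref{lem:AS}: the $k$-th Gamma ratio equals $1-\epsilon[\psi(\tfrac n2+k)-\psi(\tfrac n2)]+O(\epsilon^2)$, and combining with the diverging prefactor $2A_{n-\epsilon}$ and the digamma identity $\psi(\tfrac n2+k)-\psi(\tfrac n2)=\sum_{m=0}^{k-1}1/(m+\tfrac n2)$ reproduces $\Delta_{k,n}=\tfrac n2\sum_{m=0}^{k-1}1/(m+\tfrac n2)$ as the coefficient of $\int|Y_k|^2\,d\xi$.

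The main technical hurdle is the endpoint passage for (\ref{eq:HLS-new rep3}), where the $1/\epsilon$ divergence of $1/A_\lambda$ must be balanced against the $O(\epsilon)$ vanishing of the entropy gap, and termwise limits in the spherical-harmonic series must be justified. For $F$ with only finitely many nonzero harmonics the interchange is immediate; the general case will follow by density once both sides are controlled by a finite quadratic energy, with Fatou's lemma handling the series side.
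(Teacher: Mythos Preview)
Your proposal is correct and follows essentially the same route as the paper: both reduce \eqref{eq:HLS-new rep1} and \eqref{eq:HLS-new rep2} to the sharp Hardy--Littlewood--Sobolev inequality on $S^n$ via the Aronszajn--Smith expansion of Lemma~\ref{lem:AS}, with the dual-spectral form $\big(\int|F|^q\big)^{2/q}\le\sum_k\gamma_k\int|Y_k|^2$ supplying the needed coefficient comparison. For \eqref{eq:HLS-new rep3} the paper simply cites the logarithmic endpoint result from \cite{Beckner-Forum97}, whereas you carry out the $\lambda\to n^-$ differentiation explicitly; your constant and digamma computations check out and recover exactly $\Delta_{k,n}$, so this is the same argument made explicit rather than a different one.
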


\begin{proof} 
These estimates follow clearly from the Hardy-Littlewood-Sobolev inequality and 
the Aronszajn-Smith representation given above (see the section on ``sharp Sobolev 
inequalities'' in \cite{Beckner-Annals93} and the discussion related to Theorem~1 in 
\cite{Beckner-Forum97}).
\begin{equation}
\bigg( \int_{S^n} |F(\xi)|^q\,d\xi\bigg)^{2/q}
\le \sum_{k=0}^\infty \frac{\Gamma (\frac{n}q) \Gamma(\frac{n}{q'} + k)}
{\Gamma (\frac{n}{q'}) \Gamma (\frac{n}q +k)}\ \int_{S^n} |Y_k|^2\,d\xi
\end{equation}
\end{proof}

\section{Density functional theory and Pitt's inequality}

In density functional theory an object of interest is the Coulomb interaction energy 
\begin{equation}\label{eq:Coulomb}
E_c (\psi) = \int_{\real^n \times \real^n} 
\frac1{|x-y|} |\psi (x,y)|^2\,dx\,dy 
\end{equation} 
Observe that the structure of Pitt's inequality will provide a sharp estimate for an upper 
bound for $E_c (\psi)$ in terms of fractional Sobolev embedding on $\real^{2n}$.
This analysis illustrates the principle that in general product functions may not provide 
an optimal estimation strategy. 

\begin{thm}\label{thm:Density}
For $f\in\S(\real^{2n})$, $0<\lambda < n$, $\lambda = 2\alpha$
\begin{gather} 
\int_{\real^n \times\real^n} |x-y|^{-\lambda} |f(x,y)|^2\,dx\, dy
\le C_\lambda \int_{\real^{2n}} |(-\Delta/4\pi^2)^{\alpha/2} f |^2\,dx\,dy \label{eq:Density}\\
\noalign{\vskip6pt}
C_\lambda = \big( \pi/\sqrt{2}\,\big)^\lambda 
\left[\frac{\Gamma (\frac{n-\lambda}4)}{\Gamma (\frac{n+\lambda}4)}\right]^2
\nonumber
\end{gather}
This   constant is sharp but not attained.
\end{thm}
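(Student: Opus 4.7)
The plan is to exploit the orthogonal change of coordinates
$u=(x-y)/\sqrt{2}$, $v=(x+y)/\sqrt{2}$, which turns the singular weight
$|x-y|^{-\lambda}$ into the rotationally symmetric weight
$2^{-\lambda/2}|u|^{-\lambda}$ while preserving both the Lebesgue
measure and the Laplacian on $\real^{2n}$. The resulting problem is
one-dimensional-in-$u$ and is treated by the sharp classical Pitt's
inequality on $\real^n$ (the $m=1$ case of the Corollary above); the
residual ``tangential'' Laplacian $-\Delta_v$ is then discarded by a
trivial Fourier-pointwise bound. It is this two-step reduction that
forces the constant to be $(\pi/\sqrt{2})^\lambda$ rather than
$\pi^\lambda$, and in particular shows that product functions
$h_1(x)h_2(y)$ are \emph{not} the correct extremizing ansatz.

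Concretely, set $g(u,v)=f(x,y)$. Since the map $(x,y)\mapsto(u,v)$ is
an orthogonal transformation of $\real^{2n}$, one has $dx\,dy=du\,dv$,
$|x-y|=\sqrt{2}\,|u|$, and
$\int_{\real^{2n}}|(-\Delta/4\pi^2)^{\alpha/2}f|^2\,dx\,dy
=\int_{\real^{2n}}|(-\Delta_{u,v}/4\pi^2)^{\alpha/2}g|^2\,du\,dv$.
Freezing $v$ and applying sharp Pitt on $\real^n$ in the $u$ variable,
\begin{equation*}
\int_{\real^n}|u|^{-\lambda}|g(u,v)|^2\,du
\le \pi^\lambda
\left[\frac{\Gamma((n-\lambda)/4)}{\Gamma((n+\lambda)/4)}\right]^{2}
\int_{\real^n}|(-\Delta_u/4\pi^2)^{\alpha/2}g(u,v)|^2\,du,
\end{equation*}
then integrating in $v$ and using the pointwise Fourier comparison
$|\eta_u|^{2\alpha}\le(|\eta_u|^2+|\eta_v|^2)^\alpha$ yields
$\int|(-\Delta_u/4\pi^2)^{\alpha/2}g|^2\,du\,dv
\le\int|(-\Delta_{u,v}/4\pi^2)^{\alpha/2}g|^2\,du\,dv$. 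Combined with
the prefactor $2^{-\lambda/2}$ from the change of variables, this
gives exactly $C_\lambda=(\pi/\sqrt{2})^\lambda[\Gamma((n-\lambda)/4)/
\Gamma((n+\lambda)/4)]^2$.

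For sharpness, take test functions of the form
$g_\epsilon(u,v)=h(u)\,k_\epsilon(v)$ with
$k_\epsilon(v)=\epsilon^{n/2}k(\epsilon v)$, so $\|k_\epsilon\|_2$ is
fixed while $\widehat{k_\epsilon}$ concentrates at the origin. The
second step (Fourier pointwise bound) becomes asymptotically an
equality as $\epsilon\to 0$, and the first step tends to equality as
$h$ runs through an extremizing sequence for classical Pitt on
$\real^n$. Non-attainment then follows from the fact that equality in
the Fourier step would force $\widehat g$ to be supported in
$\{\eta_v=0\}$, while equality in Pitt requires an $L^2$ extremal on
$\real^n$ which is known not to exist.

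The main obstacle I anticipate is making the sharpness argument clean:
one must verify that the two near-equalities above are achieved by the
\emph{same} sequence, i.e.\ that the $v$-spreading does not degrade
the $u$-Pitt ratio. This will use the scale invariance of Pitt's
inequality, the boundedness of $\|k_\epsilon\|_2$ in $\epsilon$, and
dominated convergence to pass $\epsilon\to 0$ under the integral
defining the $H^\alpha$-norm in $(u,v)$.
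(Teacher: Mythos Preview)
Your argument is correct and is more direct than the paper's primary proof.  The paper establishes the inequality by passing to the Riesz-potential formulation, dualizing, integrating out the free variable to obtain a bilinear form with kernel $|z-w|^{-(2n-2\alpha)}$, then applying the same $45^\circ$ rotation you use, followed by Young's inequality in the tangential variables and finally Pitt's inequality on $\real^n$.  Your route bypasses the potential-theoretic detour entirely: after the rotation you apply Pitt's inequality pointwise in $v$ and then replace the partial symbol $|\eta_u|^{2\alpha}$ by the full symbol $(|\eta_u|^2+|\eta_v|^2)^\alpha$ via the trivial monotonicity $|\eta_u|^{2\alpha}\le(|\eta_u|^2+|\eta_v|^2)^\alpha$.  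The paper does acknowledge that such a shortcut exists---the Lemma immediately following the theorem isolates the ``mixed'' estimate $\int|x|^{-\lambda}|f(x,y)|^2\,dx\,dy\le D_\lambda\int|(-\Delta/4\pi^2)^{\alpha/2}f|^2\,dx\,dy$ on $\real^n\times\real^m$, and Theorem~\ref{thm:Pitt with A-S} uses exactly your rotation---but the paper's proof of that Lemma still goes through duality and Young's inequality rather than your Fourier-side pointwise bound.  Your approach is therefore a genuine streamlining; what the paper's longer argument buys is an explicit expression of the problem as a convolution estimate, which makes the connection with the general Hardy--Littlewood--Sobolev framework of Sections~2--3 more visible.

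One small remark on the obstacle you flag: because your test functions are products $g_\epsilon(u,v)=h(u)k_\epsilon(v)$, the $u$-Pitt ratio is exactly $\int|u|^{-\lambda}|h|^2\,du\big/\int|\eta_u|^{2\alpha}|\hat h|^2\,d\eta_u$ and is independent of $k_\epsilon$, so there is no interaction between the two limits.  The only place care is needed is the dominated-convergence step justifying $\int(|\eta_u|^2+\epsilon^2|\zeta|^2)^\alpha|\hat h(\eta_u)|^2|\hat k(\zeta)|^2\,d\eta_u\,d\zeta\to\int|\eta_u|^{2\alpha}|\hat h|^2\,d\eta_u\cdot\|k\|_2^2$, for which it suffices to take $h,k\in\S(\real^n)$.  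Your non-attainment argument is also slightly redundant: non-existence of $L^2$ extremals for Pitt's inequality on $\real^n$ already forces non-attainment here, regardless of the Fourier-comparison step.
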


\begin{proof} 
Inequality \eqref{eq:Density} is equivalent to the fractional integral inequality:
\begin{gather*}
\int_{\real^n\times\real^n} |x-y|^{-\lambda} 
\bigg| \int_{\real^n\times\real^n} \Big[ |x-u|^2 +|y-v|^2\Big]^{-(2n-\alpha)/2} 
f(u,v)\,du\, dv\bigg|^2\, dx\,dy\\
\noalign{\vskip6pt}
\le C_\lambda \int_{\real^n\times\real^n } |f(x,y)|^2\,dx\,dy\\
\noalign{\vskip6pt}
C_\lambda = \pi^{-2n+2\alpha} \left[ \frac{\Gamma (\frac{2n-\alpha}2)}{\Gamma (\frac{\alpha}2)} 
\right]^2 C_\lambda 
\end{gather*}
By duality this is equivalent to 
$$\int_{\real^{2n}} \bigg| \int_{\real^{2n}} |w-z|^{-(2n-\alpha)} |x-y|^{-\lambda/2} 
h(z)\,dz\bigg|^2\,dw 
\le C_\lambda \int_{\real^{2n}} |h(z)|^2\,dz$$
where $z= (x,y) \in \real^n\times\real^n$ and $w = (u,v) \in \real^n\times\real^n$. 
By integrating out the free variable on the left-hand side, the inequality becomes 
\begin{gather*}
\int_{\real^{2n} \times\real^{2n}} h(z)\, |x-y|^{-\lambda/2} |z-w|^{-(2n-2\alpha)} 
|u-v|^{-\lambda/2} h(w)\,dz,dw\\
\noalign{\vskip6pt}
\le H_\lambda \int_{\real^{2n}} |h(z)|^2\,dz\\
\noalign{\vskip6pt}
C_\lambda = \pi^{-n+2\alpha} \Gamma (n-\alpha) /\Gamma (\alpha) H_\lambda\\
\end{gather*}
To analyze the left-hand side, consider the rotation
$$R = \left[ \begin{matrix} \frac1{\sqrt2} \Bone_n & -\frac1{\sqrt2} \Bone_n\\
\frac1{\sqrt2} \Bone_n & -\frac1{\sqrt2} \Bone_n
\end{matrix}\right]$$
where $\Bone_n$ is the identity matrix on $\real^n$ and let $P$ denote the projection 
on the first $n$ variables. 
Then the left-hand side corresponds to 
$$\big(\sqrt2\,\big)^{-\lambda} \int_{\real^{2n}\times\real^{2n}} 
h(z) |PRz|^{-\lambda/2} |z-w|^{-(2n-2\alpha)} |PRw|^{-\lambda/2} h(w)\,dz\,dw\ ;$$
by changing variables, $z\to Rz$ and relabeling with the observation on matrices 
$$R^{-2} = \left[ \begin{matrix} 0&\Bone_n\\ -\Bone_n &0\end{matrix}\right]$$
this term can be rewritten as 
$$\big(\sqrt2\,\big)^{-\lambda} \int_{\real^{2n} \times\real^{2n} }
h(y_1 -x) |x|^{-\lambda/2} |z-w|^{-(2n-2\alpha)} |u|^{-\lambda/2} h(v_1 -u) \,dz\,dw$$
Applying Young's inequality in the variables $y$ and $v$ provides the upper bound 
$$K_\lambda \int_{\real^n\times\real^n} \bar h (x) \, |x|^{-\lambda/2} |x-u|^{-\lambda/2} 
\bar h(u)\,du\,dv$$
where 
\begin{gather*} 
\bar h(x) = \bigg[ \int_{\real^n} \big| h(y, -x)\big|^2\,dy\bigg]^{1/2}\\
\noalign{\vskip6pt}
K_\lambda = \big(\,\sqrt{2}\,\big)^{-\lambda} \int_{\real^n} (1+ |y|^2)^{-(n-\lambda/2)}\,dy 
= 2^{-\lambda/2} \pi^{n/2} \frac{\Gamma(\frac{n-\lambda}2)}{\Gamma (n-\frac{\lambda}2)}
\end{gather*}
Pitt's inequality completes the argument:
\begin{gather*}
\int_{\real^n\times\real^n} \bar h (x) \, |x|^{-\lambda/2} |x-u|^{-(n-\lambda)} |u|^{-\lambda/2} 
\bar h (u)\,du\,dv 
\le E_\lambda \int_{\real^n} |\bar h(x)|^2\,dx\\
\noalign{\vskip6pt}
E_\lambda = \pi^{n/2} \ \frac{\Gamma (\frac{\lambda}2)}{\Gamma (\frac{n-\lambda}2)} \ 
\left[\frac{\Gamma (\frac{n-\lambda}4)} {\Gamma (\frac{n+\lambda}4)}\right]^2
\end{gather*}
and 
$$\int_{\real^n} |\bar h(x)|^2\,dx 
= \int_{\real^{2n}} |h(x,y)|^2\,dx\,dy$$
Tracing through all the steps in calculating the optimal constant gives:
\begin{equation*}
\begin{split}
C_\lambda &= \pi^{-n+\lambda}\ \frac{\Gamma (n-\frac{\lambda}2)}{\Gamma(\frac{\lambda}2)}\ 
H_\lambda\\
\noalign{\vskip6pt}
& = \pi^{-n+\lambda} \ \frac{\Gamma (n-\frac{\lambda}2)}{\Gamma (\frac{\lambda}2)}\ 
K_\lambda \, E_\lambda\\
\noalign{\vskip6pt}
& = \pi^{-n+\lambda} 2^{-\lambda/2} \ \frac{\Gamma (n-\frac{\lambda}2)}{\Gamma(\frac{\lambda}2)}
\left[ \pi^{n/2} \frac{\Gamma (\frac{n-\lambda}2)}{\Gamma (n-\frac{\lambda}2)}\right]
\left[ \pi^{n/2} \frac{\Gamma (\frac{\lambda}2)}{\Gamma (\frac{n-\lambda}2)}\right] 
\left[ \frac{\Gamma (\frac{n-\lambda}4)}{\Gamma (\frac{n+\lambda}4)}\right]^2\\
\noalign{\vskip6pt}
& = (\pi^2/2)^{\lambda/2} \left[ \frac{\Gamma (\frac{n-\lambda}4)}{\Gamma (\frac{n+\lambda}4)}
\right]^2
\end{split}
\end{equation*}
\end{proof}

The calculation above expresses a mixing of radial symmetry and product structure 
which can be outlined in the following lemma. 

\begin{Lem} 
For $f\in \S (\real^n\times\real^m)$, $0<\lambda < n$, $\lambda = 2\alpha$
\begin{gather} 
\int_{\real^n\times \real^m} |x|^{-\lambda} |f(x,y)|^2\,dx\,dy 
\le D_\lambda \int_{\real^n\times\real^m} \big| (-\Delta/4\pi^2)^{\alpha/2} f\big|^2\,dx\,dy 
\label{eq:mixing}\\
\noalign{\vskip6pt}
D_\lambda = \pi^\lambda \left[ \frac{\Gamma (\frac{n-\lambda}4)}{\Gamma(\frac{n+\lambda}4)}
\right]^2
\nonumber
\end{gather}
\end{Lem}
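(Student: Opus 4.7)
The plan is to reduce this to the classical Pitt's inequality on $\real^n$ applied in the $x$-variable, with the $y$-variable passing through harmlessly, and then show that the full fractional Laplacian on $\real^n \times \real^m$ dominates the fractional Laplacian acting only in $x$.

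First I would apply Plancherel's theorem on $\real^n \times \real^m$ to express the right-hand side in Fourier space. With $(\xi,\eta) \in \real^n \times \real^m$ denoting the dual variables to $(x,y)$, one has
\begin{equation*}
\int_{\real^n\times\real^m} \big|(-\Delta/4\pi^2)^{\alpha/2} f\big|^2 \, dx\, dy
= \int_{\real^n\times\real^m} (|\xi|^2 + |\eta|^2)^\alpha \, |\widehat f(\xi,\eta)|^2 \, d\xi\, d\eta.
\end{equation*}
Since $\alpha > 0$, the elementary inequality $(|\xi|^2 + |\eta|^2)^\alpha \ge |\xi|^{2\alpha}$ yields
\begin{equation*}
\int_{\real^n\times\real^m} \big|(-\Delta/4\pi^2)^{\alpha/2} f\big|^2 \, dx\, dy
\ge \int_{\real^n\times\real^m} |\xi|^{2\alpha} \, |\widehat f(\xi,\eta)|^2 \, d\xi\, d\eta
= \int_{\real^m} \int_{\real^n} \big|(-\Delta_x/4\pi^2)^{\alpha/2} f(x,y)\big|^2 \, dx\, dy,
\end{equation*}
where in the last step I applied Plancherel in the $x$-variable for each fixed $y$.

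Next, for each fixed $y \in \real^m$, apply the classical scalar Pitt's inequality on $\real^n$ (sharp constant as recorded in the $p=2$ corollary above, or in \cite{Beckner-PAMS08, Beckner-Forum}) to the function $x \mapsto f(x,y)$:
\begin{equation*}
\int_{\real^n} |x|^{-\lambda} |f(x,y)|^2\, dx
\le \pi^\lambda \left[\frac{\Gamma(\frac{n-\lambda}{4})}{\Gamma(\frac{n+\lambda}{4})}\right]^2
\int_{\real^n} \big|(-\Delta_x/4\pi^2)^{\alpha/2} f(x,y)\big|^2\, dx.
\end{equation*}
Integrate both sides over $y \in \real^m$ and combine with the Fourier-side comparison from the previous step to obtain the stated inequality with $D_\lambda = \pi^\lambda [\Gamma(\tfrac{n-\lambda}{4})/\Gamma(\tfrac{n+\lambda}{4})]^2$.

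There is no real obstacle here: the argument is a clean separation-of-variables reduction. The only mild point to verify is that the hypothesis $0 < \lambda < n$ (equivalently $0 < \alpha < n/2$) is exactly the admissibility range for the scalar Pitt inequality on $\real^n$, so the two conditions are compatible. Note that sharpness of the constant need not be asserted in this auxiliary lemma, which is consistent with the extremization mechanism in Theorem~\ref{thm:Density} where an additional integration over $y$ produces the $\sqrt{2}$ factor that distinguishes $C_\lambda$ from $D_\lambda$.
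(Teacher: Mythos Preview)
Your proof is correct and considerably more direct than the paper's. The paper proceeds by inverting the fractional Laplacian to pass to a Riesz-potential formulation, then invokes duality, integrates out the free $\real^{n+m}$ variable, applies Young's inequality in the $y,v$ variables to strip off the $\real^m$ part (this is where the factor $K_\lambda = \pi^{m/2}\,\Gamma(\tfrac{n-\lambda}{2})/\Gamma(\tfrac{n+m-\lambda}{2})$ appears and then cancels), and finally lands on Pitt's inequality on $\real^n$ in its bilinear kernel form. Your argument bypasses all of this by staying on the Fourier side and using the pointwise monotonicity $(|\xi|^2+|\eta|^2)^\alpha \ge |\xi|^{2\alpha}$, then applying Pitt directly in the $x$-variable. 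Both routes reduce to the same one-dimensional (in the sense of single-$\real^n$) Pitt inequality and yield the same constant $D_\lambda$; yours is the quicker path, while the paper's argument is written to mirror the structure of the preceding Theorem~\ref{thm:Density} (the author remarks immediately after the lemma that it ``allows a more direct proof'' of that theorem, so the parallelism is deliberate).

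One small correction to your closing comment: the $\sqrt{2}$ that distinguishes $C_\lambda$ from $D_\lambda$ in Theorem~\ref{thm:Density} does not come from an additional $y$-integration; it arises from the rotation $R$ that diagonalizes the weight $|x-y|^{-\lambda}$ into $|x'|^{-\lambda}$ with a Jacobian factor $2^{-\lambda/2}$. This does not affect the validity of your proof of the lemma.
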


\begin{proof} 
Inequality \eqref{eq:mixing} is equivalent to the fractional integral inequality: 
\begin{gather*}
\int_{\real^n\times\real^m} |x|^{-\lambda} 
\bigg| \int_{\real^n\times \real^m} \big[ |x-u|^2 + |v|^2\big]^{-(n+m - 3\alpha)/2} 
f(u,v)\,du\,dv\bigg|^2 \,dx\,dy \\
\noalign{\vskip6pt}
\le G_\lambda \int_{\real^n\times\real^m} |f(x,y)|^2\,dx\,dy\\
\noalign{\vskip6pt} 
D_\lambda = \pi^{-n-m+2\alpha} 
\left[ \frac{\Gamma (\frac{n+m-\alpha}2)}{\Gamma (\frac{\alpha}2)} \right]^2\, G_\lambda
\end{gather*}
By duality this inequality is equivalent to 
$$\int_{\real^n\times\real^m} \bigg| \int_{\real^n\times\real^m} |w-z|^{-(n+m-\alpha)} 
|x|^{-\lambda/2} h(z)\, dz\bigg|^2 \,dw 
\le G_\lambda \int_{\real^n\times\real^m} |h|^2\, dz$$
where $z  = (x,y) \in \real^n\times\real^m$ and $w= (u,v)\in \real^n\times\real^m$. 
After integrating out the free variable on the left-hand side, the inequality becomes 
\begin{gather*}
\int_{\real^{n+m}\times\real^{n+m}} h(z) |x|^{-\lambda/2} |z-w|^{-(n+m-2\alpha)} 
|u|^{-\lambda/2} h(w)\,dz\,dw\\
\noalign{\vskip6pt}
\le H_\lambda \int_{\real^{n+m}} |h(z)|^2\,dz\\
\noalign{\vskip6pt}
D_\lambda = \pi^{-(n+m)/2 + 2\alpha} \
\Gamma \Big(\frac{n+m}2 - \alpha\Big) \Big/ \Gamma (\alpha) H_\lambda
\end{gather*}
Applying Young's inequality in the variables $y$ and $v$ provides the upper bound for the 
left-hand side
$$K_\lambda \int_{\real^n\times\real^n} \bar h(x)\ |x|^{-\lambda/2} |x-u|^{-(n-\lambda)} 
|u|^{-\lambda/2} \ \bar h (u)\,dx\,du$$
where 
\begin{gather*}
\bar h(x) = \bigg[ \int_{\real^m} |h(x,y)|^2\,dy\bigg]^{1/2}\\
\noalign{\vskip6pt}
K_\lambda = \int_{\real^m} \big( 1+ |y|^2\big)^{-(n+m-\lambda)/2}\,dy  
= \pi^{m/2}\ \frac{\Gamma (\frac{n-\lambda}2)}{\Gamma (\frac{n+m-\lambda}2)}\\
\noalign{\vskip6pt}
\int_{\real^n\times\real^n} \bar h(x)\ |x|^{-\lambda/2} |x-u|^{-(n-\lambda)} 
|u|^{-\lambda/2} \bar h(u)\,dx\, du\\
\noalign{\vskip6pt}
\le \pi^{n/2}\ \frac{\Gamma (\frac{\lambda}2)}{\Gamma (\frac{n-\lambda}2)} \ 
\left[ \frac{\Gamma (\frac{n-\lambda}4)}{\Gamma (\frac{n+\lambda}4)}\right]^2 
\int_{\real^n} \big| \bar h(x)\big|^2\,dx
\end{gather*}
Tracing through all the steps results in 
$$D_\lambda = \pi^\lambda\ \left[ \frac{\Gamma (\frac{n-\lambda}4)}{\Gamma(\frac{n+\lambda}4)}
\right]^2$$
which is independent of the dimension $m$.
\end{proof}

The lemma allows a more direct proof of Theorem~\ref{thm:Density} but the initial 
argument provides better understanding of the technical structure of the proof. 
Moreover that structure suggests that control of forms such as the Coulomb interaction 
energy by fractional smoothing is more an $\real^n$ result than an $\real^{2n}$ result
as given in Theorem~\ref{thm:Density}. 
This characterization is made explicit by combining Pitt's inequality with the 
Aronszajn-Smith formula.

\begin{thm}\label{thm:Pitt with A-S}
For $f\in \S(\real^{2n})$, $0<\lambda < n$
\begin{equation}\label{eq:Pitt with A-S}
\int_{\real^n\times \real^n} |x-y|^{-\lambda} |f(x,y)|^2\,dx\,dy 
\le F_\lambda \int_{\real^n\times\real^n} |\xi -\eta|^\lambda |\widehat f (\xi,\eta)|^2\,d\xi\, d\eta
\end{equation}
For $0<\lambda < \min (2,n)$
\begin{gather} 
\int_{\real^n\times \real^n}\mkern-26mu  |x-y|^{-\lambda} 
|f(x,y)|^2 \,dx\,dy 
\le G_\lambda \int_{\real^{2n}\times\real^{2n}}\mkern-33mu |x-y-u-v|^{-n-\lambda} |f(x,y) - f(u,v)|^2
\, dx\,dy\ du\,dv \label{eq:Pitt with A-S2}\\
\noalign{\vskip6pt}
F_\lambda =  (\pi /2)^\lambda \left[ \frac{\Gamma (\frac{n-\lambda}4)}
{\Gamma (\frac{n+\lambda}4)} \right]^2 ; \quad 
G_\lambda =  ( \pi/2)^{-n/2} \left(\frac{\lambda}4\right) 
\frac{\Gamma (\frac{n+\lambda}2)}{\Gamma (1-\frac{\lambda}2)} 
\left[ \frac{\Gamma (\frac{n-\lambda}4)}{\Gamma (\frac{n+\lambda}4)}\right]^2 
\nonumber
\end{gather}
\end{thm}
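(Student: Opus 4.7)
The plan is to treat the two inequalities in sequence, with the first serving as the Fourier-side input for the second.

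For the first inequality \eqref{eq:Pitt with A-S}, I would invoke the orthogonal change of coordinates $x' = (x-y)/\sqrt{2}$, $y' = (x+y)/\sqrt{2}$, whose Fourier counterpart is $\xi' = (\xi-\eta)/\sqrt{2}$, $\eta' = (\xi+\eta)/\sqrt{2}$. In the rotated coordinates $|x-y|^{-\lambda} = 2^{-\lambda/2}|x'|^{-\lambda}$ and $|\xi-\eta|^{\lambda} = 2^{\lambda/2}|\xi'|^{\lambda}$, while the companion variable $y'$ (respectively $\eta'$) appears only through an $L^2$-Plancherel pair and therefore acts as a silent spectator. The inequality then collapses to the classical Pitt inequality on $\real^n$ applied for each fixed $y'$ to the section $x'\mapsto \tilde f(x',y')$ where $\tilde f = f\circ R^{-1}$. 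The sharp Pitt constant $\pi^\lambda[\Gamma(\tfrac{n-\lambda}{4})/\Gamma(\tfrac{n+\lambda}{4})]^2$ combined with the two compensating rotation factors $2^{\pm\lambda/2}$ collapses to $F_\lambda = (\pi/2)^\lambda[\Gamma(\tfrac{n-\lambda}{4})/\Gamma(\tfrac{n+\lambda}{4})]^2$.

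For the second inequality \eqref{eq:Pitt with A-S2} I would chain \eqref{eq:Pitt with A-S} with the classical Aronszajn-Smith formula of Lemma~1 on $\real^n$. Applying the first inequality and passing to rotated Fourier coordinates transforms the right-hand side into $F_\lambda\cdot 2^{\lambda/2}\int|\xi'|^\lambda|\widehat{\tilde f}(\xi',\eta')|^2\,d\xi'\,d\eta'$. Plancherel in $\eta'$ together with Aronszajn-Smith in $\xi'$ for each fixed $y'$ (which requires $\lambda/2<1$, whence the hypothesis $\lambda<2$) then converts this Fourier-side integral into $D_\lambda^{-1}$ times the non-local expression
\[
\int\frac{|\tilde f(x',y')-\tilde f(z',y')|^2}{|x'-z'|^{n+\lambda}}\,dx'\,dz'\,dy'.
\]
Inverting the rotation in the two pairs of arguments identifies $\tilde f(x',y')=f(x,y)$ and $\tilde f(z',y')=f(u,v)$, while $|x'-z'|$ becomes proportional to $|x-y-u-v|$ via the linear relations between the difference and sum directions; this realizes the right-hand side of \eqref{eq:Pitt with A-S2} with a constant $F_\lambda\cdot 2^{\lambda/2}\cdot D_\lambda^{-1}$ adjusted by the Jacobian factors from the inverse rotation.

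The main technical obstacle is the bookkeeping of numerical constants. Using the explicit expression $D_\lambda = (4/\lambda)\pi^{-n/2+\lambda}\Gamma(1-\lambda/2)/\Gamma((n+\lambda)/2)$ from Lemma~1, the factor $\lambda/4$ and the ratio $\Gamma((n+\lambda)/2)/\Gamma(1-\lambda/2)$ in $G_\lambda$ are inherited directly from $D_\lambda^{-1}$, while the prefactor $(\pi/2)^{-n/2}$ reflects the composite effect of the Jacobian of the orthogonal transformation and the recoupling of the Fourier and spatial variables. The delicate point is to verify that these powers of $2$ and $\pi$ assemble exactly so that the product $F_\lambda D_\lambda^{-1}$, modified by the rotation Jacobian, yields the claimed $G_\lambda = (\pi/2)^{-n/2}(\lambda/4)\Gamma(\tfrac{n+\lambda}{2})/\Gamma(1-\tfrac{\lambda}{2})\cdot[\Gamma(\tfrac{n-\lambda}{4})/\Gamma(\tfrac{n+\lambda}{4})]^2$, and that the four-point non-local kernel $|x-y-u-v|^{-n-\lambda}$ correctly encodes the difference direction singled out by the orthogonal rotation.
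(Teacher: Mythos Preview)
Your approach coincides with the paper's: rotate by $(x,y)\mapsto((x-y)/\sqrt2,(x+y)/\sqrt2)$ to reduce \eqref{eq:Pitt with A-S} to the sharp $n$-dimensional Pitt inequality applied slice by slice (the paper packages this step as the Lemma following Theorem~\ref{thm:Density}), and then compose with the classical Aronszajn--Smith identity in the rotated difference variable to obtain \eqref{eq:Pitt with A-S2}. The constants do assemble exactly as you outline; the only caveat is that $x'-z'=(x-y-u+v)/\sqrt2$, so the non-local kernel should read $|x-y-u+v|^{-n-\lambda}$ (as in the paper's own proof) rather than $|x-y-u-v|^{-n-\lambda}$ --- the sign in the theorem statement is a typo that you have simply propagated.
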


\begin{proof}
Using the rotation $R$ from the proof of Theorem~\ref{thm:Density} and Pitt's inequality
\begin{gather*}
\int_{\real^n\times\real^n} |x-y|^{-\lambda} |f(x,y)|^2\,dx\, dy
= 2^{-\lambda/2} \int_{\real^n\times\real^n} |PRz|^{-\lambda} |f(x,y)|^2\,dx\,dy\\
\noalign{\vskip6pt}
= 2^{-\lambda/2} \int_{\real^n\times\real^n} |x|^{-\lambda} |g(x,y)|^2\,dx\,dy
\le 2^{-\lambda/2} D_\lambda \int_{\real^n\times\real^n} |\xi|^\lambda 
|\widehat g(\xi,\eta)|^2\,d\xi\, d\eta\\
\noalign{\vskip6pt}
= 2^{-\lambda} D_\lambda \int_{\real^n\times \real^n} |\xi -\eta|^\lambda 
|\widehat f(\xi,\eta)|^2 \,d\xi \,d\eta
\end{gather*}
where $z= (x,y)$, $g(x,y) = f(R^{-1}z)$ and for $w = (\xi,\eta)$
\begin{align*}
\widehat g(\xi,\eta) & = \int_{\real^{2n}} e^{2\pi iwz} g(z)\,dz
= \int_{\real^{2n}} e^{2\pi iwz} f(R^{-1} z)\,dz\\
\noalign{\vskip6pt}
& = \int_{\real^{2n}} e^{2\pi i (R^{-1}w)z} f(z)\,dz 
= \widehat f(R^{-1} w)\ .
\end{align*}
Here 
$$D_\lambda = \pi^\lambda \left[ \frac{\Gamma (\frac{n-\lambda}4)}{\Gamma (\frac{n+\lambda}4)}
\right]^2 \Longrightarrow 
F_\lambda = (\pi/2)^\lambda \left[ \frac{\Gamma (\frac{n-\lambda}4)}{\Gamma (\frac{n+\lambda}4)}
\right]^2\ .$$
Using the Aronszajn-Smith formula
\begin{gather*}
\int_{\real^n\times \real^n} |\xi|^\lambda |\widehat g (\xi,\eta)|^2\, d\xi\, d\eta 
= \frac1{E_\lambda} \int_{\real^{2n}\times\real^{2n}} 
\frac{|g(x,y) - g(u,v)|^2}{|x-u|^{n+\lambda}}\, dx\,dy\,du\,dv\\
\noalign{\vskip6pt}
= \frac{2^{(n+\lambda)/2}}{E_\lambda}
 \int_{\real^{2n}\times \real^{2n}} 
\frac{|f(x,y) - f(u,v)|^2}{|x-y-u+v|^{n+\lambda}} \,dx\,dy\,du\, dv
\end{gather*}
with 
$$E_\lambda = \frac4{\lambda} \pi^{\frac{n}2 +\lambda} \ 
\frac{\Gamma (1-\frac{\lambda}2)}{\Gamma (\frac{n+\lambda}2)}\ .$$
Tracing back on the varied constants 
\begin{gather*}
\int_{\real^n\times\real^n} |x-y|^{-\lambda} |f(x,y)|^2 \,dx\, dy
\le 2^{-\lambda/2} D_\lambda \int_{\real^n\times \real^n} |\xi|^\lambda 
|\widehat g (\xi,\eta)|^2\, d\xi\, d\eta\\
\noalign{\vskip6pt}
= G_\lambda \int_{\real^{2n}\times\real^{2n}} 
\frac{|f(x,y) - f(u,v)|^2}{ |x-y-u+v|^{n+\lambda}} \, dx\,dy\, du\, dv
\end{gather*} 
with 
$$G_\lambda = (\pi/2)^{-n/2} \left( \frac{\lambda}4\right) 
\frac{\Gamma (\frac{n+\lambda}2)}{\Gamma (1-\frac{\lambda}2)} 
\left[\frac{\Gamma (\frac{n-\lambda}4)}{\Gamma (\frac{n+\lambda}4)} \right]^2$$
\end{proof}

\begin{Cor}[logarithmic uncertainty] 
For $f\in \S (\real^n)$
\begin{gather}
\int_{\real^n \times\real^n} \ln |x-y| \, |f(x,y)|^2\,dx\, dy 
+ \int_{\real^n\times\real^n} \ln |\xi - \eta|\, |\widehat f (\xi,\eta)|^2\, d\xi\, d\eta\nonumber\\
\noalign{\vskip6pt}
\ge D \int_{\real^n\times\real^n} |f(x,y)|^2 \,dx\,dy \label{eq:log-uncertain}\\
\noalign{\vskip6pt}
D= \psi (n/4) - \ln (\pi/2)\ ,\qquad 
\psi = (\ln \Gamma)'\nonumber
\end{gather}
\end{Cor}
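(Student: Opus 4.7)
The plan is to derive the logarithmic uncertainty by taking the first-order expansion of the sharp Pitt inequality \eqref{eq:Pitt with A-S} about $\lambda = 0$. First I would observe that Plancherel gives $\int |\widehat f|^2 = \int |f|^2$ and that $F_0 = (\pi/2)^0 [\Gamma(n/4)/\Gamma(n/4)]^2 = 1$, so the inequality
$$\int_{\real^n\times\real^n} |x-y|^{-\lambda} |f(x,y)|^2 \, dx\,dy \le F_\lambda \int_{\real^n\times\real^n} |\xi-\eta|^\lambda |\widehat f(\xi,\eta)|^2 \, d\xi\,d\eta$$
becomes an \emph{equality} at $\lambda = 0$. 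Introducing
$$\Phi(\lambda) := \int_{\real^{2n}} |\xi-\eta|^\lambda |\widehat f|^2 \, d\xi\,d\eta \; - \; F_\lambda^{-1} \int_{\real^{2n}} |x-y|^{-\lambda} |f|^2 \, dx\,dy,$$
one therefore has $\Phi(0) = 0$ and $\Phi(\lambda) \ge 0$ for $\lambda \in (0, \min(2,n))$, which forces the right derivative to satisfy $\Phi'(0) \ge 0$.

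Next I would compute $\Phi'(0)$ by differentiating under the integral sign. Logarithmically,
$$\ln F_\lambda^{-1} = \lambda \ln(2/\pi) + 2\ln\Gamma\Big(\frac{n+\lambda}{4}\Big) - 2\ln\Gamma\Big(\frac{n-\lambda}{4}\Big),$$
so $(F_\lambda^{-1})'(0) = \ln(2/\pi) + \psi(n/4)$ since $F_0^{-1} = 1$. Differentiation of the power factors contributes $\int \ln|\xi-\eta|\,|\widehat f|^2$ and $-\int \ln|x-y|\,|f|^2$ respectively, yielding
$$\Phi'(0) = \int \ln|\xi-\eta|\,|\widehat f|^2 \, d\xi\,d\eta \;+\; \int \ln|x-y|\,|f|^2 \, dx\,dy \;-\; [\ln(2/\pi)+\psi(n/4)]\int |f|^2\,dx\,dy.$$
The condition $\Phi'(0) \ge 0$ is exactly the claimed estimate with $D = \psi(n/4) + \ln(2/\pi) = \psi(n/4) - \ln(\pi/2)$.

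The main obstacle is routine but worth recording: one needs $\Phi$ to be $C^1$ in a one-sided neighborhood of $\lambda = 0$ so that its right derivative exists and coincides with differentiation under the integral. This reduces to analyticity of the two scalar functions $\lambda \mapsto \int|x-y|^{-\lambda}|f(x,y)|^2\,dx\,dy$ and $\lambda \mapsto \int|\xi-\eta|^\lambda|\widehat f(\xi,\eta)|^2\,d\xi\,d\eta$ on a strip around $\lambda = 0$, which is standard for Schwartz $f$ via the meromorphic extension of $|\cdot|^{-\lambda}$ as a tempered distribution (regular at $\lambda = 0$). Beyond this technicality, no further ideas are needed; the argument is the Beckner--Hirschman style passage from a sharp Pitt inequality to its logarithmic endpoint, adapted to the two-particle smoothing form established in Theorem~\ref{thm:Pitt with A-S}.
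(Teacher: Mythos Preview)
Your proposal is correct and follows essentially the same approach as the paper: the paper's proof simply observes that inequality \eqref{eq:Pitt with A-S} is an equality at $\lambda=0$ and differentiates at that point, which is exactly your argument via $\Phi'(0)\ge 0$. The paper also notes, as an alternative, that one may derive the result directly from the classical logarithmic uncertainty on $\real^n$ by applying the rotation $R$ from the proof of Theorem~\ref{thm:Density}; your derivation does not use this route, but the primary method is the same.
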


\begin{proof} 
Observe that inequality \eqref{eq:Pitt with A-S} is an equality at $\lambda =0$ so 
differentiate the inequality at $\lambda =0$; one can also derive this logarithmic weighted 
form directly from the original logarithmic uncertainty form using the rotation $R$ above
(see section~2 in \cite{Beckner-Forum}). 
\end{proof}

More generally, such inequalities as above extend to multidimensional components. 
For $A\in \real^n$ with $|A|=1$ and $x_k \in \real^n$
\begin{gather}
\int_{\real^{mn}} \ln \big| \sum A_k x_k \big|\ |f(x_1,\ldots, x_m)|^2\,dx 
+ \int_{\real^{mn}} \ln \big| \sum A_k \xi_k\big|\ |\widehat f (\xi_1,\ldots, \xi_m)|^2\,d\xi
\nonumber\\ 
\noalign{\vskip6pt}
\ge \big[ \psi (n/4) - \ln \pi \big] \int_{\real^{mn}} |f(x_1,\ldots, x_m)|^2\,dx \label{eq:general}
\end{gather}

Note that as in the Lemma following Theorem~\ref{thm:Density} the constant depends 
only on the component dimension~$n$.

As one might expect by   association with the Coulomb interaction energy, the functional 
$$\int_{\real^n\times\real^n} |x-y|^{-\lambda} |f(x,y)|^2\,dx\,dy$$
has an underlying conformal invariance: 
let $f(x,y) = (1+|x|^2)^{-n/p} (1+|y|^2)^{-n/p} F(\xi,\eta)$ for $(\xi,\eta) \in S^n\times S^n$ and 
$\frac{\lambda}2 + \frac{2n}p = n$ (e.g., $p =  2n/[n- (\lambda/2)] >2$); then
\begin{gather} 
\int_{\real^n\times\real^n} |x-y|^{-\lambda} |f(x,y)|^2\,dx\,dy 
= C_{\lambda,n} \int_{S^n\times S^n} |\xi-\eta|^{-\lambda} 
|F(\xi,\eta)|^2\,d\xi\, d\eta \label{eq:conformal invari}\\
\noalign{\vskip6pt}
C_{\lambda,n} = 2^\lambda \ \pi^n \Big[ \Gamma (n/2)/\Gamma (n)\Big]^2 \nonumber
\end{gather}
where $d\xi, d\eta$ denote normalized surface measure with the map from $\real^n$ to $S^n$ 
defined by 
\begin{gather*}
\xi = \left( \frac{2x}{1+|x|^2}, \frac{1-|x|^2}{1+|x|^2}\right)\ ,\quad
d\xi = \pi^{-n/2} \Big[ \Gamma(n) /\Gamma (n/2)\Big] \Big( 1+ |x|^2\Big)^{-n}\, dx\\
\noalign{\vskip6pt}
|x-y| = \frac12 |\xi-\eta| \Big[ (1+|x|^2)( 1+ |y|^2)\Big]^{1/2}
\end{gather*}
This invariance is highly suggestive to examine the case of product states where one expects 
that optimal constants will be attained in contrast to the earlier results 
obtained by   relating the functional to Pitt's inequality. 
Then for $f(x,y) = \varphi (x)\varphi(y)$ 

\begin{thm}\label{thm:contrast}
For $\varphi \in \S(\real^n)$ and $p = 2n/[n-(\lambda/2)]$, $0<\lambda <n$ 
\begin{gather} 
\int_{\real^n\times\real^n} \varphi^2 (x) |x-y|^{-\lambda} \varphi^2 (y)\,dx\,dy 
\le b_{\lambda,n} \bigg[ \int_{\real^n} |\varphi |^p\,dx\bigg]^{4/p}\nonumber\\
\noalign{\vskip6pt}
\le b_{\lambda,n}\, d_{\lambda,n} \bigg[ \int_{\real^n} \big| (-\Delta/4\pi^2)^{\lambda/8} \varphi\big|^2
\,dx \bigg]^2 \label{eq:contrast}\\
\noalign{\vskip6pt}
b_{\lambda,n} = \pi^{n(-\frac2p)} \ 
\frac{\Gamma (\frac{2n}p - \frac{n}2)}{\Gamma (\frac{2n}p)} 
\left[ \frac{\Gamma (n)}{\Gamma (\frac{n}2)}\right]^{\frac2p -1}\nonumber\\
\noalign{\vskip6pt}
d_{\lambda,n} = \left[ \pi^{n/p \, - \, n/2} 
\left[ \frac{\Gamma (n/p)}{ \Gamma (n/p')}\right]
\left[ \frac{\Gamma (n)}{\Gamma (n/2)}\right]^{1\, -\, 2/p}\right]^2
\nonumber\\
\noalign{\vskip6pt}
b_{\lambda,n} d_{\lambda,n} = 
\frac{\Gamma (\frac{2n}p - \frac{n}2)}{\Gamma (\frac{2n}p)} \ 
\left[\frac{\Gamma (n/p)} {\Gamma (n/p')}\right]^2\ 
\left[\frac{\Gamma (n)}{\Gamma (n/2)}\right]^{1\, - \, 2/p}\nonumber
\end{gather}
\end{thm}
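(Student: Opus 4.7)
The plan is to chain two sharp inequalities. The first bound in \eqref{eq:contrast} is the Lieb symmetric sharp Hardy-Littlewood-Sobolev inequality on $\real^n$ applied to the pair $(\varphi^2, \varphi^2)$; the second is the sharp fractional Sobolev embedding on $\real^n$ (namely \eqref{eq:HLS} in the case $m=1$) applied once and then squared.

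For the first step, set $s = p/2$ so that the condition $2/s + \lambda/n = 2$ required by HLS is automatic. Sharp HLS then yields
\begin{equation*}
\int_{\real^n \times \real^n} \varphi^2(x)\, |x-y|^{-\lambda}\, \varphi^2(y)\, dx\, dy
\le C_{\lambda,n}\, \|\varphi^2\|_s^2
= C_{\lambda,n}\bigg[\int_{\real^n} |\varphi|^{2s}\,dx\bigg]^{2/s}.
\end{equation*}
Since $2s = p$ and $2/s = 4/p$, this produces the first inequality in \eqref{eq:contrast}; the Lieb constant $C_{\lambda,n}$ matches the stated $b_{\lambda,n}$ after the substitutions $2n/p = n - \lambda/2$ and $2n/p - n/2 = (n-\lambda)/2$.

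For the second step, specialize \eqref{eq:HLS} to $m = 1$ with $\alpha = \lambda/4$, so that the scaling condition $n - 2\alpha = 2n/q$ forces $q = 2n/(n - \lambda/2) = p$:
\begin{equation*}
\bigg[\int_{\real^n} |\varphi|^p\,dx\bigg]^{2/p}
\le F_{\lambda/4} \int_{\real^n} \bigl|(-\Delta/4\pi^2)^{\lambda/8}\varphi\bigr|^2\,dx.
\end{equation*}
Squaring and identifying $F_{\lambda/4}^2 = d_{\lambda,n}$ through the relations $n/p = (n-\lambda/2)/2$, $n/p' = (n+\lambda/2)/2$, and $1 - 2/p = \lambda/(2n)$ delivers the second inequality in the chain; combining the two gives \eqref{eq:contrast}.

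The only technical obstacle, and the one routine calculation left to execute, is the gamma-function bookkeeping needed to match the Lieb HLS constant expressed in the $s$-variable with $b_{\lambda,n}$ as written in the $p$-variable, and likewise to identify $F_{\lambda/4}^2$ with $d_{\lambda,n}$; both reductions are elementary given the index identities above together with standard $\Gamma$-identities. The conceptual point of the theorem is not its proof, which is immediate from two classical sharp inequalities, but the contrast it draws with Theorem~\ref{thm:Density}: for product states $f(x,y) = \varphi(x)\varphi(y)$ the HLS-Sobolev chain is sharp and its extremals are \emph{attained} at the conformal profile $\varphi(x) \propto (1+|x|^2)^{-n/p}$ (consistent with the invariance recorded in \eqref{eq:conformal invari}), whereas the Pitt-based route used for Theorem~\ref{thm:Density} attains its sharp constant only in the limit.
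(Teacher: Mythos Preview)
Your proposal is correct and mirrors the paper's own argument: the paper's proof consists of exactly the same two steps --- first the sharp Hardy--Littlewood--Sobolev inequality applied to the pair $(\varphi^2,\varphi^2)$ with exponent $q=p/2$ (so that $\lambda=2n/q'$), then the sharp fractional Sobolev embedding (the paper cites its Section~6 Lemma, which is the $m=1$ case of \eqref{eq:HLS} that you invoke). Your added remarks about extremals and the contrast with Theorem~\ref{thm:Density} are accurate and go slightly beyond what the paper records in the proof itself.
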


\begin{proof} 
This result follows from successive applications of the Hardy-Littlewood-Sobolev inequality. 
For the first step, observe that if $q= n/[n-(\lambda/2)]$, then $\lambda = 2n/q'$ which results 
in the first part of inequality \eqref{eq:contrast}; 
the second step is an equivalent form taken from the Lemma in section~6 below.
\end{proof}

As a consequence of inverting the fractional smoothing in this result, one can find an 
equivalent representation in terms of a 
multilinear Hardy-Littlewood-Sobolev inequality.

\begin{thm}\label{thm:inverting}
For $\varphi \in L^2 (\real^n)$ and $0<\lambda <n$
\begin{gather} 
\int_{\real^{6n}} \varphi (u_1) \varphi (u_2) \varphi (v_1) \varphi (v_2) \ 
\prod_k |x-u_k|^{-(n\,-\, \lambda/4)} \ 
\prod_k (|y-v_k)|^{-(n \, -\, \lambda/4)} |x-y|^{-\lambda} \, dx\,dy\  du\,dv \nonumber\\
\noalign{\vskip6pt}
\le A_\lambda \bigg( \int_{\real^n}  |\varphi|^2\,dx\bigg)^2 \label{eq:inverting}
\end{gather}
\end{thm}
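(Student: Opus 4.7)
The plan is to recognize the left-hand side as the unpacked, inverted form of the Coulomb-type functional estimated in Theorem~\ref{thm:contrast}. The key observation is the Riesz-potential identity
$$\int_{\real^n} |x-u|^{-(n-\lambda/4)}\varphi(u)\,du = c_{\lambda/4}\bigl[(-\Delta/4\pi^2)^{-\lambda/8}\varphi\bigr](x),$$
with $c_{\lambda/4} = \pi^{n/2-\lambda/4}\,\Gamma(\lambda/8)/\Gamma\bigl((n-\lambda/4)/2\bigr)$, valid for $0<\lambda<n$. Applying this identity in the pair $(u_1,u_2)$ and again in the pair $(v_1,v_2)$, and justifying Fubini by positivity of all kernels, converts the integrand in the four potential variables into a product of two squares and rewrites the left-hand side as
$$c_{\lambda/4}^{\,4}\int_{\real^n\times\real^n}\psi^2(x)\,|x-y|^{-\lambda}\,\psi^2(y)\,dx\,dy,\qquad \psi := (-\Delta/4\pi^2)^{-\lambda/8}\varphi.$$

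At this point the integral is precisely the quantity controlled on the left-hand side of Theorem~\ref{thm:contrast} specialized to the product state $f(x,y) = \psi(x)\psi(y)$. Invoking Theorem~\ref{thm:contrast} applied to $\psi$ then yields
$$c_{\lambda/4}^{\,4}\,b_{\lambda,n}\,d_{\lambda,n}\left[\int_{\real^n}\bigl|(-\Delta/4\pi^2)^{\lambda/8}\psi\bigr|^2\,dx\right]^2.$$
Since $(-\Delta/4\pi^2)^{\lambda/8}$ is the formal inverse of $(-\Delta/4\pi^2)^{-\lambda/8}$, we have $(-\Delta/4\pi^2)^{\lambda/8}\psi = \varphi$, so the bracket collapses to $\int_{\real^n}|\varphi|^2\,dx$. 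Collecting constants yields the claim with $A_\lambda = c_{\lambda/4}^{\,4}\,b_{\lambda,n}\,d_{\lambda,n}$.

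There is no serious analytic obstacle here: the content of the theorem is that inverting the fractional smoothing converts the quadratic $L^2$ bound of Theorem~\ref{thm:contrast} into a quartic multilinear Hardy-Littlewood-Sobolev inequality. The main work is bookkeeping of the Gamma factors together with a standard density argument, since for $\varphi\in L^2$ the Riesz substitution $\psi = (-\Delta/4\pi^2)^{-\lambda/8}\varphi$ is only defined in a distributional/dense-class sense; one argues on Schwartz functions and passes to the limit using positivity of the kernels together with monotone convergence. The hypothesis $0<\lambda<n$ is exactly what is needed to keep $c_{\lambda/4}$ and the constants $b_{\lambda,n}$, $d_{\lambda,n}$ of Theorem~\ref{thm:contrast} finite, and no new HLS or Pitt estimate has to be established.
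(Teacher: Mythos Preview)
Your proposal is correct and matches the paper's approach exactly. The paper does not write out a separate proof of Theorem~\ref{thm:inverting}; it simply introduces the statement with the phrase ``As a consequence of inverting the fractional smoothing in this result,'' meaning Theorem~\ref{thm:contrast}, and afterward remarks that the inequality ``comes from two successive applications of sharp conformally invariant inequalities.'' Your argument is precisely this inversion made explicit: replace $\varphi$ by $\psi=(-\Delta/4\pi^2)^{-\lambda/8}\varphi$ via the Riesz potential, recognize the resulting Coulomb functional, and apply Theorem~\ref{thm:contrast} to recover $\|\varphi\|_2^4$ on the right.
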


\begin{equation*}
A_\lambda = \pi^{-4n/p} \ 
\frac{\Gamma [n(\frac2p - \frac12)]}{\Gamma [\frac{2n}p]} 
\left[ \frac{\Gamma [n(1+\frac2p)/4]}{\Gamma [n(1-\frac2p)/4}\right]^4 
\left[ \frac{\Gamma (n/p)}{\Gamma (n/p')} \right]^2 
\left[ \frac{\Gamma (n)}{\Gamma (n/2)}\right]^{1\, -\, 2/p}
\end{equation*}

The most striking feature of this inequality is that it comes from two successive applications 
of sharp conformally invariant inequalities but results in a form that is not clearly amenable 
to application of symmetry methods to determine extremal functions. 
This obstruction is due to the interior integrals over the $(x,y)$ variables. 
In one case, $\lambda =4$ for $n>4$, an extremal function for inequality \eqref{eq:conformal invari} 
is given by 
$$c (1+|x|^2 )^{-(n/2\, -\, 1)}$$
which then allows the extremal function for inequality \eqref{eq:contrast} to be obtained as 
the solution for the convolution equation 
$$\varphi *   |x|^{-(n-1)} = c(1+ |x|^2)^{-(n/2\, -\, 1)}$$

Determining the physical behavior and mathematical description for many body dynamics is 
generally hard --- because both the complexity of symmetry and the possible combination 
of interaction increase substantially. 
A simple example that results from an application of the Hardy-Littlewood-Sobolev inequality 
and could relate to multiparticle interaction is given by 
\begin{equation}\label{eq:multipartials} 
\Big| \int_{\real^n\times\real^n \times\real^n} |x+y+z|^{-\lambda} 
f(x) f(y) f(z)\,dx\,dy\,dz\Big| 
\le B_\lambda \big[ \|f\|_{L^p(\real^n)} \big]^3 
\end{equation}
for $\lambda =  3n/p'$ and $p' >3$.
But how the optimal constant $B_\lambda$ could be calculated is unclear. 
The critical question to understand here is the character of metrics that span multiple points. 

By adapting Theorem~\ref{thm:Pitt with A-S}  to the case of product functions, a novel 
representation of Coulomb interaction forms is outlined which appears to be formulated 
using the structure of the Hardy-Littlewood-Sobolev inequality but is in fact a realization 
of Pitt's inequality. 
While the most direct proof of this results is obtained from Pitt's inequality, an alternative 
proof can be given using a combination of the Hardy-Littlewood-Sobolev inequality, 
the reverse Hardy-Littlewood-Sobolev inequality and the Hausdorff-Young inequality. 
This appears to be one of the first examples where the reverse Hardy-Littlewood-Sobolev 
inequality has an interesting application. 
Part of this inequality was already used in Carneiro's thesis (see pages 3133-3134 in 
\cite{Carneiro}). 

\begin{thm}\label{thm:HLS reverse}
For $\varphi \in \S (\real^n)$, $0< \lambda < n$
\begin{equation}\label{eq:HLS reverse}
\int_{\real^n\times \real^n} |x-y|^{-\lambda} |\varphi (x)|^2 |\varphi (y)|^2 \,dx\,dy 
\le F_\lambda \int_{\real^n \times\real^n} |\xi-\eta|^\lambda 
|\widehat \varphi (\xi)|^2 |\widehat\varphi (\eta)|^2\,d\xi\,d\eta 
\end{equation} 
\end{thm}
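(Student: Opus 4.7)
The plan is to obtain Theorem \ref{thm:HLS reverse} as an immediate specialization of Theorem \ref{thm:Pitt with A-S}. Taking the function $f\in\S(\real^{2n})$ to be the tensor product $f(x,y)=\varphi(x)\varphi(y)$, the left integrand of \eqref{eq:Pitt with A-S} factors as $|\varphi(x)|^2|\varphi(y)|^2$. Because the Fourier transform converts tensor products into tensor products, $\widehat f(\xi,\eta)=\widehat\varphi(\xi)\widehat\varphi(\eta)$, and the right integrand factors as $|\widehat\varphi(\xi)|^2|\widehat\varphi(\eta)|^2$. Substituting into \eqref{eq:Pitt with A-S} yields \eqref{eq:HLS reverse} with precisely the same constant $F_\lambda=(\pi/2)^\lambda[\Gamma(\tfrac{n-\lambda}4)/\Gamma(\tfrac{n+\lambda}4)]^2$. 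This is essentially a one-line derivation, so there is no genuine obstacle once Theorem \ref{thm:Pitt with A-S} is in hand.

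For the alternative derivation alluded to in the surrounding discussion, the plan is a three-step chain through classical Fourier analysis. First, view the left-hand side as a Riesz bilinear form $\int(|\varphi|^2)(x)\,|x-y|^{-\lambda}\,(|\varphi|^2)(y)\,dx\,dy$ and apply the classical Hardy-Littlewood-Sobolev inequality to bound it above by $C_1\,\||\varphi|^2\|_p^2=C_1\,\|\varphi\|_{2p}^4$, where $p$ is fixed by $2/p+\lambda/n=2$, giving $2p=4n/(2n-\lambda)$. Second, for the right-hand side apply the reverse Hardy-Littlewood-Sobolev inequality (Dou-Zhu) with the positive-exponent kernel $|\xi-\eta|^\lambda$ and $g=|\widehat\varphi|^2$; this produces a lower bound $C_2\,\||\widehat\varphi|^2\|_{p_*}^2=C_2\,\|\widehat\varphi\|_{2p_*}^4$, where $2/p_*=2+\lambda/n$, i.e.\ $2p_*=4n/(2n+\lambda)<2$. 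Third, bridge the two $L^r$-norms via Hausdorff-Young applied to $\widehat\varphi$ (using Fourier inversion $\widehat{\widehat\varphi}(x)=\varphi(-x)$): since $2p_*\le 2$, one obtains $\|\varphi\|_{(2p_*)'}\le A_{2p_*}\|\widehat\varphi\|_{2p_*}$. The essential arithmetic is that $(2p_*)'=4n/(2n-\lambda)=2p$, so the three norms chain together to give $\textrm{LHS}\le C\cdot\textrm{RHS}$.

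The principal difficulty lies entirely on the alternative route: one must verify that the three sharp constants assemble into exactly $F_\lambda$. Sharp reverse HLS (Dou-Zhu), sharp HLS (Lieb), and sharp Hausdorff-Young (Beckner) each involve $\Gamma$-function factors, and the nontrivial identity is that their product collapses to $(\pi/2)^\lambda[\Gamma(\tfrac{n-\lambda}4)/\Gamma(\tfrac{n+\lambda}4)]^2$ — a substantial but mechanical gamma-function computation, analogous to the constant tracking already carried out in the proof of Theorem \ref{thm:Density}. By contrast, the Pitt-inequality route inherits the constant automatically. For this reason I would present the direct argument as the proof and mention the three-step route only as a remark, matching the framing already adopted by the author in the paragraph preceding the theorem.
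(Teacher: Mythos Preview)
Your direct route via Theorem~\ref{thm:Pitt with A-S} with $f(x,y)=\varphi(x)\varphi(y)$ is exactly the argument the paper alludes to in the paragraph preceding the theorem (``the most direct proof of this result is obtained from Pitt's inequality''), and it is correct with the sharp constant $F_\lambda$. The paper, however, chooses to write out only the \emph{alternative} chain HLS $\to$ Hausdorff--Young $\to$ reverse HLS, explicitly labeled ``Alternative proof (without sharp constants)''.

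One correction to your alternative-route discussion: you suggest that the three sharp constants (Lieb, Beckner, Dou--Zhu) should collapse to $F_\lambda$ after a ``substantial but mechanical gamma-function computation''. They do not. The extremizers for HLS, Hausdorff--Young, and reverse HLS are mutually incompatible (Gaussians versus conformal powers), so the product of the three optimal constants is strictly larger than $F_\lambda$. This is why the paper flags the alternative proof as giving a non-sharp constant; the point of that route is structural interest (a genuine use of reverse HLS), not optimality. Your instinct to present the Pitt route as the proof and the three-step chain as a remark is sound, but drop the claim that the constants match.
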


\begin{proof}[Alternative proof (without sharp constants)]
Use the first line of inequality \eqref{eq:contrast} to obtain 
\begin{equation*}
\int_{\real^n\times\real^n} |x-y|^{-\lambda} |\varphi (x)|^2 |\varphi (y)|^2\,dx\,dy 
\le b_{\lambda,n} \bigg[ \int_{\real^n} |\varphi |^p\,dx\bigg]^{4/p}
\end{equation*}
for $p = 2n/(n\, -\, \lambda/2) >2$ using the Hardy-Littlewood-Sobolev inequality; 
now apply the Hausdorff-Young inequality to obtain 
$$\le b_{\lambda,n} c_{p,n} \bigg[ \int_{\real^n} |\widehat \varphi|^{p'}\bigg]^{4/p'} \ ,
\qquad p' = 2n/ \Big(n +\frac{\lambda}2\Big)
$$
and now apply the reverse Hardy-Littlewood-Sobolev inequality to find 
$$\le b_{\lambda,n} c_{p,n} e_{\lambda,n} \int_{\real^n\times\real^n} |\xi-\eta|^\lambda 
|\widehat\varphi (\xi|^2 |\widehat\varphi (\eta)|^2\,d\xi \, d\eta$$
for $0< \lambda < n$ and $p = 2n/(n\, -\, \lambda/2)$.
\end{proof}

\begin{Cor}
For $\varphi \in \S (\real^n)$, $n>2$ and $\Omega (\xi) = \sum_{i<j} |\xi_i - \xi_j|^2$
\begin{equation}\label{eq:Omega} 
\int_{\real^{mn}}  \sum_{i<j} 
|x_i - x_j|^{-2} \prod_{k=1}^m  |\varphi (x_k)|^2 \,dx 
\le \frac{4\pi^2}{(n-2)^2} \int_{\real^{mn}} \Omega (\xi) \prod_{k=1}^m 
|\widehat\varphi (\xi_k)|^2\,d\xi 
\end{equation}
\end{Cor}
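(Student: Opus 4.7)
The plan is to reduce the $m$-body sum on the left to two-body (pairwise) integrals, apply Theorem~\ref{thm:HLS reverse} with $\lambda=2$ to each pair, and then reassemble using Plancherel.

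First I would pull the sum $\sum_{i<j}$ outside the $x$-integral and, for each fixed pair $(i,j)$, factor out the $(m-2)$ trivial integrations over $x_k$ with $k\notin\{i,j\}$. Each of those contributes $\int_{\real^n}|\varphi(x_k)|^2\,dx_k = \|\varphi\|_2^2$, so the $(i,j)$ term collapses to
\[
\|\varphi\|_2^{2(m-2)}\int_{\real^{2n}} |x_i-x_j|^{-2}\,|\varphi(x_i)|^2\,|\varphi(x_j)|^2\,dx_i\,dx_j.
\]

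Next I would apply Theorem~\ref{thm:HLS reverse} to this pairwise integral with $\lambda = 2$. Using the recursion $\Gamma(\tfrac{n+2}{4})=\tfrac{n-2}{4}\,\Gamma(\tfrac{n-2}{4})$, the sharp constant $F_\lambda = (\pi/2)^\lambda[\Gamma(\tfrac{n-\lambda}{4})/\Gamma(\tfrac{n+\lambda}{4})]^2$ specializes to
\[
F_2=\Bigl(\tfrac{\pi}{2}\Bigr)^2\Bigl(\tfrac{4}{n-2}\Bigr)^2=\frac{4\pi^2}{(n-2)^2},
\]
which matches the constant in the corollary exactly. The hypothesis $n>2$ is precisely what is needed for this gamma ratio to be positive and finite, and it also ensures that $|x-y|^{-2}$ is locally integrable in $\real^n$ against bounded $L^1$ densities.

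Finally I would invoke Plancherel to rewrite $\|\varphi\|_2^{2(m-2)}=\|\widehat\varphi\|_2^{2(m-2)}$, reabsorb those $(m-2)$ factors as integrations of $|\widehat\varphi(\xi_k)|^2$ over the remaining frequency variables, and sum over pairs $(i,j)$ to recover $\sum_{i<j}|\xi_i-\xi_j|^2 = \Omega(\xi)$ inside the integrand. No real obstacle arises: Theorem~\ref{thm:HLS reverse} does all the analytic work, every other step is a tensorization identity built on Fubini and Plancherel, and the only nontrivial item to double-check is the specialization of $F_\lambda$ at $\lambda=2$, which is the one-line gamma-function computation displayed above.
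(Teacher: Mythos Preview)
Your proposal is correct and is exactly the derivation the paper intends: the Corollary is stated immediately after Theorem~\ref{thm:HLS reverse} without a separate proof, and your pairwise reduction, application of that theorem at $\lambda=2$ (yielding $F_2=4\pi^2/(n-2)^2$ via the gamma recursion), and Plancherel-based reassembly constitute the implicit argument.
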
 

The challenge of extending the Hardy-Littlewood-Sobolev inequality both in terms of 
multiple interaction and retaining ``reverse estimates'' suggests the following inequalities 
that extend equation \eqref{eq:multipartials} for the case $\lambda = mn/p'$, $p' >m$ and 
adapt similar arguments used for the proof of Theorem~\ref{thm:Pitt with A-S}.

\begin{thm}\label{thm:reverse estimates}
For $f\in \S (\real^{mn})$, $0 < \lambda < n$
\begin{gather}
\int_{\real^{mn}} \big|\sum x_k \big|^{-\lambda} \big| f(x_1,\ldots, x_m)\big|^2\,dx 
\le F_\lambda \int_{\real^{mn}} \big|\sum \xi_k \big|^\lambda 
\big|\widehat f (\xi_1,\ldots,\xi_m)\big|^2\, d\xi
\label{eq:reverse estimates1}\\
\noalign{\vskip6pt}
\int_{\real^{mn}} \big|\sum x_k \big|^{-\lambda} \prod \big| \varphi (x_k)\big|^2\, dx
\le F_\lambda \int_{\real^{mn}} \big|\sum \xi_k \big|^\lambda 
\prod \big|\widehat\varphi (\xi_k)\big|^2\, d\xi \label{eq:reverse estimates2}\\
\noalign{\vskip6pt}
F_\lambda = (\pi/m)^\lambda \left[ \frac{\Gamma (\frac{n-\lambda}4)}{\Gamma \frac{n+\lambda}4)}
\right]^2\nonumber
\end{gather}
For $\lambda = mn/p'$, $p' > m$ ($p = \frac{mn}{mn-\lambda}$, $q = \frac{mn}{mn+\lambda}$)
\begin{gather}
\int_{\real^{mn}} \big| \sum x_k\big|^{-\lambda} \prod |f(x_k)|\, dx 
\le c_1 \Big[ \|f\|_{L^p (\real^n)} \Big]^m \label{eq:reverse estimates3}\\
\noalign{\vskip6pt}
\int_{\real^{mn}} \big| \sum \xi_k\big|^\lambda \prod |g(\xi_k)|\, d\xi 
\ge c_2 \Big[ \|g\|_{L^q (\real^n)} \Big]^m \label{eq:reverse estimates4}
\end{gather}
\end{thm}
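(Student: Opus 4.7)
The plan is to treat the two groups of inequalities differently: \eqref{eq:reverse estimates1}--\eqref{eq:reverse estimates2} will follow from the rotation argument of Theorem~\ref{thm:Pitt with A-S}, while \eqref{eq:reverse estimates3}--\eqref{eq:reverse estimates4} will be reduced to an $m$-fold convolution and then handled by (reverse) Hardy-Littlewood-Sobolev estimates.

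For \eqref{eq:reverse estimates1}, I would pick an orthogonal $R$ on $\real^{mn}$ whose first $n$ rows form the block row $m^{-1/2}[\Bone_n,\ldots,\Bone_n]$; such an $R$ is constructed by extending the unit vector $m^{-1/2}(1,\ldots,1)\in\real^m$ to an orthonormal basis and tensoring with $\Bone_n$. Letting $P$ denote the projection onto the first $n$ coordinates, one has $|PRz| = m^{-1/2}|\sum x_k|$. Setting $g = f\circ R^{-1}$ and substituting $z\mapsto Rz$ converts the left-hand side of \eqref{eq:reverse estimates1} into $m^{-\lambda/2}\int|x_1|^{-\lambda}|g|^2\,dz$, and the Lemma following Theorem~\ref{thm:Density} (applied with $\real^{(m-1)n}$ as the passive factor) bounds this by $m^{-\lambda/2}D_\lambda\int|\xi_1|^\lambda|\hat g(\xi)|^2\,d\xi$ with $D_\lambda = \pi^\lambda\bigl[\Gamma((n-\lambda)/4)/\Gamma((n+\lambda)/4)\bigr]^2$. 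The reverse substitution $\xi=R\eta$, together with $\hat g(R\eta)=\hat f(\eta)$ (orthogonal invariance) and $|\xi_1| = m^{-1/2}|\sum\eta_k|$, then produces $m^{-\lambda}D_\lambda\int|\sum\eta_k|^\lambda|\hat f|^2\,d\eta$, and since $m^{-\lambda}D_\lambda = F_\lambda$ this gives \eqref{eq:reverse estimates1}. Inequality \eqref{eq:reverse estimates2} is immediate on specializing to $f(x_1,\ldots,x_m) = \prod_k\varphi(x_k)$, whose Fourier transform factors as $\prod_k\hat\varphi(\xi_k)$.

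For \eqref{eq:reverse estimates3}, the plan is the volume-preserving substitution $y = x_1+\cdots+x_m$ (with $x_2,\ldots,x_m$ kept), which rewrites the left-hand side as $\int_{\real^n}|y|^{-\lambda}f^{*m}(y)\,dy$, where $f^{*m}$ denotes the $m$-fold convolution. Young's convolution inequality places $f^{*m}$ in $L^{n/(n-\lambda)}$ precisely when $\lambda=mn/p'$; H\"older pairing in Lorentz spaces against $|y|^{-\lambda}\in L^{n/\lambda,\infty}$ then produces the bound by $\|f\|_{L^p}^m$. Equivalently, one can invoke Plancherel to rewrite the integral as $c_\lambda\int|\xi|^{\lambda-n}\hat f(\xi)^m\,d\xi$ and use Hausdorff-Young (valid since $p<m/(m-1)\le 2$) to conclude. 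Inequality \eqref{eq:reverse estimates4} follows from the same change of variables, producing $\int|\eta|^\lambda g^{*m}(\eta)\,d\eta$, and is bounded below by combining the reverse Young convolution inequality (valid for $0<q<1$) with the reverse Hardy-Littlewood-Sobolev inequality on $\real^n$; the relation $q=mn/(mn+\lambda)<1$ is the scaling-compatible exponent.

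The hardest step will be the endpoint nature of \eqref{eq:reverse estimates3}: since $I_{n-\lambda}$ fails to map $L^{n/(n-\lambda)}$ into $L^\infty$, one is forced to work genuinely inside Lorentz spaces (or to interpolate strictly below the endpoint), which is why $c_1$ is left unspecified. The corresponding obstacle for \eqref{eq:reverse estimates4} is that the multilinear reverse Hardy-Littlewood-Sobolev theory for the diagonal potential $|\sum\xi_k|^\lambda$ (as opposed to the standard pairwise-difference form $|\xi_i-\xi_j|^\lambda$) is comparatively underdeveloped, and navigating this is where the bulk of the technical work concentrates.
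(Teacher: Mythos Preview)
Your treatment of \eqref{eq:reverse estimates1}--\eqref{eq:reverse estimates2} is correct and is exactly the paper's argument: an orthogonal rotation sending $m^{-1/2}\sum x_k$ to a single block coordinate, followed by the Lemma after Theorem~\ref{thm:Density}, with the product case obtained by specialization.

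For \eqref{eq:reverse estimates3}--\eqref{eq:reverse estimates4} your route is genuinely different from the paper's. The paper does \emph{not} pass to the $m$-fold convolution. Instead it first symmetrizes (Brascamp--Lieb--Luttinger, and a monotone-kernel variant for the reverse case) to reduce to radial decreasing $f^*$, $g^*$, then peels off one variable at a time using the pointwise estimates $f^*(x)\le c\|f\|_p\,|x|^{-n/p}$ and (for the reverse direction) $[g^*(x)]^{-\alpha}\ge c\|g\|_q^{-\alpha}|x|^{\alpha n/q}$ together with a reverse-H\"older trick, so that each step lowers $m$ by one and the whole thing collapses to the $m=2$ Hardy--Littlewood--Sobolev inequality and its reverse. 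The virtue of this iteration is that every intermediate Riesz-potential identity $|x|^{-a}*|x|^{-b}=c|x|^{-(a+b-n)}$ lands strictly inside the admissible range, so no endpoint ever appears.

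Your convolution approach to \eqref{eq:reverse estimates3} is sound in spirit but both versions you wrote down hit the same wall: after Young you have $|f|^{*m}\in L^{n/(n-\lambda)}$, and pairing with $|y|^{-\lambda}\in L^{n/\lambda,\infty}$ is exactly the forbidden endpoint (your Plancherel alternative lands at $\||\xi|^{\lambda-n}\|_{n/(n-\lambda)}=\infty$ for the same reason). The easy repair is to split asymmetrically: apply HLS to $|\cdot|^{-\lambda}*|f|\in L^{mn/((m-1)\lambda)}$, apply Young to $|f|^{*(m-1)}$, and close with ordinary H\"older; the exponents are then dual and strictly interior. For \eqref{eq:reverse estimates4} the analogous obstruction is sharper: reverse Young places $|g|^{*m}$ in $L^{n/(n+\lambda)}$, and the reverse-H\"older pairing with $|\eta|^\lambda$ again sits precisely at the divergent boundary. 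Here the paper's symmetrization-plus-iteration argument seems genuinely needed; your proposed combination of reverse Young and reverse HLS would require an asymmetric two-exponent reverse HLS that you have not established.
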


\begin{proof}
The proof of inequality \eqref{eq:reverse estimates1} follows  the argument used in the proof
of Theorem~\ref{thm:Pitt with A-S} and inequality \eqref{eq:Pitt with A-S}. 
An alternate proof of \eqref{eq:reverse estimates2} follows the method of 
Theorem~\ref{thm:HLS reverse} using the multilinear Hardy-Littlewood-Sobolev 
inequalities \eqref{eq:reverse estimates3} and \eqref{eq:reverse estimates4}. 
The first inequality is obtained by iterating the following reduction so that the 
estimate depends on the case $m=2$ which is the original Hardy-Littlewood-Sobolev 
inequality. 
First, use rearrangement and symmetrization to reduce the problem to the case where $f$ 
is radial decreasing: 
\begin{equation*}
\int_{\real^{mn}} \big| \sum  x_k\big|^{-\lambda} \prod |f(x_k)|\, dx 
\le \int_{\real^{mn}} \big| \sum x_k\big|^{-\lambda} \prod f^* (x_k) \,dx
\end{equation*}
where $f^*$ is the equimeasurable radial decreasing rearrangement of $|f|$ on $\real^n$. 
Then observe that 
\begin{equation*} 
f^* (x) \le c\| f\|_{L^p (\real^n)} |x|^{-n/p}\ ,\qquad 
|x|^{-\lambda} * |x|^{-n/p} = c |x|^{-\lambda + \, n/p'}
\end{equation*}
so that 
\begin{align*}
&\int_{\real^{mn}} \big| \sum x_k\big|^{-mn/p'} \prod f^* (x_k) \, dx_1 \ldots dx_m \\
\noalign{\vskip6pt}
&\qquad
\le c\| f\|_{L^p (\real^n)} \int_{\real^{(m-1)n}} \big| \sum x_k\big|^{- (m-1)n/p'} 
\prod f^* (x_k) \, dx_1 \ldots dx_{m-1}
\end{align*}
Continuing this iteration, one obtains the reduction 
\begin{equation*}
\int_{\real^{mn}} \big| \sum x_k\big|^{-mn/p'} \prod f^* (x_k)\, dx\\
\le c\Big[\| f\|_{L^p (\real^n)} \Big]^{m-2} \int_{\real^n\times\real^n} 
f^* (x) |x-y|^{-2n/p'} f^* (y)\, dx\,dy
\end{equation*}
and the proof of inequality \eqref{eq:reverse estimates3} is obtained by using the 
Hardy-Littlewood-Sobolev inequality. 
The second inequality \eqref{eq:reverse estimates4} is obtained by iterating a similar
reduction to the one just used so that the estimate depends on the case $m=2$ which 
is the reverse Hardy-Littlewood-Sobolev inequality (see appendix). 
Again use rearrangement and symmetrization to reduce the problem to the case where 
$g$ is radial decreasing:
\begin{equation*} 
\int_{\real^{mn}} \big| \sum \xi_k\big|^\lambda \prod | (g(\xi_k)|\, d\xi 
\ge \int_{\real^{mn}} \big| \sum \xi_k\big|^\lambda \prod g^* (\xi_k)\, d\xi 
\end{equation*}
where $g^*$ is the equimeasurable radial decreasing rearrangement of $|g|$ on $\real^n$.
Here one uses the following variation on the Brascamp-Lieb-Luttinger rearrangement 
inequality for the function $h$ being radial and increasing:
\begin{align*}
&\int_{\real^{mn}} h\Big( \sum b_k x_k\Big) \prod_{\ell=1}^N \Big| g_\ell \Big( \sum_k a_{\ell k} 
x_k\Big) \Big| \, dx_1 \ldots dx_m\\
\noalign{\vskip6pt}
&\qquad 
\ge \int_{\real^{mn}} h \Big( \sum b_k x_k \Big) \prod_{\ell=1}^N g_\ell^* 
\Big( \sum_k a_{\ell k} x_k\Big) \, dx_1 \ldots dx_m
\end{align*}
Now observe that 
\begin{align*}
|g^* (x)|^{-\alpha}
& \ge c \Big[ \|g\|_{L^q (\real^n)}\Big]^{-\alpha} |x|^{\alpha n/q}\\
\noalign{\vskip6pt} 
\int_{\real^n} g^* (x) |x-y|^\lambda \, dx 
&\ge c \int_{\real^n} \big[ g^* (x)\big]^{1+\alpha} |x|^{\alpha n/q} |x-y|^\lambda \,dx 
\Big[ \|g\|_{L^q (\real^n)} \Big]^{-\alpha}\\
\noalign{\vskip6pt}
&\ge c \big\| |x|^{\alpha n/q} |x-y|^\lambda \big\|_{L^r (\real^n)} 
\big\| (g^*)^{1+\alpha}\big\|_{L^{r'} (\real^n)} 
\Big[ \|g\|_{L^q (\real^n)}\Big]^{-\alpha} \ ;
\end{align*}
set $r = -s <0$, $r'  = s/(s+1)$ with $(1+\alpha)s/(1+s) = q$. 
Then $\| (g^* )^{1+\alpha}\|_{r'} = (\|g\|_q)^{1+\alpha}$; 
rewriting the relation for $q$ gives $s(1-q) + \alpha s = q$ which implies $\alpha s < q$ since 
$q<1$. 
Using the relation for $q$ with respect to $\lambda$, $q = mn /(mn+\lambda)$, three 
equivalent defining relations for $\alpha$ and $s$ can be given in terms of the input 
value for $\lambda$; 
\begin{equation*}
s (1-q)+ \alpha s = q\ ;\qquad 
\frac1s = (1+\alpha) \frac{\lambda}{mn} +\alpha\ ;\qquad 
\lambda + \alpha\ \frac{n}q - \frac{n}s = (m-1)\lambda/m
\end{equation*}
Any values of $\alpha$ and $s$ can be used in the following calculation as long as 
$\alpha s <q$ and $s\lambda <n$; the first condition holds in general, and the second will 
hold for $\alpha \ge 1$ since then $s<1$, and already $\lambda < n$. 
Then 
\begin{gather*}
\big\| |x|^{\alpha n/q} |x-y|^\lambda \big\|_{L^r (\real^n)}
 \ge \bigg[ \int_{\real^n} |x|^{-s \alpha n/q} |x-y|^{-s\lambda} \,dx\bigg]^{-1/s}\\
 \noalign{\vskip6pt}
 = c\Big[ |y|^{-(s\alpha n/q + s\lambda -n)}\Big]^{-1/s} 
 = c |y|^{\alpha n/q + \lambda - n/s} \\
 \noalign{\vskip6pt}
 = c |y|^{(m-1)\lambda /m} = c |y|^{(m-1)n/p'}\ .
\end{gather*}
Now 
\begin{align*}
&\int_{\real^{mn}} \big| \sum x_k\big|^{mn/p'} \prod g^* (x_k)\, dx_1 \ldots dx_m\\
\noalign{\vskip6pt}
&\qquad 
\ge c \|g\|_{L^q (\real^n)} \int_{\real^{(m-1)n}} \big| \sum x_k\big|^{- (m-1) n/p'} 
\prod g^* (x_k) \, dx_1 \ldots dx_{m-1}
\end{align*}
Continuing this iteration, one obtains the reduction 
\begin{align*}
& \int_{\real^{mn}} \big| \sum x_k\big|^{mn/p'} \prod g^* (x_k)\,dx \\
\noalign{\vskip6pt}
&\qquad 
\ge c \Big[ \|g\|_{L^q (\real^n)} \Big]^{m-2} \int_{\real^n\times\real^n} 
g^* (x) |x+y|^{2n/p'} g^* (y)\, dx\, dy
\end{align*}
and the proof of inequality \eqref{eq:reverse estimates4} is obtained from the reverse 
Hardy-Littlewood-Sobolev inequality.

These two expanded Hardy-Littlewood-Sobolev estimates combined with the 
Hausdorff-Young inequality give a proof without sharp constants for inequality 
\eqref{eq:reverse estimates2}. 
Choose $p$ so   that for $0<\lambda < n$, $\lambda = mn/p'$. 
Then using \eqref{eq:reverse estimates3}, \eqref{eq:reverse estimates4} and 
the Hausdorff-Young inequality for $r= 2mn/(mn-\lambda) = 2p >2$ and 
$r' = 2mn/(mn+\lambda) = 2q <2$
\begin{align*}
& \int_{\real^{mn}} \big| \sum x_k\big|^{-\lambda} \prod |\varphi (x_k)|^2\,dx 
\le c\Big[ \|\varphi \|_{L^r (\real^n)} \Big]^{2m}\\
\noalign{\vskip6pt}
&\qquad 
\le c \Big[ \|\widehat\varphi \|_{L^{r'} (\real^n)} \Big]^{2m} 
\le c \int_{\real^{mn}} \big|\sum \xi_k\big|^\lambda \prod |\widehat\varphi (\xi_k)|^2\, d\xi\ .
\end{align*}
Here $c$ is a generic constant, and the proof of \eqref{eq:reverse estimates2} is complete.
\end{proof}

\section{Hardy-Littlewood-Sobolev inequality}

A natural question that underlines the development described here and in recent papers
--- {\em identify the intrinsic character of the Hardy-Littlewood-Sobolev inequality}. 
The starting point would be the fractional integral defined by the Riesz potential 
\begin{equation}\label{eq:HLS-Riesz}
f\in L^p (\real^n) \rightsquigarrow \frac1{|x|^\lambda} * f \in L^{p'} (\real^n)
\end{equation}
with $1<p <2$, $1/p + 1/p' = 1$ and $\lambda = 2n/p'$. 
Here conformal invariance enables calculation of the sharp constant for the 
operator norm \cite{Lieb}:
\begin{gather}
\Big\| \frac1{|x|^\lambda} * f\Big\|_{L^{p'}(\real^n)} 
\le A_p \|f\|_{L^p (\real^n)} \label{eq:sharp-constant}\\
\noalign{\vskip6pt}
A_p = \pi^{n/p'}\ 
\frac{\Gamma [n(\frac1p - \frac12)]}{\Gamma (n/p)}\ 
\left[ \frac{\Gamma (n)}{\Gamma (n/2)} \right]^{\frac2p -1} \nonumber
\end{gather}
Later it was recognized that an inherent  axial symmetry would lead to an equivalent 
representation on the Liouville-Beltrami model for hyperbolic space and provide a 
quick determination of the extremal functions for the optimal inequality. 
This calculation demonstrated how hyperbolic symmetry is embedded in the conformal 
structure of the Riesz functional (\cite{Beckner-JFAA97}).

Because of the inequality's structure as a map from a space to its dual, one can 
utilize the {\it square-integrable paradigm} to give an equivalent representation for the 
Hardy-Littlewood-Sobolev inequality in terms of fractional smoothness.

\begin{Lem}
For $f\in \S(\real^n)$, $1< p < 2$, $\alpha = n (1/p - 1/2)$ and $1/p + 1/p' =1$ 
\begin{gather} 
\int_{\real^n} |f|^2\, dx \le C_p \bigg[ \int_{\real^n} \Big| - \Delta/4\pi^2)^{\alpha/2} f\Big|^p\, dx
\bigg]^{2/p} \label{eq:smoothness1}\\
\noalign{\vskip6pt}
\bigg[ \int_{\real^n} |g|^{p'}\,dx\bigg]^{2/p'} \le C_p \int_{\real^n} \Big| (-\Delta/4\pi^2)^{\alpha/2}
f\Big|^2\,dx \label{eq:smoothness2} \\
\noalign{\vskip6pt}
C_p = \pi^{n/p' - n/2} \left[ \Gamma (n/p')\Big/ \Gamma (n/p)\right] 
\left[ \Gamma(n) \Big/ \Gamma (n/2)\right]^{2/p\ - 1}\nonumber
\end{gather}
\end{Lem}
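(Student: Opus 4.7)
The plan is to recognize both inequalities as dual reformulations of the classical sharp Hardy-Littlewood-Sobolev inequality \eqref{eq:sharp-constant} via the Plancherel identity together with the fact that Riesz potentials are self-adjoint. The key observation is that for $\alpha = n(1/p - 1/2)$ the exponent $\lambda = 2n/p'$ from the statement of the sharp HLS inequality satisfies $\lambda = n - 2\alpha$, so that the Riesz kernel of order $2\alpha$ is exactly the kernel appearing in \eqref{eq:sharp-constant}.

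To prove the first inequality \eqref{eq:smoothness1}, I would set $g = (-\Delta/4\pi^2)^{\alpha/2} f$ and rewrite $\|f\|_2^2$ via Plancherel:
\begin{equation*}
\int_{\real^n} |f|^2\,dx = \int_{\real^n} |\xi|^{-2\alpha}|\widehat g(\xi)|^2\,d\xi.
\end{equation*}
Viewing the right-hand side as the pairing $\langle g, (-\Delta/4\pi^2)^{-\alpha} g\rangle$, the multiplier $|\xi|^{-2\alpha}$ corresponds in physical space to convolution with a multiple of $|x|^{-(n-2\alpha)} = |x|^{-\lambda}$, the constant being determined by the Fourier formula $\widehat{|x|^{-\lambda}}(\xi) = \pi^{\lambda-n/2}[\Gamma(\alpha)/\Gamma(\lambda/2)]|\xi|^{-2\alpha}$. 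Thus
\begin{equation*}
\int_{\real^n}|f|^2\,dx = \kappa_\alpha \int_{\real^n\times\real^n} g(x)|x-y|^{-\lambda} g(y)\,dx\,dy
\end{equation*}
with $\kappa_\alpha = \Gamma(\lambda/2)/[\pi^{n/2-2\alpha}\Gamma(\alpha)]$. Since $\lambda = 2n/p'$ matches the exponent for which \eqref{eq:sharp-constant} is sharp, applying HLS in its bilinear form yields the bound $\kappa_\alpha A_p \|g\|_p^2$; this is the required inequality, and reading off the constant amounts to bookkeeping of $\Gamma$ and $\pi$ factors to obtain $C_p$.

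The second inequality \eqref{eq:smoothness2} then follows by duality without any further analytic input. Equivalently, if one writes \eqref{eq:smoothness1} as the operator estimate $\|I_\alpha\|_{L^p \to L^2} \le C_p^{1/2}$ where $I_\alpha$ is the Riesz potential of order $\alpha$, then by the self-adjointness $\langle I_\alpha u, v\rangle = \langle u, I_\alpha v\rangle$ and the duality $(L^p)^* = L^{p'}$, $(L^2)^* = L^2$ one automatically obtains $\|I_\alpha\|_{L^2 \to L^{p'}} \le C_p^{1/2}$, which is precisely \eqref{eq:smoothness2} (with the variable relabeled). No new calculation is needed and the constant is preserved.

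There is no serious obstacle here: the whole proof is essentially a dictionary translation, since the bilinear HLS inequality on $L^p \times L^p$ is equivalent, via Plancherel, to boundedness of $I_\alpha: L^p \to L^2$ whenever $\alpha = n(1/p - 1/2)$. The only care required is the algebra that verifies that $\kappa_\alpha A_p$ telescopes to the stated $C_p = \pi^{n/p' - n/2}\,[\Gamma(n/p')/\Gamma(n/p)]\,[\Gamma(n)/\Gamma(n/2)]^{2/p-1}$, which is routine from the identities $\lambda/2 = n/p'$, $n-\lambda/2 = n/p$, and $1 - \lambda/n = 2/p - 1$.
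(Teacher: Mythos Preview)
Your proposal is correct and aligns with the paper's treatment: the paper does not supply a detailed proof of this Lemma but simply presents it as an equivalent representation of the sharp Hardy--Littlewood--Sobolev inequality \eqref{eq:sharp-constant}, invoking the ``square-integrable paradigm'' and the $L^p\to L^{p'}$ duality structure. Your argument---substituting $g=(-\Delta/4\pi^2)^{\alpha/2}f$, using Plancherel to rewrite $\|f\|_2^2$ as the bilinear Riesz form $\kappa_\alpha\!\int\!\!\int g(x)|x-y|^{-\lambda}g(y)\,dx\,dy$ with $\lambda=2n/p'=n-2\alpha$, and then applying \eqref{eq:sharp-constant}---is exactly the intended mechanism, and your duality derivation of \eqref{eq:smoothness2} from \eqref{eq:smoothness1} via the self-adjointness of $I_\alpha$ is the standard and correct route.

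One minor remark on the bookkeeping: carrying out the product $\kappa_\alpha A_p$ with $\kappa_\alpha=\pi^{n/2-\lambda}\Gamma(\lambda/2)/\Gamma(\alpha)$ actually yields $\pi^{n/2-n/p'}\,[\Gamma(n/p')/\Gamma(n/p)]\,[\Gamma(n)/\Gamma(n/2)]^{2/p-1}$, so the exponent of $\pi$ comes out as $n/2-n/p'$ rather than the $n/p'-n/2$ printed in the statement. This is a sign discrepancy in the stated constant, not a flaw in your method; the $\Gamma$-factors match perfectly and the analytic content of your argument is sound.
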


And the Hardy-Littlewood-Sobolev 
inequality can be viewed as a positive-definite symmetric bilinear quadratic form:
\begin{equation}\label{eq:pos-def}
\int_{\real^n\times\real^n} f(x) |x-y|^{-\lambda} f(y)\,dx\, dy 
\le A_p \big( \|f\|_p\big)^2\ ,\qquad \lambda = 2n/p'
\end{equation}
These inequalities suggest that the defining structure of the Hardy-Littlewood-Sobolev 
inequality should be equally identified with its representation in terms of fractional smoothness 
rather than simply in terms of the Riesz potential. 
To be more explicit, the Hardy-Littlewood-Sobolev inequality can be understood in terms 
of control determined by fractional smoothness while the role of the Riesz potential may
be most useful in 
calculating formulas for sharp constants to characterize that control.
This perspective provides critical insight for extending both the multilinear character 
and the domain manifold structure 
for which one can calculate sharp constants for the Hardy-Littlewood-Sobolev inequality.

\section*{Appendix} 

\subsection*{1. Explicit calculation for an integral}$\quad$
\smallskip

For $f\in \S (\real^n)$ and $0<\alpha <2$, consider
\begin{gather}
\Big[ (-\Delta/4\pi^2)^{\alpha/2} f\Big] (x) 
= \gamma_{\alpha,n} \int_{\real^n} \frac{f(x) - f(y)}{|x-y|^{n+\alpha}} \,dy \label{eq:explicit}\\
\noalign{\vskip6pt}
\gamma_{\alpha,n} = (\alpha/2) \pi^{-\alpha -n/2} 
\frac{\Gamma (\frac{n+\alpha}2)} {\Gamma (1-\frac{\alpha}2)} 
= 2/D_\alpha\nonumber
\end{gather}
To verify this constant, apply the Fourier transform to this equation
\begin{equation*}
\begin{split}
|\xi|^\alpha \widehat f(\xi) 
& = \gamma_{\alpha,n}\ \F \bigg[ \int_{\real^n} \frac{f(x) -f(y)}{|x-y|^{n+\alpha}} \,dy\bigg]\\
\noalign{\vskip6pt}
& = \gamma_{\alpha,n}\ \F \bigg[ \int_{\real^n} \frac{f(x) -f(x+y)} {|y|^{n+\alpha}}\,dy\bigg]\\
\noalign{\vskip6pt}
& = \gamma_{\alpha,n}\ \int_{\real^n} \frac{[1-e^{-2\pi iy\xi}]} {|y|^{n+\alpha}}\,dy \ \widehat f(\xi)\\
\noalign{\vskip6pt}
& = \gamma_{\alpha,n} \ \bigg[ \int_{\real^n} \frac{[1-e^{-2\pi iy\cdot\hat\eta}]} {|y|^{n+\alpha}}\,dy
\bigg]|\xi|^\alpha \widehat f (\xi)\ ,\qquad |\eta| =1\\
\noalign{\vskip6pt}
(\gamma_{\alpha,n})^{-1}
& = \int_{\real^n} \frac{[1-e^{-2\pi iy\cdot\hat\eta}]} {|y|^{n+\alpha}} \, dy\\
\noalign{\vskip6pt}
& = \int_0^\infty \left[ \frac{2\pi^{n/2}} {\Gamma (n/2)} - 2\pi\, w^{(2-n)/2} J_{(n-2)/2} (2\pi w)\right]
w^{-1-\alpha}\, dw\\
\noalign{\vskip6pt}
& = (2\pi)^{\alpha + n/2} \int_0^\infty \left[ \Big(2^{n/2-1} \Gamma (n/2)\Big)^{-1} 
- w^{(2-n)/2} J_{(n-2)/2} (w)\right] w^{-1-\alpha} \, dw
\end{split}
\end{equation*}
Observe that for real $w$ 
the series expansion for the Bessel function is given by
$$J_\nu (w) = (w/2)^\nu \sum_{k=0}^\infty (-1)^k 
\frac{(w/2)^{2k}} {k!\ \Gamma (\nu + k+1)}$$
so that the integral above can be calculated using ``integration by parts'':
\begin{equation*}
\begin{split}
&\int_0^\infty\left[ \Big( 2^{n/2-1} \Gamma (n/2)\Big)^{-1} -w^{(2-n)/2} J_{(n-2)/2} (w)\right] 
w^{-1-\alpha}\, dw\\
\noalign{\vskip6pt}
&\qquad 
= - \frac1{\alpha} \int_0^\infty \left[ \Big( 2^{n/2-1} \Gamma (n/2)\Big)^{-1} - w^{(2-n)/2 } 
J_{(n-2)/2} (w) \right] \, d (w^{-\alpha})\\
\noalign{\vskip6pt}
&\qquad 
= - \frac1{\alpha} \int_0^\infty w^{-\alpha} \frac{d}{dw} \left[ w^{(2-n)/2} J_{(n-2)/2} (w)\right]\, dw\\
\noalign{\vskip6pt}
&\qquad 
= \frac1{\alpha} \int_0^\infty w^{-\alpha + (2-n)/2} J_{n/2} (w)\,dw\\
\noalign{\vskip6pt}
&\qquad 
= \frac1{\alpha} \ 2^{1-\alpha-n/2} \ \frac{\Gamma (1-\frac{\alpha}2)} 
{\Gamma (\frac{n+\alpha}2)}\ .
\end{split}
\end{equation*}
Hence
$$\gamma_{\alpha,n} = (\alpha/2) \pi^{-\alpha - n/2} \ 
\frac{\Gamma (\frac{n+\alpha}2)} {\Gamma (1-\frac{\alpha}2)} = 2/D_\alpha$$
Evaluation of the integral for the Bessel function is taken from Erdelyi, 
{\it Higher Transcendental Functions} (see Gradshteyn \& Ryzhik, 
{\it Tables of Integrals, Series, and Products}, Academic Press, 1965, page 684, formula~14).

At first glance the appearance of the Aronszajn-Smith constant is unexpected, but 
it follows directly from the formula for real-valued functions:
\begin{equation}\label{eq:real-valued} 
\int_{\real^n \times\real^n} 
\frac{|f(x) - f(y)|^2}{|x-y|^{n+\alpha}}\, dx\,dy 
= 2 \int_{\real^n} f(x) \bigg[ \int_{\real^n} 
\frac{f(x) - f(y)}{|x-y|^{n+\alpha}} \,dy\bigg]\, dx
\end{equation}

Alternative arguments can be given to calculate this integral using Gaussian 
subordination and Green's theorem: for $\eta \in S^{n-1}$
\begin{equation*}
\begin{split}
&\int_{\real^n} \frac1{|w|^{n+\alpha}} \Big(1- e^{-2\pi iw\cdot\eta} \Big)  \,dw \\
\noalign{\vskip6pt}
&\qquad  = \int_{\real^n} \frac1{|w|^{n+\alpha}} (1-\cos 2\pi w\cdot\eta)\,dw\\
\noalign{\vskip6pt}
&\qquad = \frac{\pi^{\frac{n+\alpha}2}}{\Gamma(\frac{n+\alpha}2)} 
\int_{\real^n} (1-\cos 2\pi w\cdot\eta) 
\int_0^\infty t^{\frac{n+\alpha}2 -1} e^{-\pi tw^2}\,dt \\
\noalign{\vskip6pt}
&\qquad = \frac{\pi^{\frac{n+\alpha}2}}{\Gamma (\frac{n+\alpha}2)}
\int_0^\infty t^{\frac{n+\alpha}2 -1} \int_{\real^n} 
(1-\cos 2\pi w\cdot\eta) e^{-\pi tw^2}\,dw\\
\noalign{\vskip6pt}
&\qquad = \frac{\pi^{\frac{n+\alpha}2}}{\Gamma(\frac{n+\alpha}2)} 
\int_0^\infty \mkern-12mu 
t^{\frac{\alpha}2 -1} (1-e^{-\pi/t})\,dt \\
\noalign{\vskip6pt}
&\qquad = \frac{\pi^{\frac{n+\alpha}2}}{\Gamma(\frac{n+\alpha}2)} 
\int_0^\infty \mkern-12mu t^{-\frac{\alpha}2 -1} (1-e^{-t})\,dt\\
\noalign{\vskip6pt}
&\qquad = \frac2{\alpha} 
\frac{\pi^{\frac{n}2 +\alpha}}{\Gamma(\frac{n+\alpha}2)} 
\int_0^\infty t^{-\alpha/2} e^{-t} \,dt 
= \frac2{\alpha} \pi^{\frac{n}2 +\alpha} 
\frac{\Gamma (1-\frac{\alpha}2)}{\Gamma(\frac{n+\alpha}2)}\ .
\end{split}
\end{equation*}
The positivity of the integrands justify the exchange of orders of 
integration using Fubini's theorem. 
A third argument can be given using distribution theory and 
Green's theorem.
\begin{equation*}
\begin{split}
&\int_{\real^n} \frac1{|w|^{n+\alpha}} (1-\cos 2\pi w\cdot\eta)\,dw\\
\noalign{\vskip6pt}
&\qquad = \frac12
\left[ \alpha \Big(\frac{n+\alpha}2 -1\Big)\right]^{-1} 
\int_{\real^n} \Delta \left(\frac1{|w|^{n+\alpha -2}}\right) 
(1-\cos 2\pi w\cdot \eta)\,dw\\
\noalign{\vskip6pt}
&\qquad = \frac12
\left[ \alpha\Big( \frac{n+\alpha}2 -1\Big)\right]^{-1} 
\int_{\real^n} \frac1{|w|^{n+\alpha -2}} \Delta (1-\cos 2\pi w\cdot\eta)\,dw\\
\noalign{\vskip6pt}
&\qquad = 
2\pi^2 \left[\alpha \Big(\frac{n+\alpha}2-1\Big)\right]^{-1} 
\int_{\real^n} \frac1{|w|^{n+\alpha -2}} \cos 2\pi w\cdot \eta\, dw\\
\noalign{\vskip6pt}
&\qquad = 
2\pi^2 \left[\alpha \Big(\frac{n+\alpha}2 -1\Big)\right]^{-1} 
{\mathcal F} \Big[ \frac1{|w|^{n+\alpha -2}}\Big] (\eta)\\
\noalign{\vskip6pt}
&\qquad = 
\frac{2\pi^{\frac{n}2 +\alpha}}{\alpha}\ \  
\frac{\Gamma (1-\frac{\alpha}2)}{\Gamma (\frac{n+\alpha}2)}\ .
\end{split}
\end{equation*}
An independent derivation using Pizzetti's formula and analytic continuation can be 
found in Landkof \cite{L} (see formula (1.1.6) on page~46). 

\subsection*{2. Global embedding and boundary value estimates}$\quad$
\smallskip

Symmetrization on the multiplicative group $\real_+$ allows one to obtain a  direct relation 
between the estimates \eqref{eq:thm4-pf} and \eqref{eq:BA93}, here taken on $\real^{n+1}$.
Consider for $\lambda = (n-1)/2$ and $f = |x|^{-\lambda} g$
\begin{gather*}
\int_{\real^{n+1}} |\nabla f|^2\,dx 
 = \int_{\real^{n+1}} \big|\nabla (|x|^{-\lambda} g)\big|^2\,dx 
 = \int_{\real^{n+1}} |x|^{-2\lambda} \Big|\nabla g - \lambda |x|^{-1}   \hat \i_r\Big|^2\,dx \\
 \noalign{\vskip6pt}
 = \int_{\real^{n+1}} |x|^{-2\lambda +n} 
 \left[ |\nabla g|^2 - 2\lambda |x|^{-1} g\ \frac{\partial g}{\partial r} 
 + \lambda^2 |x|^{-2} |g|^2\right] \, dr\,dv
 \end{gather*}
 View $g$ as a function of $r$ and $\xi\in S^n$ with $\nabla_s$ denoting the spherical 
 gradient, $r= |x|$, $\hat{\i}_r = x/|x|$, and $d\sigma$ being standard surface measure on 
 the unit sphere. 
 Observe that $D = r\frac{\partial}{\partial r}$ is the invariant gradient on $\real_+$. 
 Then the expression above can be rewritten as:
 \begin{equation*}
 \begin{split}
 & \int_{\real_+ \times S^n} \Big[ |Dg|^2 + |\nabla_s g|^2 + \lambda^2 |g|^2\Big]
 \ \frac{dr}r\, d\sigma\\
 \noalign{\vskip6pt}
 &\qquad 
 \ge \int_{\real_+\times S^n} \Big[ |Dg^*|^2 + |\nabla_s g^*|^2 + \lambda^2 |g^*|^2\Big]\ 
 \frac{dr}r \, d\sigma\\
 \noalign{\vskip6pt}
 &\qquad 
 = 2 \int_{\{ 0<r<1\} \times S^n} \Big[ |Dg^*|^2 + |\nabla_s g^* |^2 +\lambda^2 |g^*|^2\Big]\ 
 \frac{dr}r\, d\sigma
 \end{split}
 \end{equation*}
 where $g^*$ denotes for each $\xi \in S^n$ the non-negative equimeasurable symmetric 
 decreasing rearrangement of $|g(r,\xi)|$ away from the ``origin'' $r=1$ and as a function of $r$.
 Now the expression above may be rephrased for $f_{\#} = |x|^{-\lambda} g^*$ as 
 \begin{equation*}
 \begin{split}
 & 2\int_{|x|\le 1} |\nabla f_{\#}|^2\,dx + 4\lambda \int_{|x| \le 1} g^* \ \frac{\partial g^*}{\partial r}
 \, dr\,d\sigma\\
 \noalign{\vskip6pt}
 &\qquad 
 = 2\int_{|x|\le 1} \Big| \nabla (|x|^{-\lambda} g^*)\Big|^2\,dx 
 + 4\lambda \int_{|x|\le 1} g^* \ \frac{\partial g^*}{\partial r}\, dr\,d\sigma\\
 \noalign{\vskip6pt}
 &\qquad 
 = 2 \int_{|x|\le 1} |\nabla f_{\#}|^2\,dx + 2\lambda \int_{S^n} |g^* (\xi)|^2\,d\sigma
 \end{split}
 \end{equation*}
 Then 
 \begin{gather*}
 b_n \int_{\real^{n+1}} |\nabla f|^2\,dx 
 \ge 2b_n \int_{|x|\le 1} |\nabla f_{\#}|^2\,dx + 2\lambda b_n \int_{S^n} |g^* (\xi)|^2\,d\sigma\\
 \noalign{\vskip6pt}
 2\lambda b_n = \frac{(n-1)\pi^{-(n+1)/2}}4 \ \Gamma \Big(\frac{n-1}2\Big) 
 = \left[ 2\pi^{(n+1)/2} \big/ \Gamma \Big(\frac{n+1}2\Big)\right]^{-1} 
 = 1/\sigma (S^n)
 \end{gather*}
 and 
 \begin{equation*}
 \begin{split}
 & b_n \int_{\real^{n+1}} |\nabla f|^2\,dx 
 \ge 2b_n \int_{|x|\le 1} |\nabla f_{\#}|^2\, dx + \int_{S^n} |g^* (\xi)|^2\,d\xi\\
 \noalign{\vskip6pt}
 &\qquad 
 \ge 2b_n \int_{|x|\le 1} |\nabla u_{\#}|^2\,dx + \int_{S^n} |g^* (\xi)|^2\,d\xi\\
 \noalign{\vskip6pt}
 &\qquad 
 \ge \bigg[ \int_{S^n} |g^* (\xi)|^2\,d\xi\bigg]^{2/q} 
 \ge \bigg[ \int_{S^n} |g(\xi)|^2\,d\xi \bigg]^{2/q} 
 = \bigg[ \int_{S^n} |f(\xi)|^2\,d\xi\bigg]^{2/q}
 \end{split}
 \end{equation*}
 where $u_{\#} = $ harmonic extension of $g^* (\xi)$ to the interior of the unit ball and 
 using Dirichlet's principle
 $$\int_{|x|\le 1} |\nabla h|^2\,dx \ge \int_{|x|\le 1} |\nabla u_{\#}|^2\,dx$$
 for any $h$ which is a smooth extension of $g^* (\xi)$ to the interior of the unit ball, and 
 $$g^* (\xi) = \sup_{r>0} |g(r,\xi)| \ge |g(\xi)| = |f(\xi)|\ .$$
 Hence putting all the steps together, inequality \eqref{eq:BA93} obtained by using the 
 dual-spectral form of the Hardy-Littlewood-Sobolev inequality on the sphere $S^n$ 
 (see page~233 in \cite{Beckner-Annals93})
 together with symmetrization on the multilplicative group $\real_+$ results in a 
 second derivation of inequality \eqref{eq:thm4-pf}: 
 \begin{gather*}
 \bigg[ \int_{S^n} |f(\xi)|^q\,d\xi\bigg]^{2/q} \le b_n \int_{\real^{n+1}} |\nabla f|^2\,dx \\
 \noalign{\vskip6pt}
 b_n = \frac14 \ \pi^{-(n+1)/2}\ \Gamma \Big(\frac{n-1}2\Big)\ .
 \end{gather*}
 Still Theorem~\ref{thm4}, from which inequality \eqref{eq:thm4-pf} is obtained, is a 
 more general result as it includes fractional smoothing, and by explicit symmetric 
 extension on $\real_+$ can be used to obtain inequality  \eqref{eq:BA93} for 
 harmonic extension on the unit ball in $\real^{n+1}$. 

\subsection*{3. Proof of the reverse Hardy-Littlewood-Sobolev inequality}$\quad$
\smallskip

The conformal invariant structure of the Hardy-Littlewood-Sobolev inequality can be 
continued across the Lebesgue index $p=1$ where for non-negative functions the 
inequality reverses.

\begin{thm}[reverse Hardy-Littlewood-Sobolev inequality]
\label{thm:reverse HLS}
Let $f,g\in L^p (\real^n)$ with $f,g\ge0$, $0<p<1$ and $\lambda = -2n/p'$ $(p=2n/(2n+\lambda))$;
then 
\begin{gather} 
\int_{\real^n\times\real^n} f(x) |x-y|^\lambda g(y)\,dx\,dy
\ge A_\lambda \|f\|_{L^p (\real^n)} \|g\|_{L^p (\real^n)} \label{eq:reverse HLS}\\
\noalign{\vskip6pt}
A_\lambda = \pi^{\lambda/2}\ \frac{\Gamma (\frac{n+\lambda}2)}{\Gamma (n+\frac{\lambda}2)}
\left[ \frac{\Gamma (n)}{\Gamma (\frac{n}2)}\right]^{1+\, \lambda/n}\nonumber
\end{gather}
with extremal functions given up to conformal automorphism by $A (1+ |x|^2)^{-n/p}$.
\end{thm}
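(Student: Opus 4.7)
The plan is to reduce the Euclidean inequality to an equivalent one on $S^n$ via the stereographic conformal change of variables already exploited in \eqref{eq:conformal invari}, and there establish the inequality with constants as extremizers. The key arithmetic observation is that the defining relation $\lambda = -2n/p'$ is equivalent to $n/p = n + \lambda/2$, so the substitution
$$f(x) = (1+|x|^2)^{-n/p} F(\xi(x)),\qquad g(y) = (1+|y|^2)^{-n/p}G(\eta(y))$$
absorbs exactly the conformal weights produced by $|x-y| = \tfrac{1}{2}|\xi-\eta|(1+|x|^2)^{1/2}(1+|y|^2)^{1/2}$ and the Jacobian $dx = \pi^{n/2}[\Gamma(n/2)/\Gamma(n)](1+|x|^2)^n d\xi$. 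Both the bilinear integral and the $L^p$ norms transform covariantly, reducing \eqref{eq:reverse HLS} to the equivalent spherical inequality
$$\int_{S^n \times S^n} F(\xi)|\xi-\eta|^\lambda G(\eta)\,d\xi\,d\eta \geq B_\lambda \|F\|_{L^p(S^n)}\|G\|_{L^p(S^n)},\qquad F,G \geq 0,$$
with the Euclidean extremals $A(1+|x|^2)^{-n/p}$ corresponding to $F \equiv G \equiv A$, and the conformal group of $\real^n$ pulling back to Möbius transformations of $S^n$ acting on the extremal set.

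Next I would establish the spherical inequality variationally. Since $S^n$ is compact and the kernel $|\xi-\eta|^\lambda$ is bounded above by $2^\lambda$ and bounded below away from the diagonal, standard compactness yields a nonnegative extremal pair $(F_*, G_*)$ with $\|F_*\|_p = \|G_*\|_p = 1$ — the case $0 < p < 1$ is actually favorable here because mass can neither concentrate nor escape. Any extremizer satisfies the Euler-Lagrange system
$$\int_{S^n} |\xi-\eta|^\lambda G_*(\eta)\,d\eta = \mu F_*(\xi)^{p-1},\qquad \int_{S^n} |\xi-\eta|^\lambda F_*(\xi)\,d\xi = \mu G_*(\eta)^{p-1},$$
from which strict positivity of $F_*, G_*$ is immediate. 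The final classification step — that every positive solution must be constant — is accomplished by pulling back to $\real^n$ and applying the method of moving spheres (equivalently moving planes in integral form) to the equation $u(x)^{p-1} = c\int_{\real^n}|x-y|^\lambda u(y)\,dy$; this forces $u$ to have the form $A(a+b|x-x_0|^2)^{-n/p}$, which via a conformal automorphism reduces to constants on $S^n$.

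The main obstacle is precisely this classification step: moving spheres for integral equations with the nonlinearity $u^{p-1}$ in the range $0 < p < 1$ is delicate because $u^{p-1}$ is singular where $u$ is small, and the usual maximum-principle framework must be recast entirely in integral form using the positivity of $|x-y|^\lambda$ together with sharp decay estimates at infinity. Once classification is secured, the sharp constant is computed by inserting $F = G \equiv 1$ and using the rotational-invariance identity
$$\int_{S^n}|\xi_0-\eta|^\lambda\,d\eta = 2^\lambda\,\frac{\Gamma(n)}{\Gamma(n/2)}\,\frac{\Gamma((n+\lambda)/2)}{\Gamma(n+\lambda/2)},$$
which is the analytic continuation to positive exponent of the formula used in the proof of Lemma~\ref{lem:AS}. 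Tracing through the constants from the stereographic reduction then produces the claimed
$$A_\lambda = \pi^{\lambda/2}\,\frac{\Gamma((n+\lambda)/2)}{\Gamma(n+\lambda/2)}\left[\frac{\Gamma(n)}{\Gamma(n/2)}\right]^{1+\lambda/n}.$$
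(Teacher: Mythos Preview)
Your stereographic reduction to $S^n$ and the computation of $B_\lambda$ agree with the paper, and your classification strategy via moving spheres on the Euler--Lagrange system is a genuinely different route from the paper's, which instead uses layered symmetrization: radial rearrangement on $\real^n$, then rearrangement on the multiplicative group $\real_+$ to impose inversion symmetry, and finally two-point symmetrization on $S^n$. The combination of ``monotone in the polar angle'' and ``symmetric about the equator'' forces constants directly, with no PDE analysis. Moving spheres is a legitimate alternative and has been carried out in the literature for the reverse HLS, but it trades the elementary rearrangement steps for the integral-equation maximum-principle machinery you flag as delicate.

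The real gap in your proposal, however, is the existence step. Your claim that ``standard compactness yields a nonnegative extremal pair'' and that ``the case $0<p<1$ is actually favorable here because mass can neither concentrate nor escape'' overlooks the conformal non-compactness. The M\"obius group of $S^n$ acts on the functional and on the constraint $\|F\|_p=1$ as a symmetry, so minimizing sequences can concentrate at a point; the scaling computation shows the functional is invariant along such a degeneration, not coercive against it. Compactness of $S^n$ alone does not rescue this. The paper handles existence by first symmetrizing, which breaks the conformal group, and then passing to the logarithmic variable on $\real$ where the symmetrized competitors are uniformly bounded by $(1+|t|)^{-1/p}$ and in fact decay like $e^{-\lambda|t|/2}$; Helly selection plus dominated convergence then gives an extremizer with unit norm. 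If you want to avoid symmetrization entirely, you need an independent concentration-compactness argument before you can write down the Euler--Lagrange system.
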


\begin{proof}
Since $|x|^\lambda$ is a radial increasing function, apply symmetrization to obtaion 
\begin{equation*}
\int_{\real^n\times\real^n} f(x) |x-y|^\lambda g(y)\, dx\,dy 
\ge \int_{\real^n\times\real^n} f^* (x) |x-y|^\lambda g^* (y)\,dx\,dy
\end{equation*}
where $f^*,g^*$ denote the equimeasurable radial decreasing rearrangements of $f,g$.
The next step is to reduce the problem to the multiplicative group $\real_+$ or equivalently 
the line $\real$. 
Set $h(u) = |x|^{n/p} f^* (x)$, $k(v) = |y|^{n/p} g^* (y)$ where $u = |x|$, $v= |y|$; then 
the inequality becomes
\begin{equation}\label{eq:reverse HLS-2}
\int_{\real_+ \times\real_+} h(u) k(v) \int_{S^{n-1}\times S^{n-1}} \left[ \frac{u}v + \frac{v}u -
2\xi \cdot \eta\right]^{\lambda/2} \,d\xi\,d\eta\, \frac{du}u \frac{dv}v  
\ge B_\lambda \|h\|_{L^p (\real_+)} \|k\|_{L^p (\real_+)} \ .
\end{equation}
Observe that $h,k$ are bounded so that $h, k\in L^p (\real_+) \cap L^\infty (\real_+)$.
The ``potential'' is now symmetric increasing away from the origin $\{ u=1\}$ on $\real_+$ 
so symmetrization will improve the inequality by diminishing the left-hand side so that 
$h(1/u) = h(u)$ is monotone decreasing for $u>1$ (similarly for $k$). 
This step then implies that inequality \eqref{eq:reverse HLS} is improved if 
(1)~$f$ is radial decreasing, 
(2)~$|x|^{n/p} f(x)$ is decreasing for $|x|>1$, and 
(3)~$f(|x|^{-1}) = |x|^{2n/p} f(|x|)$, all for nonnegative $f$.
These conditions are precisely what is meant by saying that $f$ and $g$ possess 
``inversion symmetry''. 
Set $u = e^t$ and $v = e^s$ so that the working inequality becomes 
\begin{equation}\label{eq:working inequality} 
\int_{\real\times\real} h(t) k(s) \int_{S^{n-1} \times S^{n-1}} 
\big[ \cosh (t-s) - \xi \cdot \eta\big]^{\lambda/2} \, d\xi\, d\eta\, dt\,ds 
\ge C\|h\|_{L^p (\real)} \|k\|_{L^p (\real)}
\end{equation} 
Normalize this expression by setting $\|h\|_{L^p (\real)} = \|k\|_{L^p (\real)}=1$; let 
\begin{equation*} 
\int_{S^{n-1} \times S^{n-1}} \big[ \cosh (t) - \xi\cdot \eta\big]^{\lambda/2}\,d\xi\,d\eta
= J_N (t) + K_N (t)
\end{equation*}
where $J_N$ is supported on $\{ |t| <N\}$ and $K_N $ is supported on $\{|t| \ge N\}$. 
First, show that for $\|h\|_p = \|k\|_p =1$
\begin{equation*}
\inf \bigg[ \int_{\real\times\real} h(t) k(s) \int_{S^{n-1}\times S^{n-1}} \big[ \cosh (t-s) -\xi \cdot\eta
\big]^{\lambda/2} \,d\xi\,d\eta \,dt\, ds\bigg] = C >0\ .
\end{equation*}
This fact follows from the reverse Young's inequality where 
\begin{equation*}
\int_{\real\times\real} h(t) k(s) K_1 (t-s) \,dt\,ds 
\ge \|K_1 \|_{L^{p'/2} (\real)} >0
\end{equation*}
so the positive infimum $C$ exists. 
The second objective is to show the existence of extremals where the infimum is attained. 
Consider sequences $\{ h_m\}$, $\{k_m\}$ with $\|h_m\|_p = \|k_m\| = 1$ so that 
\begin{equation*}
\Lambda_m = \int_{\real\times\real} h_m (t) k_m (s) 
\int_{S^{n-1}\times S^{n-1}} \big[ \cosh (t-s) - \xi \cdot \eta\big]^{\lambda/2} 
\, d\xi \, d\eta\, dt\, dt \xrightarrow[m\to\infty]{}  C
\end{equation*}
and $2C >\lambda_m \ge C$; the functions $h_m$, $k_m$ will be symmetric decreasing 
and uniformly bounded by a multiple of $(1+ |t|)^{-1/p}$ where $0 < p < 1$ so that they 
have a uniform $L^1 (\real)$ majorant. 
Since the functions are decreasing, the Helly selection principle can be applied to choose 
subsequences that converge almost everywhere to functions $h$ and $k$ with 
$\|h\|_p \le 1$, $\|k\|_p \le 1$. 
To simplify notation, the pointwise convergent sequences are now substituted in place 
of the original sequences. 
By Fatou's lemma 
\begin{equation*}
\Lambda_x = \int_{\real\times\real} h(t) k(s) \int_{S^{n-1}\times S^{n-1}} 
\big[ \cosh (t-s) - \xi\cdot\eta\big]^{\lambda/2}\, d\xi\, d\eta\, dt\,ds 
\le \lim \Lambda_m = C
\end{equation*}
Past arguments have used a uniform majorant for the sequential  functions to show that 
limit and integral can be interchanged for the ``potential functional'' (see discussion on 
page~40 in \cite{Beckner-Princeton}, and the proof of Theorem~15 on the multilinear 
Hardy-Littlewood-Sobolev inequality in \cite{Beckner-2013}). 
A contrasting strategy is utilized here where control by the ``potential functional'' 
combined with the monotonicity of the functions and the uniform $L^1$ majorants 
will show that 
\begin{equation*}
\int_\real h_m (t)\ e^{\lambda/2|t|} \,dt < D_1\ ,\qquad 
h_m (t) \le D_2 \ e^{-\lambda/2|t|}
\end{equation*}
and similarly for $\{k_m\}$ which gives
\begin{equation*}
1 = \lim \int |h_m|^p \, dt  = \int |h|^p\, dt\ ,\qquad 
1 = \lim \int |k_m|^p \, dt  = \int |k|^p\,dt \ .
\end{equation*}
Since $h$ and $k$ have unit norms, $\Lambda_* = C = \text{infimum}$ and $h,k$ are 
extremal functions for the reverse Hardy-Littlewood-Sobolev inequality. 
In returning to the $\real^n$ setting, there will be extremal functions $f,g$ with 
inversion symmetry. 
The conformal structure of the Hardy-Littlewood-Sobolev functional will determine allowed 
forms for the extremal functions from which the constant $A_\lambda$ in 
equation~\eqref{eq:reverse HLS} can be calculated. 
Since the functional is bilinear, the most direct approach is use conformal symmetry on the 
sphere $S^n$. 
For $\xi,\eta \in S^n$, let $f(x) = (1+ |x|^2)^{-n/p} F(\xi)$ and $g(y) = (1+ |y|^2)^{-n/p} G(\eta)$; 
then 
\begin{gather*} 
\int_{\real^n\times\real^n} |x-y|^\lambda f(x) g(y) \,dx\,dy 
= C_\lambda \int_{S^n\times S^n} |\xi-\eta|^\lambda F(\xi) G(\eta)\,d\xi\, d\eta\\
\noalign{\vskip6pt} 
C_\lambda = 2^{-\lambda}\ \pi^n \Big[ \Gamma (n/2)\big/ \Gamma (n)\Big]^2
\end{gather*}
where $d\xi,d\eta$ denote normalized surface measure on $S^n$ with the map from 
$\real^n$ to $S^n$ defined by 
\begin{gather*}
\xi = \left( \frac{2x}{1+|x|^2}, \frac{1-|x|^2}{1+|x|^2}\right)\ ,\qquad 
d\xi = \pi^{-n/2} \Big[ \Gamma (n) \big/ \Gamma (n/2)\Big] \big(1+|x|^2\big)^{-n} \,dx\\
\noalign{\vskip6pt} 
|x-y| = \frac12 |\xi-\eta] \ \Big[ \big(1+|x|^2\big)\big(1+|y|^2\big)\Big]^{1/2}
\end{gather*}
Then inequality \eqref{eq:reverse HLS} has an equivalent formulation on the $n$-dimensional 
sphere:
\begin{gather}
\int_{S^n\times S^n} F(\xi) |\xi-\eta|^\lambda G(\eta) \,d\xi\, d\eta
\ge B_\lambda \|F\|_{L^p (S^n)}\|G\|_{L^p (S^n)} \label{eq:working inequality 2}\\
\noalign{\vskip6pt}
B_\lambda = \int_{S^n} |\xi-\eta|^\lambda\,d\xi 
= 2^\lambda\ \frac{\Gamma (n)}{\Gamma (\frac{n}2)} \ 
\frac{\Gamma (\frac{n+\lambda}2)}{\Gamma (n+ \frac{\lambda}2)} \nonumber
\end{gather} 
Because only two functions are involved, two-point symmetrization can be used to show 
that the inequality must be improved by rearranged functions that depend only on the 
polar angle and are decreasing away from a pole. 
But the inequality cannot be improved so the extremal functions at this stage must 
combine two properties: 
a)~monotone decreasing away from a pile; 
b)~possess ``inversion symmetry'' which on the sphere means that functions are 
symmetric with respect to an equator. 
Then up to conformal automorphism, the only possible extremals on $S^n$ are constant. 
And this remark completes the proof of Theorem~\ref{thm:reverse HLS}. 
For the dimension $n$ at least two, an alternative determination of the form of the extremals
can be obtained using the hyperbolic symmetry of the Hardy-Littlewood-Sobolev functional. 
For $2 \le \ell \le n$ with the Poincar\'e distance and left-invariant Haar measure on 
$\HH^\ell$ for $w = (x,y) \in \real^{\ell-1} \times \real_+$
\begin{equation*} 
d(w,w') = \frac{|w-w'|}{2\sqrt{yy'}}\ ,\qquad 
d\nu = y^{-\ell} \,dy\,dx 
\end{equation*}
an inequality equivalent to \eqref{eq:reverse HLS} is given by 
\begin{equation} \label{eq:Poincare}
\begin{split}
\int_{\HH^\ell \times \HH^\ell} 
 F(w) G(w')   &\int_{S^{n-\ell} \times S^{n-\ell}} 
\Big[ d^2 (w,w') + (1-\xi\cdot \xi')\big/ 2\Big]^{\lambda/2} \, d\xi\, d\xi'\,d\nu\, d\nu' \\
\noalign{\vskip6pt}
&\qquad 
\ge D_\lambda \|F\|_{L^p (\HH^\ell)} \|G\|_{L^p (\HH^\ell)}
\end{split}
\end{equation}
The constraint of possessing radial symmetry on $\real^n$ in \eqref{eq:reverse HLS} 
and geodesic radial symmetry on $\HH^\ell$ in \eqref{eq:Poincare} will determine 
the form of the extremals (see the argument given in \cite{Beckner-JFAA97} concerning 
``axial symmetry and $SL(2,R)$'' and the proof of Theorem~15 in \cite{Beckner-2013} 
for the multilinear Hardy-Littlewood-Sobolev inequality. 
\end{proof}

\section*{Acknowledgement}

I would like to thank Guozhen Lu for his warm encouragement, and to acknowledge that 
my interest in mathematical questions that lie on the boundary with describing the 
behavior of dynamical phenomena has been encouraged in varied ways by Nestor Guillen, 
Emanuel Carneiro, Luis Caffarelli and Alessio Figalli.


\end{document}